\newtheorem{defn}{Definition}
\newtheorem{theorem}{Theorem}
\newtheorem{lem}{Lemma}
\newtheorem{prop}{Proposition}
\newcommand*{\dif}{\@ifnextchar^{\DIfF}{\DIfF^{}}}
\def\DIfF^#1{\mathop{\mathrm{\mathstrut d}}\nolimits^{#1}\gobblesp@ce}
\def\gobblesp@ce{\futurelet\diffarg\opsp@ce}
\def\opsp@ce{%
  \let\DiffSpace\!%
  \ifx\diffarg(%
    \let\DiffSpace\relax
  \else
    \ifx\diffarg[%
      \let\DiffSpace\relax
    \else
      \ifx\diffarg\{%
        \let\DiffSpace\relax
      \fi\fi\fi\DiffSpace}
\title[Large Time Step TVD Schemes]
{Large Time Step TVD Schemes for  \\ Hyperbolic Conservation Laws}
\author[Lindqvist, Aursand, Fl\aa{}tten and Solberg]
{Sofia Lindqvist$^{\text{A}}$, Peder Aursand$^{\text{B}}$, Tore Fl\aa{}tten$^{\text{C,E}}$ and Anders Aase Solberg$^{\text{D}}$}
\thanks{$^\text{A}$Dept.\ of Mathematical Sciences, Norwegian University of Science and Technology (NTNU), NO-7491 Trondheim, Norway.}
\thanks{$^\text{B}$SINTEF Energy Research, P.\ O.\ Box 4761 Sluppen, NO-7465 Trondheim, Norway.}
\thanks{$^\text{C}$SINTEF Materials and Chemistry, P.\ O.\ Box 4760 Sluppen, NO-7465 Trondheim, Norway.}
\thanks{$^\text{D}$Dept.\ of Energy and Process Engineering, Norwegian University of Science and Technology (NTNU), NO-7491 Trondheim, Norway.}
\thanks{Email: lindqvist.sofia@gmail.com, Peder.Aursand@sintef.no, Tore.Flatten@sintef.no, \\ andesolb@stud.ntnu.no}
\thanks{$^\text{E}$Corresponding author}
\date{\today}
\begin{document}

\begin{abstract}

Large time step explicit schemes in the form originally proposed by LeVeque  [{\em Comm. Pure Appl. Math.}, 37 (1984), pp.\ 463--477] have seen a significant revival in recent years. In this paper we consider a general framework of {local} $2k+1$ point schemes containing LeVeque's scheme (denoted as {LTS-Godunov}) as a member. A modified equation analysis allows us to interpret each numerical cell interface coefficient of the framework as a {\em partial numerical viscosity} coefficient. 

We identify the least and most diffusive TVD schemes in this framework. The most diffusive scheme is the $2k+1$-point Lax-Friedrichs scheme (LTS-LxF). The least diffusive scheme is the Large Time Step scheme of LeVeque based on Roe upwinding (LTS-Roe). Herein, we prove a generalization of Harten's lemma: all partial numerical viscosity coefficients of any local unconditionally TVD scheme are bounded by the values of the corresponding coefficients of the LTS-Roe and LTS-LxF schemes.

We discuss the nature of entropy violations associated with the LTS-Roe scheme, in particular we extend the notion of {\em transonic rarefactions} to the LTS framework. We provide explicit inequalities relating the numerical viscosities of LTS-Roe and LTS-Godunov across such generalized transonic rarefactions, and discuss numerical entropy fixes. 

Finally, we propose a one-parameter family of Large Time Step TVD schemes spanning the entire range of the admissible total numerical viscosity. Extensions to nonlinear systems are obtained through the Roe linearization. The 1D Burgers equation and the Euler system are used as numerical illustrations.

\end{abstract}

\maketitle

\subsection*{subject classification}
 {65M08, 35L65, 65Y20}
\subsection*{key words}
{hyperbolic conservation laws, large time step, total variation diminishing}

\section{Introduction}

We are concerned with the numerical solution of hyperbolic conservation laws, i.e. initial value problems in the form
\begin{subequations}\label{eq:scalarcons}
\begin{gather}
  u_t+f(u)_x=0, \quad u\in \mathcal{W}\subseteq \mathbb{R}^N, \\
  u(x,0)=u_0(x)
\end{gather}
\end{subequations}
where the Jacobian $f_u$ has real eigenvalues and a complete set of eigenvectors.

Assuming a discretization where
\begin{subequations}
\begin{gather}
  x=j\Delta x, \\
  t=n\Delta t,
\end{gather}
\end{subequations}
we consider the $(2k+1)$ point finite volume approximation 
\begin{subequations}\label{eq:FV}
\begin{equation}\label{eq:FV1}
  U^{n+1}_j=H(U_{j-k}^n,\ldots,U_{j+k}^n)=U_j^n-\frac{\Delta t}{\Delta x}\left(F_{j+1/2}^n-F_{j-1/2}^n\right),
\end{equation}
where
\begin{equation}\label{eq:FV2}
  F_{j+1/2}=F(U_{j-k+1}^n,\ldots,U_{j+k}^n).  
\end{equation}
\end{subequations}
The numerical flux function $F$ is assumed to be Lipschitz continuous and consistent in the sense that
\begin{equation}\label{eq:consistent}
  F(U,\ldots,U)=f(U).  
\end{equation}
If we assume compact support or periodic boundary conditions, weak solutions to the {\em scalar} initial value problem \eqref{eq:scalarcons} possess the following monotonicity properties~\cite{har83}:
\begin{enumerate}
  \item No new local extrema in $x$ may be created;
  \item The value of a local minimum is nondecreasing, the value of a local maximum is nonincreasing.
\end{enumerate}
As a consequence, we have that
\begin{equation}\label{eq:contTVD}
  \frac{\dif}{\dif t}\int\left|\frac{\partial u}{\partial x}\right|\dif x\leq 0.  \end{equation}

In 1983, Harten~\cite{har83} introduced the now classical concept of TVD (total-variation-diminishing) numerical schemes; i.e.\ schemes that respect \eqref{eq:contTVD} on the discrete level. The discrete {\em Total Variation} (TV) at time step $n$ is defined as
\begin{subequations}\label{eq:defTVD}
\begin{equation}
  \mathrm{TV}^n=\sum_j\left|U_{j+1}^n-U_j^n\right|.  
\end{equation}
A numerical scheme for \eqref{eq:scalarcons} is said to be {\em Total Variation Diminishing} (TVD) if
\begin{equation}
  \mathrm{TV}^{n+1}\leq\mathrm{TV}^n.  
\end{equation}
\end{subequations}
The introduction of this strong nonlinear stability condition spawned an intensive research activity in the eighties. In particular, the TVD framework provided a convenient setting for generalizing the MUSCL approach of van Leer~\cite{lee79} for obtaining {\em high resolution} schemes, i.e.\ schemes that achieve second-order accuracy in smooth regions while preserving the robustness of first-order schemes near extrema and discontinuities~\cite{har83}.

It follows from the Lax-Wendroff theorem~\cite{lax60} that TVD schemes in the form \eqref{eq:FV} will converge to a weak solution, but not necessarily the correct entropy solution~\cite{har76,tad84}, of the scalar initial value problem \eqref{eq:scalarcons}.

\subsection{Explicit vs implicit schemes}

Explicit schemes in the form \eqref{eq:FV} suffer from the well-known restriction on the time step denoted as the {\em Courant-Friedrichs-Lewy} (CFL) condition~\cite{cou67}. In particular, a {\em necessary} condition for stability is
\begin{equation}\label{eq:genCFL}
  \bar{C}=\frac{\Delta t}{\Delta x}\left|\max\limits_i\lambda_i\right|\leq k,  
\end{equation}
where the dimensionless time parameter $\bar{C}$ is called the {\em Courant number} and $\lambda_i$ are the eigenvalues of $f_u$.

There is an intricate relationship between {\em stability} and {\em accuracy} of numerical methods for hyperbolic conservation laws. Nonlinear equations will develop discontinuities in finite time, causing oscillations for high-order methods due to the Gibbs phenomenon. In this respect, most studies on $2k+1$ point schemes have focused on employing extended stencils to improve {accuracy} rather than extending {stability}~\cite{har83,har97,jia96}, and typically the CFL condition will reduce to
\begin{equation}\label{eq:strictCFL}
  \bar{C}\leq 1
\end{equation}
regardless of the size of the stencil. Classically, the strategy of choice to overcome \eqref{eq:strictCFL} has been {\em implicit schemes}~\cite{har84}, i.e. methods where the numerical flux \eqref{eq:FV2} is evaluated at time level $n+1$. In particular, if \eqref{eq:FV2} is modified as
\begin{equation}\label{eq:impf}
  F_{j+1/2}=F(U_{j-k+1}^{n+1},\ldots,U_{j+k}^{n+1}),
\end{equation}
then \eqref{eq:FV} is said to be an {\em implicit} scheme.

Harten~\cite{har84} provided TVD conditions for hybrid explicit-implicit schemes. In particular, implicit schemes can be TVD without any restriction on the time step, at the cost of excessive numerical diffusion.

\subsection{Large Time Step explicit schemes}

The observations
\begin{itemize}
  \item explicit schemes are commonly limited by the strict time step restriction~\eqref{eq:strictCFL};
  \item implicit schemes suffer from excessive numerical diffusion;
\end{itemize}
motivate the study of multi-point schemes in the form \eqref{eq:FV}, with the aim of obtaining the relaxed time step restriction \eqref{eq:genCFL}. For the purposes of this paper, we will refer to such schemes as {\em Large Time Step} (LTS) schemes.

Theoretical foundations for this approach were established several decades ago~\cite{har76,jam86,jam87,osh86}. The first investigations of concrete Large Time Step explicit schemes were performed in a series of papers by LeVeque~\cite{lev82,lev84,lev85}. Herein, he proposed a generalization of the classical Godunov scheme based on treating each wave interaction as linear. This procedure leads to a consistent and conservative method satisfying the TVD property~\cite{lev84}. Extensions to systems were considered in~\cite{lev85}. LeVeque observed that his scheme would often be highly accurate, but could suffer from artificial staircase solutions and lack of pointwise convergence for systems~\cite{lev85}.

In~\cite{har86}, Harten proposed a more diffusive (and robust) Large Time Step TVD scheme. For linear equations, his scheme reduces to a repeated application of the standard upwind scheme. Through numerical examples, Harten demonstrated how his modified flux approach (introduced in~\cite{har83}) could be extended as a general method to achieve high resolution TVD LTS schemes.

Nevertheless, the TVD condition is an excessively strict stability criterion. In general, better accuracy can be achieved from the less restrictive ENO~\cite{har97} and WENO~\cite{jia96} schemes which allow spurious oscillations on the level of the truncation error. Goodman and LeVeque~\cite{goo85} also showed that in 2 space dimensions, any TVD scheme is generally at most first-order accurate. Consequently, there was a decline of interest in schemes formulated in the TVD setting, including the LTS schemes.

However, even with the present significant advances in computing power, there is still a need to sacrifice accuracy for efficiency in many practical applications. For instance, simulators for petroleum processing facilities often involve the solution of highly complicated multiphase flow models through several kilometer long pipelines~\cite{dan11}. These simulators are typically based on implicit methods, but we may note that explicit methods in the form \eqref{eq:FV} are trivially parallelizable whereas implicit methods are not~\cite{nor11}. The current trend of moving towards massively parallel computer hardware then provides motivation for revisiting the Large Time Step class of schemes, and indeed these methods have seen a {\em significant revival} in recent years.

Murillo, Morales-Hern\'{a}ndez and co-workers~\cite{mor12a,mor12,mor14,mor13,mur06} have investigated variations of LeVeque's LTS scheme, based on Roe upwinding, for the 1D and 2D shallow water equations. Herein, an emphasis was made on the proper treatment of source terms and boundary conditions.

Qian and Lee~\cite{qia11} successfully extended LeVeque's LTS-Godunov scheme to the full 3D Euler equations through dimensional splitting. In~\cite{qia12}, these authors extended their investigation to some high-resolution LTS schemes constructed through Harten's~\cite{har86} approach. 

Xu et al.~\cite{xu14} applied the LTS-Godunov scheme to the 1D shallow water equations. Recently, Thompson and Moeller~\cite{tho15} independently rediscovered the LTS-Roe algorithm and applied it to Maxwell's equations.

All these authors follow LeVeque~\cite{lev82,lev84,lev85} in conceptually formulating their schemes as {\em wave propagation methods}, and they all observe the following drawbacks:
\begin{itemize}
  \item[D1:] {\em For systems, oscillations tend to occur for large time steps due to the nonlinear wave interactions}.
  \item[D2:] {\em Propagating rarefactions as expansion shocks introduces entropy-violating solutions}.
\end{itemize}
For convex scalar equations, the rigorous LTS-Godunov scheme proposed by LeVeque~\cite{lev84} was proved to converge to the correct entropy solution in~\cite{wan04}. This scheme involves explicitly tracking the spreading of each rarefaction wave across several computational cells, whereas the wave propagation algorithm is most naturally formulated in terms of isolated discontinuities. For this reason, most authors address the issue D2 by resolving rarefactions into sufficiently many individual expansion shocks~\cite{lev82,mor12,qia11,xu14}.

For the classical essentially 3-point explicit schemes, we note that both spurious oscillations and entropy violations are typically handled by applying a controlled amount of numerical diffusion~\cite{evj03,tad84}. For Large Time Step schemes, Xu et al.~\cite{xu14} remark that the special updating procedure of wave propagation methods prohibits a natural introduction of numerical viscosity. This remark motivates our current paper.

\subsection{Outline of the paper}

The primary aim of our paper is to describe a general LTS framework where TVD schemes are classified according to their numerical viscosity, and herein explicitly place the common wave propagation schemes of LeVeque~\cite{lev82,lev84,lev85}. By this we hope to lay a foundation for further systematic development and improvement of explicit LTS schemes.

Our paper is organized as follows. In Section~\ref{sec:multi}, we introduce the concept of {\em local} multi-point schemes and provide the transformation between the viscosity formulation and the flux-difference-splitting formulation of such schemes. Through a modified equation analysis, we obtain a useful expression for the total numerical viscosity inherent in the schemes. We present closed form expressions for the numerical viscosity coefficients of the LTS-Godunov scheme.

In Section~\ref{sec:TVD}, we investigate the TVD conditions for  LTS schemes. In particular, we identify the least and most diffusive of the TVD local schemes. The least diffusive scheme is precisely the LTS scheme of LeVeque where Roe upwinding is used in place of the rigorous Godunov solver. The most diffusive scheme can be viewed as a natural extension to Large Time Steps of the classical Lax-Friedrichs scheme. We discuss the precise conditions under which LTS-Godunov contains more numerical viscosity than LTS-Roe, and how this difference can lead to entropy violations for the LTS-Roe scheme.

In Section~\ref{sec:systems}, we propose a natural one-parameter family of local LTS-TVD schemes that spans the entire admissible range of numerical viscosity. We extend our framework to general nonlinear systems of conservation laws through the Roe linearization.

In Section~\ref{sec:simulations}, we illustrate our concepts through numerical simulations of the Burgers equation and the Euler system. Our main findings are summarized in Section~\ref{sec:summary}.

\section{Multi-point methods}\label{sec:multi}

For scalar equations, standard 3-point schemes in the form \eqref{eq:FV} can be parametrized by a single numerical viscosity coefficient $Q_{j+1/2}^n$ through the formulation~\cite{har83,tad84}:
\begin{equation}\label{eq:3visc}
  F_{j+1/2}=\frac{1}{2}\left(f(U_j^n)+f(U_{j+1}^n)\right)-\frac{1}{2}\frac{\Delta x}{\Delta t}Q_{j+1/2}^n\left(U_{j+1}^n-U_j^n\right),
\end{equation}
which we will denote as the {\em viscosity formulation}. Alternatively, me may split the flux difference in \eqref{eq:FV1} into left-going and right-going fluctuations as
\begin{equation}\label{eq:fluct}
  F_{j+1/2}-F_{j-1/2}=\mathcal{A}^+_{j-1/2}\left(U_j^n-U_{j-1}^n\right)+\mathcal{A}_{j+1/2}^-\left(U_{j+1}^n-U_j^n\right),
\end{equation}
commonly denoted as the {\em flux-difference splitting} formulation~\cite{lev02}. The natural $2k+1$ point extensions of these concepts may be formulated as follows, where for convenience we drop the temporal index $n$. Instead we use upper indices as relative cell interface indicators.
\begin{itemize}
  \item {\em Viscosity formulation}:
\begin{multline}\label{eq:viscform}
  F_{j+1/2}=\frac{1}{2}\left(f_j+f_{j+1}\right)-\frac{1}{2}\frac{\Delta x}{\Delta t}Q_{j+1/2}^0\Delta_{j+1/2} \\ -\frac{\Delta x}{\Delta t}\sum_{i=1}^{\infty}\left(Q^{i-}_{j+1/2-i}\Delta_{j+1/2-i}+Q^{i+}_{j+1/2+i}\Delta_{j+1/2+i}\right).
\end{multline}
  \item {\em Flux-difference splitting (FDS) formulation:}~\cite{jam86}
\begin{equation}\label{eq:fdsform}
  U_j^{n+1}=U_j^n-\frac{\Delta t}{\Delta x}\sum_{i=0}^{\infty}\left(\mathcal{A}^{i+}_{j-1/2-i}\Delta_{j-1/2-i}+\mathcal{A}^{i-}_{j+1/2+i}\Delta_{j+1/2+i}\right).
\end{equation}
\end{itemize}
Herein, we have introduced the shorthands
\begin{subequations}
\begin{gather}
  f_j=f(U_j), \\
  \Delta_{j+1/2}=U_{j+1}-U_j
\end{gather}
\end{subequations}
and the convention
\begin{equation}\label{eq:zeroconv}
  Q^{i\pm}=\mathcal{A}^{i\pm}=0\qquad\text{for}\qquad i\geq k.
\end{equation}
In this paper, we will have a particular focus on {\em local} multi-point schemes which we define as follows.
\begin{defn}\label{def:local}
In the formulation \eqref{eq:viscform}, assume that all numerical viscosity coefficients are bounded and satisfy
\begin{equation}
  Q^{i\pm}_{j+1/2}=Q^{i\pm}(U_j,U_{j+1})\quad\forall i\in\mathbb{N}^0,j\in\mathbb{Z}.
\end{equation}
Likewise, in the formulation \eqref{eq:fdsform} assume that all $\mathcal{A}$ are bounded and satisfy
\begin{equation}
  \mathcal{A}^{i\pm}_{j+1/2}=\mathcal{A}^{i\pm}(U_j,U_{j+1})\quad\forall i\in\mathbb{N}^0,j\in\mathbb{Z}.
\end{equation}
Such schemes will be termed {\bf local} schemes.
\end{defn}

\begin{prop}
For scalar equations, the coefficients $\mathcal{A}$ of local flux-difference splitting schemes~\eqref{eq:fdsform} are uniquely determined.
\end{prop}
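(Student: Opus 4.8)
The plan is to read ``uniquely determined'' as follows: the scheme \eqref{eq:fdsform} defines, for each cell, a map from the stencil data $(U_{j-k},\dots,U_{j+k})$ to the updated value $U_j^{n+1}$, and the claim is that this map already fixes every local coefficient $\mathcal{A}^{i\pm}$. Equivalently, two local FDS schemes that produce the same $U_j^{n+1}$ for all admissible data must have identical coefficients. First I would record that, by the convention \eqref{eq:zeroconv}, each sum in \eqref{eq:fdsform} is in fact finite, running only over $0\le i\le k-1$, so that only finitely many coefficients are in play and all manipulations are legitimate. The idea is then to \emph{probe} the scheme with single-jump (Riemann) data and let the structure of \eqref{eq:fdsform} isolate one coefficient at a time.

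Concretely, fix an arbitrary ordered pair $(U_L,U_R)$ with $U_L\neq U_R$ and take $U_m=U_L$ for $m\le 0$ and $U_m=U_R$ for $m\ge 1$, so that $\Delta_{m+1/2}=0$ for every interface except $\Delta_{1/2}=U_R-U_L$. Substituting this data into \eqref{eq:fdsform} and tracking indices, the contribution of the single active interface $1/2$ to cell $j$ survives only through the right-going term when $j-1/2-i=1/2$ (forcing $i=j-1\ge 0$, i.e.\ $j\ge 1$) or through the left-going term when $j+1/2+i=1/2$ (forcing $i=-j\ge 0$, i.e.\ $j\le 0$). These two cases are disjoint, so each affected cell receives exactly one fluctuation, and the update collapses to
\begin{equation}
U_j^{n+1}=U_j^n-\frac{\Delta t}{\Delta x}\,\mathcal{A}^{(j-1)+}_{1/2}(U_L,U_R)\,\Delta_{1/2}\quad(j\ge 1),
\end{equation}
with the mirror identity involving $\mathcal{A}^{(-j)-}_{1/2}(U_L,U_R)$ for $j\le 0$. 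Dividing by $\Delta_{1/2}\neq 0$ yields explicit inversion formulas expressing each $\mathcal{A}^{i\pm}_{1/2}(U_L,U_R)$ in terms of the (representation-independent) updated values alone.

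Since the updated values $U_j^{n+1}$ are determined by the scheme and the pair $(U_L,U_R)$ was arbitrary, these formulas pin down every coefficient $\mathcal{A}^{i\pm}(U_L,U_R)$ for all $0\le i\le k-1$; locality (Definition~\ref{def:local}) guarantees that this single computation fixes the coefficient as a \emph{function} on all state pairs. The only case left uncovered is the diagonal $U_L=U_R$, where the probing fails because every $\Delta$ vanishes; here the coefficients multiply a zero jump and hence never influence the scheme, so their values are immaterial, and if a canonical value is desired it is forced by the assumed boundedness together with continuity and consistency \eqref{eq:consistent}. The main obstacle is therefore not analytic but bookkeeping: one must set up the index matching between interfaces, the shift index $i$, and the affected cells carefully enough that the collapse of the double sum to a single term is transparent and the left/right cases are seen to be disjoint. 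Once that bookkeeping is in place the uniqueness is immediate.
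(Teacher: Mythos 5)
Your proof is correct and follows essentially the same route as the paper's: both probe the scheme with single-jump Riemann data as in \eqref{eq:initjump}, observe that the sums in \eqref{eq:fdsform} collapse so that each cell receives exactly one fluctuation (the left-going and right-going cases being disjoint), and divide by the nonzero jump to read off every $\mathcal{A}^{i\pm}(U_{\mathrm{L}},U_{\mathrm{R}})$ from the update map $H$. Your additional remark on the diagonal case $U_{\mathrm{L}}=U_{\mathrm{R}}$, where the coefficients multiply a zero jump and are immaterial to the scheme, is a minor refinement that the paper's proof leaves implicit.
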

\begin{proof}
Assume the initial condition
\begin{subequations}\label{eq:initjump}
\begin{equation}
  U_j^n=\begin{cases} U_{\mathrm{L}} &\text{for}\quad j \leq p \\ U_{\mathrm{R}} &\text{for}\quad j > p \end{cases},
\end{equation}
i.e.
\begin{equation}
  \Delta_{j+1/2+i}^n=\begin{cases} U_{\mathrm{R}}-U_{\mathrm{L}}\neq 0 &\text{for}\quad i+j=p \\ 0 & \text{otherwise.} \end{cases}
\end{equation}
\end{subequations}
Then, by the boundedness of $\mathcal{A}$, we obtain from \eqref{eq:FV} and \eqref{eq:fdsform}:
\begin{equation}
 \frac{\Delta x}{\Delta t}\frac{H(U_{j-k}^n,\ldots,U_{j+k}^n)-U_j^n}{U_{\mathrm{R}}-U_{\mathrm{L}}}=\begin{cases} \mathcal{A}^{i-}(U_{\mathrm{L}},U_{\mathrm{R}}) & \text{for}\quad i=p-j\geq 0 \\ \mathcal{A}^{i+}(U_{\mathrm{L}},U_{\mathrm{R}}) & \text{for}\quad i=j-1-p \geq 0\end{cases}.
\end{equation}
\end{proof}
\begin{prop}
For scalar equations, the coefficients $Q$ of local viscosity schemes~\eqref{eq:viscform} are uniquely determined.
\end{prop}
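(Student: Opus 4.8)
The plan is to follow the strategy of the preceding proposition and test the scheme on the single-jump data \eqref{eq:initjump}, for which the only nonvanishing difference is $\Delta_{p+1/2}=U_{\mathrm{R}}-U_{\mathrm{L}}\neq 0$. Substituting this into the viscosity formulation \eqref{eq:viscform} and using locality together with consistency of $f$, I would first evaluate the numerical flux $F_{j+1/2}$ explicitly. Because only $\Delta_{p+1/2}$ survives, exactly one viscosity term contributes to each flux, and $\tfrac{1}{2}(f_j+f_{j+1})$ collapses to $f(U_{\mathrm{L}})$ for $j<p$, to $\tfrac{1}{2}(f(U_{\mathrm{L}})+f(U_{\mathrm{R}}))$ for $j=p$, and to $f(U_{\mathrm{R}})$ for $j>p$. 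Writing every coefficient at the single active interface $(U_{\mathrm{L}},U_{\mathrm{R}})$, one obtains $F_{j+1/2}=f(U_{\mathrm{L}})-\frac{\Delta x}{\Delta t}Q^{(p-j)+}\Delta_{p+1/2}$ for $j<p$, the central value $F_{p+1/2}=\frac{1}{2}(f(U_{\mathrm{L}})+f(U_{\mathrm{R}}))-\frac{1}{2}\frac{\Delta x}{\Delta t}Q^{0}\Delta_{p+1/2}$, and $F_{j+1/2}=f(U_{\mathrm{R}})-\frac{\Delta x}{\Delta t}Q^{(j-p)-}\Delta_{p+1/2}$ for $j>p$.

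The essential difference from the flux-difference-splitting case is that the $Q$ coefficients enter through the fluxes rather than directly through the update, so forming $F_{j+1/2}-F_{j-1/2}$ produces differences of consecutive viscosity coefficients. Inserting the fluxes above into \eqref{eq:FV1}, I expect for the left region $j\le p-1$ the telescoping relation
\begin{equation}
  \frac{H(U^n_{j-k},\ldots,U^n_{j+k})-U_{\mathrm{L}}}{U_{\mathrm{R}}-U_{\mathrm{L}}}=Q^{(p-j)+}-Q^{(p-j+1)+},
\end{equation}
and, symmetrically, for the right region $j\ge p+2$ a relation expressing $(H-U_{\mathrm{R}})/(U_{\mathrm{R}}-U_{\mathrm{L}})$ as $Q^{(j-p)-}-Q^{(j-1-p)-}$. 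The central cell $j=p$ yields one further relation in which $Q^{0}$ appears together with $Q^{1+}$ and the known flux increment $f(U_{\mathrm{R}})-f(U_{\mathrm{L}})$.

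To finish, I would solve these recursions using the finite-support convention \eqref{eq:zeroconv}: since $Q^{i\pm}=0$ for $i\ge k$, each telescoping sum is finite and can be summed from the outside in, giving every $Q^{i+}$ and $Q^{i-}$ with $i\ge 1$ explicitly as a finite combination of the known update values $H(U^n_{j-k},\ldots,U^n_{j+k})$. With $Q^{1+}$ thus determined, the central-cell relation fixes $Q^{0}$. Since the states $U_{\mathrm{L}}\neq U_{\mathrm{R}}$ are arbitrary, this determines every coefficient function $Q^{i\pm}(\cdot,\cdot)$ off the diagonal from the update map alone (the diagonal $U_{\mathrm{L}}=U_{\mathrm{R}}$ being immaterial, as each coefficient there multiplies a zero difference), which is precisely the asserted uniqueness.

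The main obstacle I anticipate is purely the bookkeeping: correctly identifying which single term survives in each flux, tracking the two index shifts that arise when differencing $F_{j+1/2}-F_{j-1/2}$, and matching the telescoping direction to the boundary behaviour imposed by \eqref{eq:zeroconv}. An alternative would be to invoke the uniqueness of the $\mathcal{A}$ coefficients from the preceding proposition and exhibit an invertible linear map between the two sets of coefficients; however, since that transformation is not yet available at this point, the direct computation on the Riemann data is the cleaner route.
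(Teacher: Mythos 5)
Your proposal is correct and takes essentially the same route as the paper: both test the scheme on the single-jump data \eqref{eq:initjump}, read off the telescoped differences $Q^{i\pm}-Q^{(i+1)\pm}$ in the outer cells together with the two central relations coupling $C$, $Q^0$ and $Q^{1\pm}$, and then close the recursion from the outside in using the convention \eqref{eq:zeroconv} that $Q^{k\pm}=0$. Your explicit flux bookkeeping checks out (indeed, your signed relations are what a direct computation yields, while the paper's display \eqref{eq:viscuniq} appears to carry harmless sign typos), and your remark that the diagonal $U_{\mathrm{L}}=U_{\mathrm{R}}$ is immaterial is a careful point the paper leaves implicit.
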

\begin{proof}
  Consider the initial condition \eqref{eq:initjump}. Then, by the boundedness of $Q$, we obtain from \eqref{eq:FV} and \eqref{eq:viscform}:
\begin{gather}\label{eq:viscuniq}
 \frac{H(U_{j-k}^n,\ldots,U_{j+k}^n)-U_j^n}{U_{\mathrm{R}}-U_{\mathrm{L}}}=\begin{cases} \frac{1}{2}\left(C+Q^0+2Q^{1-}\right)(U_{\mathrm{L}},U_{\mathrm{R}}) & \text{for}\quad j=p \\  \frac{1}{2}\left(C-Q^0+2Q^{1+}\right)(U_{\mathrm{L}},U_{\mathrm{R}}) & \text{for}\quad j=p+1 \\  \left(Q^{i-}-Q^{(i+1)-}\right)(U_{\mathrm{L}},U_{\mathrm{R}}) & \text{for}\quad i=j-p-1\geq 1 \\  \left(Q^{(i+1)+}-Q^{i+}\right)(U_{\mathrm{L}},U_{\mathrm{R}}) & \text{for}\quad i=p-j\geq 1,\end{cases}
\end{gather}
where $C$ is the signed local cell interface Courant number:
\begin{equation}\label{eq:defcou}
  C(U_{\mathrm{L}},U_{\mathrm{R}})=\begin{cases} \frac{\Delta t}{\Delta x}f'(u) & \mathrm{if}\quad U_{\mathrm{L}}=U_{\mathrm{R}}=u, \\ \frac{\Delta t}{\Delta x}\frac{f(U_{\mathrm{R}})-f(U_{\mathrm{L}})}{U_{\mathrm{R}}-U_{\mathrm{L}}}  & \mathrm{otherwise}. \end{cases}
\end{equation}
Now $Q^{k\pm}(U_{\mathrm{L}},U_{\mathrm{R}})=0$ according to \eqref{eq:zeroconv}. Hence \eqref{eq:viscuniq} uniquely determines $Q^{(k-1)\pm}(U_{\mathrm{L}},U_{\mathrm{R}})$. Then, the remaining $Q(U_{\mathrm{L}},U_{\mathrm{R}})$ are uniquely determined by recursion.
\end{proof}

\begin{prop}
  For a given local multi-point scheme, there is a one-to-one mapping between the coefficients $\mathcal{A}$ of \eqref{eq:fdsform} and the coefficients $Q$ of \eqref{eq:viscform} as follows:
\begin{subequations}\label{eq:Q2A}
\begin{align}
  \mathcal{A}^{0+}&=\frac{1}{2}\frac{\Delta x}{\Delta t}\left(C+Q^0-2Q^{1-}\right), \\
  \mathcal{A}^{0-}&=\frac{1}{2}\frac{\Delta x}{\Delta t}\left(C-Q^0+2Q^{1+}\right), \\
  \mathcal{A}^{i+}&=\frac{\Delta x}{\Delta t}\left(Q^{i-}-Q^{(i+1)-}\right)\qquad i\in\{1,\ldots,k-1\}, \\
  \mathcal{A}^{i-}&=\frac{\Delta x}{\Delta t}\left(Q^{(i+1)+}-Q^{i+}\right)\qquad i\in\{1,\ldots,k-1\}
\end{align}
\end{subequations}
and
\begin{subequations}\label{eq:A2Q}
\begin{align}
  Q^{i+}&=-\frac{\Delta t}{\Delta x}\sum_{p=i}^{\infty}\mathcal{A}^{p-}, \\
   Q^0&=\frac{\Delta t}{\Delta x}\sum_{p=0}^{\infty}\left(\mathcal{A}^{p+}-\mathcal{A}^{p-}\right), \\
   Q^{i-}&=\frac{\Delta t}{\Delta x}\sum_{p=i}^{\infty}\mathcal{A}^{p+}.
\end{align}
\end{subequations}
\end{prop}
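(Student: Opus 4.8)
The plan is to derive the forward relations \eqref{eq:Q2A} by substituting the viscosity flux \eqref{eq:viscform} into the update \eqref{eq:FV1} and reading off the coefficient of each difference, and then to obtain \eqref{eq:A2Q} by inverting these relations. Since the update reads $U_j^{n+1}-U_j^n=-\tfrac{\Delta t}{\Delta x}(F_{j+1/2}-F_{j-1/2})$, comparison with the FDS form \eqref{eq:fdsform} shows it suffices to write $F_{j+1/2}-F_{j-1/2}$ as a single linear combination of the differences $\Delta_{j+1/2+m}$, $m\in\mathbb{Z}$, and to match the coefficient of each $\Delta_{j+1/2+m}$ against the corresponding $\mathcal{A}$. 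Equivalently, one may feed the Riemann datum \eqref{eq:initjump} into both formulations exactly as in the two preceding propositions and equate the expressions obtained for $(H-U_j^n)/(U_{\mathrm R}-U_{\mathrm L})$ cell by cell. The locality hypothesis of Definition~\ref{def:local} is what guarantees that each such coefficient is a function of the single interface pair $(U_{\mathrm L},U_{\mathrm R})$, so that the matching is well posed interface by interface.

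First I would dispose of the central flux term. Writing $\tfrac12(f_{j+1}-f_{j-1})=\tfrac12[(f_{j+1}-f_j)+(f_j-f_{j-1})]$ and invoking the definition \eqref{eq:defcou} of the signed interface Courant number, this equals $\tfrac12\tfrac{\Delta x}{\Delta t}(C_{j+1/2}\Delta_{j+1/2}+C_{j-1/2}\Delta_{j-1/2})$, which linearizes the only nonlinear ingredient and re-expresses it through the same differences $\Delta$. Collecting then the coefficient of $\Delta_{j+1/2+m}$ in $F_{j+1/2}-F_{j-1/2}$, the two boundary indices $m=0$ and $m=-1$ receive contributions from the central term, from the $Q^0$ term, and from one shifted viscosity term of $F_{j\mp1/2}$; these produce the first two relations of \eqref{eq:Q2A}. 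For $m\geq1$ and for $m\leq-2$ only two neighbouring viscosity terms survive, giving the telescoping differences $Q^{i-}-Q^{(i+1)-}$ and $Q^{(i+1)+}-Q^{i+}$ of the remaining two relations of \eqref{eq:Q2A}. The bulk of the work here is index bookkeeping: keeping track of which relative interface each superscripted coefficient sits on is the step where errors are easiest to make.

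Finally I would invert. Using the terminal convention \eqref{eq:zeroconv}, the two interior relations of \eqref{eq:Q2A} telescope into the finite sums $Q^{i-}=\tfrac{\Delta t}{\Delta x}\sum_{p\geq i}\mathcal{A}^{p+}$ and $Q^{i+}=-\tfrac{\Delta t}{\Delta x}\sum_{p\geq i}\mathcal{A}^{p-}$, which are the first and third lines of \eqref{eq:A2Q}; the middle line follows by subtracting the second boundary relation of \eqref{eq:Q2A} from the first to isolate $Q^0-Q^{1-}-Q^{1+}$ and then substituting the expressions just found for $Q^{1\pm}$. Because \eqref{eq:Q2A} and \eqref{eq:A2Q} are explicit linear maps that are mutually inverse — and the sums in \eqref{eq:A2Q} are finite by \eqref{eq:zeroconv}, so boundedness is preserved — the correspondence between the admissible families $\{Q\}$ and $\{\mathcal{A}\}$ is one-to-one, as claimed. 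The main obstacle is not any single estimate but the careful index accounting in the coefficient matching and the verification that the telescoping terminates; everything else is routine linear algebra.
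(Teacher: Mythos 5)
Your proposal is correct and follows essentially the same route as the paper: the paper's proof likewise uses the definition \eqref{eq:defcou} to rewrite $\tfrac12(f_{j+1}-f_{j-1})$ in terms of $C_{j\pm1/2}\Delta_{j\pm1/2}$, expresses $F_{j+1/2}-F_{j-1/2}$ as a single linear combination of the $\Delta$'s, and equates coefficients with \eqref{eq:fdsform}. Your explicit telescoping of the interior relations via \eqref{eq:zeroconv} to recover \eqref{eq:A2Q}, and your appeal to locality and the preceding uniqueness propositions to justify the interface-by-interface matching, simply make explicit steps the paper leaves implicit.
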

\begin{proof}
Use the definition~\eqref{eq:defcou} together with~\eqref{eq:viscform} to express the flux difference~\eqref{eq:fluct} as
\begin{multline}\label{eq:fluct2}
  \frac{\Delta t}{\Delta x}\left(F_{j+1/2}-F_{j-1/2}\right)=\frac{1}{2}\left(C_{j+1/2}-Q_{j+1/2}^0+2Q_{j+1/2}^{1+}\right)\Delta_{j+1/2} \\ +\frac{1}{2}\left(C_{j-1/2}+Q_{j-1/2}^0-2Q_{j-1/2}^{1-}\right)\Delta_{j-1/2} \\
  -\sum_{i=1}^{k-1}\left(\left(Q^{(i+1)-}_{j-1/2-i}-Q^{i-}_{j-1/2-i}\right)\Delta_{j-1/2-i}+\left(Q^{i+}_{j+1/2+i}-Q^{(i+1)+}_{j+1/2+i}\right)\Delta_{j+1/2+i}\right)
\end{multline}
and equate the coefficients of \eqref{eq:fdsform} and \eqref{eq:fluct2}.
\end{proof}

\subsection{Modified equation}

These multi-point methods generally yield first order accurate approximations to \eqref{eq:scalarcons}. We now wish to quantify the amount of numerical diffusion in LTS schemes by deriving the convection-diffusion equation for which the methods would be {\em second-order} accurate. Such a modified equation was derived by Harten et al.~\cite{har76}:
\begin{lem}
  A scheme in the form \eqref{eq:FV} approximates to second order the equation
\begin{equation}\label{eq:harmod}
  u_t+f(u)_x=\frac{1}{2}\Delta x\left[\frac{\Delta x}{\Delta t}\left(\sum_{\ell=-k}^k\ell^2\frac{\partial H}{\partial U_{\ell}}(u,\ldots,u)-c^2\right)u_x\right]_x,
\end{equation}
where 
\begin{equation}\label{eq:contcou}
  c=\frac{\Delta t}{\Delta x}f'(u).
\end{equation}
\end{lem}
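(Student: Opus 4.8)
The plan is to substitute a smooth solution $u(x,t)$ into the update \eqref{eq:FV1}, Taylor expand both sides to second order in $\Delta x$ with the ratio $\Delta t/\Delta x$ held fixed, and read off the convection--diffusion equation that renders the residual $O(\Delta x^2)$. Writing $U_\ell = u(x_j+\ell\Delta x,t_n)$ so that $U_\ell-u = \ell\Delta x\,u_x + \tfrac12\ell^2\Delta x^2 u_{xx} + O(\Delta x^3)$, and abbreviating $H_\ell = \partial H/\partial U_\ell$ and $H_{\ell m}=\partial^2 H/\partial U_\ell\partial U_m$ evaluated at the constant state $(u,\dots,u)$, the right-hand side expands as
\begin{equation*}
H(U_{-k},\dots,U_k)-u = \Delta x\,u_x\sum_\ell \ell H_\ell + \tfrac12\Delta x^2\Big(u_{xx}\sum_\ell \ell^2 H_\ell + u_x^2\sum_{\ell,m}\ell m\,H_{\ell m}\Big) + O(\Delta x^3),
\end{equation*}
while the left-hand side is $u(x,t+\Delta t)-u = \Delta t\,u_t + \tfrac12\Delta t^2 u_{tt} + O(\Delta t^3)$.

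The first step is to pin down the moment relations forced by consistency \eqref{eq:consistent} together with the conservation form \eqref{eq:FV1}. Differentiating $H(u,\dots,u)=u$ gives $\sum_\ell H_\ell = 1$. Expressing $H$ through the numerical flux and using that $F_{j+1/2}$ and $F_{j-1/2}$ are the \emph{same} Lipschitz function evaluated on index-shifted arguments, the interface derivatives telescope; combined with $\sum_s\partial_s F(u,\dots,u)=f'(u)$ (from differentiating \eqref{eq:consistent}) this yields $\sum_\ell \ell H_\ell = -c$ with $c$ as in \eqref{eq:contcou}. Matching the $O(\Delta x)$ terms then reproduces $u_t+f(u)_x=0$ to leading order, confirming first-order consistency and fixing the $-c\,\Delta x\,u_x$ contribution.

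Next I would convert the temporal term $\tfrac12\Delta t^2 u_{tt}$ into spatial derivatives. Using the leading-order relation $u_t = -f(u)_x + O(\Delta x)$ one obtains $u_{tt} = \partial_x\big((f')^2 u_x\big) + O(\Delta x)$, and since $(f')^2 = (\Delta x/\Delta t)^2 c^2$ this term becomes $-\tfrac12(\Delta x^2/\Delta t)\partial_x(c^2 u_x)$ after dividing the whole balance by $\Delta t$. Collecting everything gives
\begin{equation*}
u_t+f(u)_x = \tfrac12\frac{\Delta x^2}{\Delta t}\Big[-\partial_x(c^2 u_x) + u_{xx}\sum_\ell \ell^2 H_\ell + u_x^2\sum_{\ell,m}\ell m\,H_{\ell m}\Big] + O(\Delta x^2).
\end{equation*}

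The crux --- and the step I expect to be the main obstacle --- is showing that the two genuinely second-order terms assemble into a single divergence $\partial_x\big[(\sum_\ell\ell^2 H_\ell)u_x\big]$, i.e.\ that the stray $u_x^2$ contribution is precisely the one produced by differentiating the state-dependent coefficient $A(u):=\sum_\ell \ell^2 H_\ell$. Since $\partial_x[A(u)u_x]=A'(u)u_x^2+A(u)u_{xx}$, what is required is the identity $\sum_{\ell,m}\ell m\,H_{\ell m}=A'(u)$. The chain rule gives $A'(u)=\sum_{\ell,m}\ell^2 H_{\ell m}$, so it suffices to prove $\sum_{\ell,m}\ell(\ell-m)H_{\ell m}=0$. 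I would establish this from the conservation form: expressing $H_{\ell m}$ through the second derivatives $\partial_s\partial_t F$ of the numerical flux and carrying out the same index shift between $F_{j+1/2}$ and $F_{j-1/2}$ reduces the sum to $\sum_{s,t}(t-s)\,\partial_s\partial_t F(u,\dots,u)$, which vanishes because the flux Hessian is symmetric. With this identity the bracket collapses to $\partial_x\big[(A(u)-c^2)u_x\big]$, giving exactly \eqref{eq:harmod}. The remaining work is routine Taylor bookkeeping, together with checking that every discarded contribution is genuinely $O(\Delta x^2)$ in the modified equation.
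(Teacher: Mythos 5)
Your proof is correct, but note that the paper itself does not prove this lemma at all: it is quoted directly from Harten, Hyman, Lax and Keyfitz~\cite{har76}, so there is no internal proof to compare against, and your derivation is in substance the classical modified-equation argument of that reference. Your bookkeeping checks out: consistency gives $\sum_\ell H_\ell=1$, and telescoping the index shift between $F_{j+1/2}$ and $F_{j-1/2}$ in the conservation form \eqref{eq:FV1} gives $\sum_\ell \ell H_\ell = -\frac{\Delta t}{\Delta x}\sum_s \partial_s F(u,\ldots,u) = -c$, which both fixes first-order consistency and lets the $\tfrac12\Delta t^2 u_{tt}$ term be rewritten as $-\tfrac12\frac{\Delta x^2}{\Delta t}\partial_x\left(c^2 u_x\right)$ exactly as you do. The step you single out as the crux is also handled correctly: writing $H_{\ell m}=-\frac{\Delta t}{\Delta x}\left(F_{\ell+k,\,m+k}-F_{\ell+k+1,\,m+k+1}\right)$ and shifting indices reduces $\sum_{\ell,m}\ell(\ell-m)H_{\ell m}$ to $-\frac{\Delta t}{\Delta x}\sum_{s,t}(s-t)\,\partial_s\partial_t F(u,\ldots,u)$, which vanishes by symmetry of the flux Hessian, so the stray $u_x^2$ contribution is precisely $A'(u)u_x^2$ and the bracket collapses to the divergence form in \eqref{eq:harmod}. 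The only caveat worth stating explicitly is a smoothness hypothesis: your expansion needs $F$ (hence $H$) to be twice differentiable at the constant state, which is stronger than the Lipschitz continuity assumed around \eqref{eq:consistent}; this assumption is already implicit in the lemma statement itself, which evaluates $\frac{\partial H}{\partial U_\ell}(u,\ldots,u)$, and it genuinely fails at sonic values for fluxes built from $\min$/$\max$ operations such as LTS-Godunov and LTS-Roe, so your proof is exactly as general as the result it establishes.
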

We now wish to express this equation in terms of the coefficients $Q$ of \eqref{eq:viscform}.
\begin{prop}
A $2k+1$ point scheme in the form~\eqref{eq:viscform} will give a second-order accurate approximation to the equation
\begin{equation}\label{eq:modQ}
  u_t+f(u)_x=\frac{1}{2}\Delta x\left[\frac{\Delta x}{\Delta t}\left(\bar{Q}^0-c^2+\sum_{i=1}^{k-1}2\left(\bar{Q}^{i-}+\bar{Q}^{i+}\right)\right)u_x\right]_x,
\end{equation}
where
\begin{equation}
  \bar{Q}=Q(u,\ldots,u).  
\end{equation}
\end{prop}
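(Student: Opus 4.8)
The plan is to start from the general modified-equation formula \eqref{eq:harmod} of Harten et al.\ and reduce the statement to a purely combinatorial identity. Since the $-c^2$ term and all prefactors already coincide with \eqref{eq:modQ}, it suffices to prove
\[
  \sum_{\ell=-k}^k \ell^2\,\frac{\partial H}{\partial U_{\ell}}(u,\ldots,u) = \bar{Q}^0 + \sum_{i=1}^{k-1}2\bigl(\bar{Q}^{i-}+\bar{Q}^{i+}\bigr),
\]
where $\partial H/\partial U_{\ell}$ denotes the derivative with respect to the argument in cell $j+\ell$, everything being evaluated at the constant state $U_m\equiv u$. Substituting this into \eqref{eq:harmod} then yields \eqref{eq:modQ} immediately.

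First I would differentiate the flux form $H = U_j - \frac{\Delta t}{\Delta x}(F_{j+1/2}-F_{j-1/2})$. The $U_j$ term contributes $\delta_{\ell,0}$, which is annihilated by the weight $\ell^2$, so only the flux terms survive. Substituting the viscosity formulation \eqref{eq:viscform} for $F_{j\pm1/2}$ (with the convention \eqref{eq:zeroconv} truncating the sums at $i=k-1$), I would then exploit the crucial simplification that at the constant state every difference $\Delta_{m+1/2}$ vanishes. Hence, when differentiating a product $Q^{i\pm}_{m+1/2}\Delta_{m+1/2}$, the term where the derivative hits $Q^{i\pm}$ carries the factor $\Delta_{m+1/2}=0$ and drops out; only the derivative of $\Delta_{m+1/2}$ remains, weighted by the frozen value $\bar{Q}^{i\pm}=Q^{i\pm}(u,u)$. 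In effect all viscosity coefficients may be treated as constants for this computation.

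Next I would collect terms. The central-flux part $\tfrac12(f_j+f_{j+1})$ contributes $-\tfrac{c}{2}(\delta_{\ell,1}-\delta_{\ell,-1})$ to $\partial H/\partial U_{\ell}$, whose $\ell^2$-weighted sum vanishes by antisymmetry; this is why the convective flux leaves no trace beyond the $-c^2$ already present in \eqref{eq:harmod}. For the viscous part, after carrying out the interface shift between $F_{j+1/2}$ and $F_{j-1/2}$ and the $\pm i$ argument shifts, each coefficient is found to multiply a centered second-difference stencil of Kronecker deltas: $\bar{Q}^0$ multiplies $\tfrac12(\delta_{\ell,1}-2\delta_{\ell,0}+\delta_{\ell,-1})$, while $\bar{Q}^{i-}$ multiplies $\delta_{\ell,1-i}-2\delta_{\ell,-i}+\delta_{\ell,-i-1}$ and $\bar{Q}^{i+}$ multiplies $\delta_{\ell,1+i}-2\delta_{\ell,i}+\delta_{\ell,i-1}$.

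The computation then closes on the elementary identity that the discrete second difference of the weight $\ell\mapsto\ell^2$ is constant: for every integer $a$,
\[
  \sum_{\ell}\ell^2\bigl(\delta_{\ell,a+1}-2\delta_{\ell,a}+\delta_{\ell,a-1}\bigr) = (a+1)^2-2a^2+(a-1)^2 = 2.
\]
Applying this with $a=0$, $a=-i$, and $a=i$ respectively yields $\bar{Q}^0$ (the factor $\tfrac12$ cancelling the $2$), $2\bar{Q}^{i-}$, and $2\bar{Q}^{i+}$, which is exactly the claimed sum. I expect the only genuine obstacle to be bookkeeping: keeping track of the interface shift together with the $\pm i$ argument shifts without dropping or double-counting a delta. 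The vanishing-difference observation is what makes the calculation tractable, and the constancy of the second difference of $\ell^2$ is what makes the final answer uniform across all coefficients.
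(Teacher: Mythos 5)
Your proof is correct, and it reaches the paper's intermediate goal --- evaluating $\sum_{\ell}\ell^2\,\partial H/\partial U_{\ell}$ at the constant state and feeding it into Harten's modified equation \eqref{eq:harmod} --- by a genuinely different intermediate route. The paper does not differentiate the viscosity form \eqref{eq:viscform} directly: it first passes to the flux-difference splitting form \eqref{eq:fdsform}, where the same product-rule observation you make (derivatives hitting the coefficients are multiplied by $u-u=0$ and die) yields the telescoping expression $\partial H/\partial U_{\pm\ell}=\pm\frac{\Delta t}{\Delta x}\bigl(\mathcal{A}^{\ell\mp}-\mathcal{A}^{(\ell-1)\mp}\bigr)$, and then converts to the $Q$ coefficients via the transformation \eqref{eq:Q2A}, obtaining $\partial H/\partial U_{\pm\ell}=\bar{Q}^{(\ell-1)\pm}-2\bar{Q}^{\ell\pm}+\bar{Q}^{(\ell+1)\pm}$ for $\ell\geq 2$ and $\partial H/\partial U_{\pm 1}=\tfrac{1}{2}\bigl(\bar{Q}^0\mp c\bigr)-2\bar{Q}^{1\pm}+\bar{Q}^{2\pm}$. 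These row-wise (fixed $\ell$) formulas are exactly the transpose of your column-wise Kronecker-delta stencils (fixed coefficient index), so the two computations are equivalent in substance; I verified your stencils, including the factor $\tfrac{1}{2}$ on the $\bar{Q}^0$ stencil, the cancellation of the centered convective part by evenness of $\ell^2$, and the truncation via \eqref{eq:zeroconv}, and they check out. What your route buys is self-containedness: you never need the $\mathcal{A}$ coefficients or \eqref{eq:Q2A}, and you make fully explicit the final summation step that the paper compresses into ``substituting into \eqref{eq:harmod} gives the desired result'' --- namely that the discrete second difference of $\ell\mapsto\ell^2$ is identically $2$, which is the summation-by-parts identity producing the uniform weights $1$ on $\bar{Q}^0$ and $2$ on each $\bar{Q}^{i\pm}$. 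What the paper's route buys is the record of the intermediate $\mathcal{A}$-form derivative formulas, whose telescoping structure mirrors the differencing used later in the TVD analysis of Lemma~\ref{lem:TVDcondfds}. The only point worth flagging in your write-up is cosmetic: the freezing argument as stated assumes the coefficients $Q^{i\pm}(U_{\mathrm{L}},U_{\mathrm{R}})$ are differentiable (or at least that $Q\Delta$ is differentiable at the constant state, which follows from Lipschitz continuity since the error term is quadratic in the perturbation); the paper makes the identical implicit assumption, so this is not a gap relative to the original.
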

\begin{proof}
At $(U_{-k},\ldots,U_{k})=(u,\ldots,u)$ we have by the product rule
\begin{subequations}
\begin{gather}
  \frac{\partial H}{\partial U_{\ell}}=\frac{\Delta t}{\Delta x}\left(\mathcal{A}^{(\ell-1)+}-\mathcal{A}^{\ell+}-(u-u)\sum_{i=0}^{k-1}\left(\frac{\partial\mathcal{A}^{i+}}{\partial U_{\ell}}+\frac{\partial\mathcal{A}^{i-}}{\partial U_{\ell}}\right)\right)\quad\text{for}\quad\ell\in\{-k,\ldots,-1\}, \\
  \frac{\partial H}{\partial U_{\ell}}=\frac{\Delta t}{\Delta x}\left(\mathcal{A }^{\ell-}-\mathcal{A}^{(\ell-1)-}-(u-u)\sum_{i=0}^{k-1}\left(\frac{\partial\mathcal{A}^{i+}}{\partial U_{\ell}}+\frac{\partial\mathcal{A}^{i-}}{\partial U_{\ell}}\right)\right)\text{for}\quad\ell\in\{1,\ldots,k\},
\end{gather}
\end{subequations}
which simplifies to
\begin{equation}
  \frac{\partial H}{\partial U_{\pm\ell}}=\pm\frac{\Delta t}{\Delta x}\left(\mathcal{A}^{\ell\mp}-\mathcal{A}^{(\ell-1)\mp}\right)\quad\text{for}\quad \ell\in\{1,\ldots,k\}.
\end{equation}
By the transformation \eqref{eq:Q2A} this can be written as
\begin{subequations}
\begin{gather}
  \frac{\partial H}{\partial U_{\pm\ell}}=\bar{Q}^{(\ell-1)\pm}-2\bar{Q}^{\ell\pm}+\bar{Q}^{(\ell+1)\pm}\quad\text{for}\quad \ell\in\{2,\ldots,k\}, \\
  \frac{\partial H}{\partial U_{\pm 1}}=\frac{1}{2}\left(\bar{Q}^0\mp c\right)-2\bar{Q}^{\pm 1}+\bar{Q}^{\pm 2}.
\end{gather}
\end{subequations}
Substituting into \eqref{eq:harmod} gives the desired result.
\end{proof}
The expression
\begin{equation}\label{eq:defD}
  D(u)=\bar{Q}^0-c^2+\sum_{i=1}^{k-1}2\left(\bar{Q}^{i-}+\bar{Q}^{i+}\right)
\end{equation}
may now be interpreted as a measure of the amount of numerical diffusion inherent in the scheme. Observe now that $D$ is a monotonically increasing function of each $Q$; this motivates denoting the parameters $Q$ as {\em partial numerical viscosity coefficients}.

We will now present closed form expressions for the numerical viscosity coefficients of LeVeque's LTS-Godunov scheme~\cite{lev82,lev84,lev85}.

\subsection{LTS-Godunov}

In the original Godunov's method, $U_j^{n+1}$ is taken to be the average value of the exact solution $u(x,t^{n+1})$ over the cell $(x_{j-1/2},x_{j+1/2})$, i.e.
\begin{equation}\label{eq:godproj}
  U_j^{n+1}=\frac{1}{\Delta x}\int_{x_{j-1/2}}^{x_{j+1/2}}u(x,t^{n+1})\dif x,
\end{equation}
subject to the piecewise constant initial data
\begin{equation}\label{eq:piececonst}
  u(x,t^n)=U_j^n\quad\text{for}\quad x_{j-1/2}\leq x<x_{j+1/2}.
\end{equation}
Herein, we note that each cell interface defines a local {\em Riemann problem:}
\begin{equation}\label{eq:ciriemann}
  \hat{u}_{j+1/2}(x,t^n)=\begin{cases} U_j & \text{for}\quad x < x_{j+1/2} \\ U_{j+1} & \text{for}\quad x \geq x_{j+1/2} \end{cases}.
\end{equation}
For scalar equations, the exact unique entropy solution $\hat{u}_{j+1/2}(x,t)$ to the initial value problem defined by~\eqref{eq:ciriemann} may be written in closed form as follows:
\begin{equation}\label{eq:oshu}
   \hat{u}_{j+1/2}(x,t)=\hat{u}_{j+1/2}(\zeta(x,t))=\begin{cases} -\frac{\dif}{\dif\zeta}\left(\min\limits_{u\in[U_j,U_{j+1}]}[f(u)-\zeta u]\right) & \text{for}\quad U_j<U_{j+1} \\ -\frac{\dif}{\dif\zeta}\left(\max\limits_{u\in[U_{j+1},U_{j}]}[f(u)-\zeta u]\right) & \text{for}\quad U_j\geq U_{j+1} \end{cases}
\end{equation}
where
\begin{equation}
  \zeta=\frac{x-x_{j+1/2}}{t-t^n}.  
\end{equation}
This expression, due to Osher~\cite{osh84}, is valid for general nonlinear flux functions $f(u)$. Then, as long as the solutions of each cell interface Riemann problem do not interact, the numerical flux of the classical Godunov scheme can be written as~\cite{osh84}:
\begin{equation}
  F_{j+1/2}=\begin{cases} \min\limits_{u\in[U_j,U_{j+1}]} f(u) & \text{for}\quad U_j<U_{j+1} \\  \max\limits_{u\in[U_{j+1},U_{j}]} f(u) & \text{for}\quad U_j\geq U_{j+1} \end{cases}.
\end{equation}
However, for Courant numbers greater than 1, wave interactions must be taken into account, and maintaining the exact projection \eqref{eq:godproj} becomes an intractable problem.

In a series of papers, LeVeque~\cite{lev82,lev84,lev85} proposed a simplified Large Time Step generalization of Godunov's method based on treating each wave interaction as linear. Then the numerical solution no longer represents the projected exact solution after each time step, but a conservative and consistent method is retained that is TVD for scalar equations~\cite{lev84}. LeVeque originally formulated his scheme as follows~\cite{lev85}:
\begin{subequations}\label{eq:levorig}
\begin{equation}\label{eq:proj}
  U_j^{n+1}=\frac{1}{\Delta x}\int_{x_{j-1/2}}^{x_{j+1/2}}u^*(x,t^{n+1})\dif x,
\end{equation}
where
\begin{equation}
  u^*(x,t)=u(x,t^n)+\sum_{\ell=-\infty}^{\infty}\left(\hat{u}_{\ell-1/2}(x,t)-\hat{u}_{\ell-1/2}(x,t^n)\right).
\end{equation}
\end{subequations}
 A main result of this paper is a closed form expression of LeVeque's method for scalar equations.
\begin{lem}
  For scalar equations, LeVeque's LTS-Godunov scheme can be written in closed form as follows:
\begin{equation}\label{eq:closedgod}
  U_j^{n+1}=\sum_{i=-\infty}^{\infty}\left(\frac{\Delta t}{\Delta x}\mathscr{M}_{j+1/2-i}\left(f(u)-(i-1)\frac{\Delta x}{\Delta t}u\right)-\frac{\Delta t}{\Delta x}\mathscr{M}_{j+1/2-i}\left(f(u)-i\frac{\Delta x}{\Delta t}u\right)-U_i^n\right),
\end{equation}
where the function $\mathscr{M}$ is defined as 
\begin{equation}
  \mathscr{M}_{j+1/2}(w(u))=\begin{cases} \min\limits_{u\in\mathcal{R}_{j+1/2}} w(u) & \text{if}\quad U_j<U_{j+1} \\ \max\limits_{u\in\mathcal{R}_{j+1/2}} w(u) & \text{if}\quad U_j\geq U_{j+1} \end{cases},
\end{equation} 
with
\begin{equation}
  \mathcal{R}_{j+1/2}=\left[\min(U_j,U_{j+1}),\max(U_j,U_{j+1})\right].  
\end{equation}
\end{lem}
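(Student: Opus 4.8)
The plan is to evaluate the defining cell average \eqref{eq:levorig} head-on, reducing every ingredient to the function $\mathscr{M}$ by means of Osher's closed form \eqref{eq:oshu}. First I would expand
\[ U_j^{n+1}=\frac{1}{\Delta x}\int_{x_{j-1/2}}^{x_{j+1/2}}u(x,t^n)\,\dif x+\sum_{\ell=-\infty}^{\infty}\left(I_\ell-J_\ell\right), \]
where $I_\ell$ and $J_\ell$ denote the cell-$j$ averages of $\hat{u}_{\ell-1/2}(\cdot,t^{n+1})$ and $\hat{u}_{\ell-1/2}(\cdot,t^n)$ respectively. The leading term is simply $U_j^n$, since the data \eqref{eq:piececonst} is piecewise constant on the cell.

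The key observation is that Osher's formula \eqref{eq:oshu} can be recast as $\hat{u}_{\ell-1/2}(\zeta)=-\frac{\dif}{\dif\zeta}\mathscr{M}_{\ell-1/2}(f(u)-\zeta u)$, because the interval $\mathcal{R}_{\ell-1/2}$ is exactly the range over which the minimum (resp.\ maximum) is taken in the two branches of \eqref{eq:oshu}. Setting $G_{\ell-1/2}(\zeta)=\mathscr{M}_{\ell-1/2}(f(u)-\zeta u)$, I would change variables to $\zeta=(x-x_{\ell-1/2})/\Delta t$ in the integral defining $I_\ell$ and apply the fundamental theorem of calculus. As an infimum (resp.\ supremum) of maps that are affine in $\zeta$, the function $G_{\ell-1/2}$ is concave (resp.\ convex) and Lipschitz, hence absolutely continuous, so $\int(-G'_{\ell-1/2})$ telescopes to $G_{\ell-1/2}$ evaluated at the two cell endpoints even though $\hat{u}_{\ell-1/2}$ itself carries shock discontinuities. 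Writing $i:=j-\ell+1$ turns the endpoints into $\zeta=(i-1)\Delta x/\Delta t$ and $\zeta=i\Delta x/\Delta t$, and the prefactor $\Delta t/\Delta x$ emerges from the Jacobian together with the cell-average normalization; this reproduces precisely the two $\mathscr{M}_{j+1/2-i}$ terms appearing in \eqref{eq:closedgod}.

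It then remains to handle the initial-data averages $J_\ell$. Since $\hat{u}_{\ell-1/2}(\cdot,t^n)$ is the piecewise constant Riemann datum and the interface $x_{\ell-1/2}$ always coincides with a cell edge, $J_\ell$ equals $U_\ell^n$ when the interface lies at or to the left of cell $j$ (that is, $i\geq 1$) and $U_{\ell-1}^n$ when it lies at or to the right ($i\leq 0$). Summing over $\ell$ and reindexing, the two families together cover every integer index, with the single index $j$ counted twice, so that $\sum_\ell J_\ell=\sum_m U_m^n+U_j^n$. Substituting this and the constant term back, the two copies of $U_j^n$ cancel and leave $U_j^{n+1}=\sum_\ell I_\ell-\sum_m U_m^n$, which is \eqref{eq:closedgod} after relabelling through the bijection $i=j-\ell+1$.

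The step I expect to demand the most care is this final regrouping: the formula \eqref{eq:closedgod} pairs the $i$-th $\mathscr{M}$-difference with $U_i^n$, whereas the natural pairing coming from $I_\ell-J_\ell$ attaches to $I_{\ell}$ the \emph{shifted} datum $J_\ell\in\{U_\ell^n,U_{\ell-1}^n\}$, so passing to the stated form is a genuine rearrangement of the series. Under the standing assumption of compact support this is harmless: for $\lvert\ell-j\rvert$ large the associated Riemann problem has equal left and right states, whence $G_{\ell-1/2}$ is affine, $I_\ell=U_{\pm\infty}=0$ and $J_\ell=0$, so $\sum_\ell I_\ell$ and $\sum_m U_m^n$ are each \emph{finite} sums and every rearrangement above is legitimate term by term. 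I would make this far-field truncation explicit, after which the equality \eqref{eq:closedgod} follows.
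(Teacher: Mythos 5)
Your proof is correct and takes essentially the same route as the paper's: expand \eqref{eq:levorig} using the piecewise constant data \eqref{eq:piececonst}, change variables to $\zeta$ with the bijection $i=j-\ell+1$, and integrate Osher's formula \eqref{eq:oshu} via the fundamental theorem of calculus to obtain the $\mathscr{M}$-differences. You merely make explicit three steps the paper performs silently --- the evaluation $\sum_\ell J_\ell=\sum_m U_m^n+U_j^n$, the absolute continuity of $\zeta\mapsto\mathscr{M}_{\ell-1/2}(f(u)-\zeta u)$ justifying the telescoping across shock discontinuities, and the compact-support truncation legitimizing the rearrangement of the infinite sums --- all of which are sound refinements rather than a different argument.
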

\begin{proof}
Using \eqref{eq:piececonst}, we may rewrite \eqref{eq:levorig} as
\begin{multline}
  U_j^{n+1}=U_j^n+\frac{1}{\Delta x}\int_{x_{j-1/2}}^{x_{j+1/2}}\sum_{\ell=-\infty}^{\infty}\left(\hat{u}_{\ell-1/2}(x,t^{n+1})-\hat{u}_{\ell-1/2}(x,t^n)\right)\dif x \\
   =\frac{1}{\Delta x}\sum_{\ell=-\infty}^{\infty}\int_{x_{j-1/2}}^{x_{j+1/2}}\hat{u}_{\ell-1/2}(x,t^{n+1})\dif x-\sum_{\ell=-\infty}^{\infty}U_\ell^n.
\end{multline}
By changing the integration variable to $\zeta$, we obtain
\begin{equation}\label{eq:withzeta}
  U_j^{n+1}=\frac{\Delta t}{\Delta x}\sum_{i=-\infty}^{\infty}\int_{(i-1)\frac{\Delta x}{\Delta t}}^{i\frac{\Delta x}{\Delta t}}\hat{u}_{j+1/2-i}(\zeta)\dif\zeta-\sum_{i=-\infty}^{\infty}U_i^n.
\end{equation}
From \eqref{eq:oshu} we then directly obtain
\begin{equation}
    \int_{(i-1)\frac{\Delta x}{\Delta t}}^{i\frac{\Delta x}{\Delta t}}\hat{u}_{j+1/2-i}(\zeta)\dif\zeta=\mathscr{M}_{j+1/2-i}\left(f(u)-(i-1)\frac{\Delta x}{\Delta t}u\right)-\mathscr{M}_{j+1/2-i}\left(f(u)-i\frac{\Delta x}{\Delta t}u\right),
\end{equation}
completing the proof.
\end{proof}
\begin{lem}\label{lem:godfds}
  For scalar equations, LeVeque's LTS-Godunov scheme can be written in the flux-difference splitting formulation \eqref{eq:fdsform} with fluctuations
\begin{subequations}\label{eq:godfds}
\begin{gather}
  \left[\mathcal{A}^{i+}\Delta\right]_{j-1/2-i}=\mathscr{M}_{j-1/2-i}\left(f(u)-(i+1)\frac{\Delta x}{\Delta t}u\right)-\mathscr{M}_{j-1/2-i}\left(f(u)-i\frac{\Delta x}{\Delta t}u\right)+\frac{\Delta x}{\Delta t}U_{j-i} \\
  \left[\mathcal{A}^{i-}\Delta\right]_{j+1/2+i}=\mathscr{M}_{j+1/2+i}\left(f(u)+i\frac{\Delta x}{\Delta t}u\right)-\mathscr{M}_{j+1/2+i}\left(f(u)+(i+1)\frac{\Delta x}{\Delta t}u\right)+\frac{\Delta x}{\Delta t}U_{j+i},
\end{gather}
\end{subequations}
where
\begin{equation}\label{eq:Azero}
  \mathcal{A}^{i\pm}=0
\end{equation}
for
\begin{equation}\label{eq:largei}
  i>\frac{\Delta t}{\Delta x}\max\limits_u\left|f'(u)\right|.
\end{equation}
\end{lem}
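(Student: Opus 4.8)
The plan is to verify, by direct substitution, that the flux-difference splitting ansatz~\eqref{eq:fdsform} equipped with the fluctuations~\eqref{eq:godfds} reproduces the closed-form update~\eqref{eq:closedgod} proved in the preceding lemma; since each fluctuation in~\eqref{eq:godfds} depends only on the two cells adjacent to its interface, this simultaneously exhibits LTS-Godunov as a local scheme in the sense of Definition~\ref{def:local} and, via the earlier uniqueness proposition, identifies these as the unique local fluctuations. Writing $a=\Delta x/\Delta t$ for brevity, I would first insert~\eqref{eq:godfds} into~\eqref{eq:fdsform} and split the result into the explicit cell-average terms $a\,U_{j-i}$, $a\,U_{j+i}$ on the one hand, and the telescoping $\mathscr{M}$-differences on the other. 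The cell-average contribution is elementary: the right-going sum produces $\sum_{i\ge 0}U_{j-i}$, the left-going sum produces $\sum_{i\ge 0}U_{j+i}$, and together with the leading $U_j^n$ these collapse to $-\sum_{\ell}U_\ell^n$, matching term by term the corresponding part of~\eqref{eq:closedgod}.

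The heart of the proof is a re-indexing that recombines the two one-sided $\mathscr{M}$-sums into the single bi-infinite sum over interfaces in~\eqref{eq:closedgod}. In the right-going sum I would set $m=i+1$, which sends the interface label $j-1/2-i$ to $j+1/2-m$ with $m\ge 1$ and the speeds $-(i+1)a\,u,\,-i a\,u$ to $-m a\,u,\,-(m-1)a\,u$; in the left-going sum I would set $m=-i$, sending $j+1/2+i$ to $j+1/2-m$ with $m\le 0$ and $+i a\,u,\,+(i+1)a\,u$ to $-m a\,u,\,-(m-1)a\,u$. The ranges $m\ge 1$ and $m\le 0$ tile $\mathbb{Z}$ with neither overlap nor gap, and each reindexed summand equals minus the matching summand of~\eqref{eq:closedgod}; after restoring the factor $a$ the two representations coincide.

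I would prove the vanishing claim~\eqref{eq:Azero}--\eqref{eq:largei} first, both because it is self-contained and because it guarantees that only finitely many fluctuations reach cell $j$, which justifies the termwise rearrangements above. The argument uses only the definition of $\mathscr{M}$ together with Osher's representation~\eqref{eq:oshu}: when a speed $\zeta$ satisfies $|\zeta|>\max_u|f'(u)|$, the function $u\mapsto f(u)-\zeta u$ is strictly monotone on $\mathcal{R}_{j+1/2}$, so its relevant extremum (the min if $U_j<U_{j+1}$, the max if $U_j\ge U_{j+1}$) is attained at the same endpoint irrespective of the sign of the jump. For the right-going fluctuation at interface $j-1/2-i$ this endpoint carries the value $U_{j-i}$, and both $\mathscr{M}$ terms, evaluated at the speeds $i a$ and $(i+1)a$ which both exceed $\max|f'|$ precisely when $i>\frac{\Delta t}{\Delta x}\max_u|f'(u)|$, reduce to $f(U_{j-i})-i a\,U_{j-i}$ and $f(U_{j-i})-(i+1)a\,U_{j-i}$; their difference is $-a\,U_{j-i}$, cancelling the explicit term $a\,U_{j-i}$. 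The left-going case is symmetric, with the endpoint value $U_{j+i}$.

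The main obstacle is the index and endpoint bookkeeping rather than any deep estimate: I must keep straight which physical cell plays the role of $U_{\mathrm{L}}$ or $U_{\mathrm{R}}$ at each shifted interface, verify that the monotone-branch extremum lands on the endpoint carrying exactly the cell value that appears explicitly in~\eqref{eq:godfds}, and confirm that the two reindexed ranges partition $\mathbb{Z}$ cleanly. Once the vanishing result confines all sums to finitely many nonzero terms, the remaining manipulations are purely algebraic and the identification with~\eqref{eq:closedgod} follows.
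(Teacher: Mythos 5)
Your proposal is correct and takes essentially the same route as the paper: the paper's proof likewise obtains \eqref{eq:godfds} by rearranging the closed form \eqref{eq:closedgod} into the template \eqref{eq:fdsform}, and establishes \eqref{eq:Azero} from the Lipschitz bound $f(U_{\mathrm{a}})-f(U_{\mathrm{b}})\leq\max_u|f'(u)|\,(U_{\mathrm{a}}-U_{\mathrm{b}})\leq i\frac{\Delta x}{\Delta t}(U_{\mathrm{a}}-U_{\mathrm{b}})$, which is exactly your monotonicity observation that for speeds exceeding $\max_u|f'(u)|$ the relevant $\mathscr{M}$-extremum sits at the endpoint carrying $U_{j\mp i}$ and cancels the explicit term. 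Your write-up simply makes explicit the reindexing and endpoint bookkeeping that the paper compresses into the phrase ``recovered by rearranging.''
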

\begin{proof}
The fluctuations \eqref{eq:godfds} are recovered by rearranging \eqref{eq:closedgod} in the context of \eqref{eq:fdsform}. When \eqref{eq:largei} holds, we have that
\begin{equation}
  f(U_{\mathrm{a}})-f(U_{\mathrm{b}})\leq\max\limits_u\left|f'(u)\right|\left(U_{\mathrm{a}}-U_{\mathrm{b}}\right)\leq i\frac{\Delta x}{\Delta t}\left(U_{\mathrm{a}}-U_{\mathrm{b}}\right)\leq (i+1)\frac{\Delta x}{\Delta t}\left(U_{\mathrm{a}}-U_{\mathrm{b}}\right)
\end{equation}
for all $U_{\mathrm{b}} \leq U_{\mathrm{a}}$. Then \eqref{eq:Azero} follows from \eqref{eq:godfds}.
\end{proof}
Hence, defining the global Courant number as
\begin{equation}\label{eq:globcou}
  \bar{C}=\frac{\Delta t}{\Delta x}\max\limits_{x,t}\left|f'(u(x,t))\right|,  
\end{equation}
we observe that LTS-Godunov is a $(2k+1)$ point scheme where
\begin{equation}
  k=\lceil\bar{C}\rceil.
\end{equation}
\begin{prop}
  For scalar equations, the numerical flux of LeVeque's LTS-Godunov scheme can be written in closed form as follows:
\begin{multline}\label{eq:godflux}
  F_{j+1/2}=\mathscr{M}_{j+1/2}\left(f(u)\right)-\sum_{i=1}^{\infty}\left[\left(f-i\frac{\Delta x}{\Delta t}u\right)_{j+1-i}-\mathscr{M}_{j+1/2-i}\left(f(u)-i\frac{\Delta x}{\Delta t}u\right)\right] \\
-\sum_{i=1}^{\infty}\left[\left(f+i\frac{\Delta x}{\Delta t}u\right)_{j+i}-\mathscr{M}_{j+1/2+i}\left(f(u)+i\frac{\Delta x}{\Delta t}u\right)\right].
\end{multline}
\end{prop}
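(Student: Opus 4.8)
The plan is to pin down $F_{j+1/2}$ through the conservation form \eqref{eq:FV1}: the numerical flux is determined, up to the additive constant fixed by consistency \eqref{eq:consistent}, by requiring that $F_{j+1/2}-F_{j-1/2}$ reproduce the update encoded in the flux-difference splitting \eqref{eq:fdsform} with the LTS-Godunov fluctuations of Lemma \ref{lem:godfds}. I therefore take the right-hand side of \eqref{eq:godflux} as a candidate flux $G_{j+1/2}$ and verify two things: that $G$ is consistent, and that $G_{j+1/2}-G_{j-1/2}$ equals $\sum_{i\geq 0}\left([\mathcal{A}^{i+}\Delta]_{j-1/2-i}+[\mathcal{A}^{i-}\Delta]_{j+1/2+i}\right)$. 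Consistency is immediate: for constant data every interval $\mathcal{R}$ degenerates to a point, so each $\mathscr{M}$ returns its argument evaluated at $u$, every bracket in \eqref{eq:godflux} vanishes, and $G_{j+1/2}=f(u)$.

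The key computational tool is a pair of telescoping identities for the tails of the right- and left-going fluctuations. Summing the $\mathscr{M}$-differences in \eqref{eq:godfds} and using that the fluctuations vanish beyond the index \eqref{eq:largei} (so that the far boundary term $\mathscr{M}_{m+1/2}(f-(N+1)\tfrac{\Delta x}{\Delta t}u)$ collapses to $(f-(N+1)\tfrac{\Delta x}{\Delta t}u)_{m+1}$ for large $N$), I obtain
\begin{equation*}
  \sum_{i'\geq i}\left[\mathcal{A}^{i'+}\Delta\right]_{m+1/2}=\left(f-i\tfrac{\Delta x}{\Delta t}u\right)_{m+1}-\mathscr{M}_{m+1/2}\left(f-i\tfrac{\Delta x}{\Delta t}u\right),
\end{equation*}
and symmetrically $\sum_{i'\geq i}[\mathcal{A}^{i'-}\Delta]_{m+1/2}=\mathscr{M}_{m+1/2}(f+i\tfrac{\Delta x}{\Delta t}u)-(f+i\tfrac{\Delta x}{\Delta t}u)_{m}$. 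In particular, at $i=0$ these give $\mathscr{M}_{m+1/2}(f)=f_m+\sum_{i'\geq 0}[\mathcal{A}^{i'-}\Delta]_{m+1/2}=f_{m+1}-\sum_{i'\geq 0}[\mathcal{A}^{i'+}\Delta]_{m+1/2}$. With these identities the two sums in \eqref{eq:godflux} are recognised, term by term, as $\mathscr{M}_{j+1/2}(f)$ minus the right-going tails $\sum_{i'\geq i}[\mathcal{A}^{i'+}\Delta]_{j+1/2-i}$ over interfaces $i\geq1$ to the left, plus the left-going tails $\sum_{i'\geq i}[\mathcal{A}^{i'-}\Delta]_{j+1/2+i}$ over interfaces $i\geq1$ to the right.

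It then remains to difference this representation. Writing $\mathscr{M}_{j+1/2}(f)-\mathscr{M}_{j-1/2}(f)$ via the $i=0$ identities and reindexing each of the two tail sums — shifting the summation so that a tail at one interface is compared with the corresponding tail at the neighbouring interface — the nested sums collapse: successive tails differ by a single fluctuation $[\mathcal{A}^{i\pm}\Delta]$, and the leftover head terms cancel against the $\mathscr{M}$-difference. What survives is exactly $\sum_{i\geq 0}[\mathcal{A}^{i+}\Delta]_{j-1/2-i}+\sum_{i\geq 0}[\mathcal{A}^{i-}\Delta]_{j+1/2+i}$, which by \eqref{eq:fdsform} and \eqref{eq:FV1} equals $F_{j+1/2}-F_{j-1/2}$. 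Since $G$ and $F$ both satisfy consistency and agree in the constant far field, the telescoping of cell updates fixes the additive constant and yields $F_{j+1/2}=G_{j+1/2}$, which is \eqref{eq:godflux}.

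I expect the main obstacle to be the bookkeeping in this last step: the two tail sums carry an interface index that itself depends on the outer summation variable, so the reindexing that exposes the telescoping must be carried out carefully, and the evaluation of the telescoping boundary terms must be justified by the finiteness guaranteed by \eqref{eq:Azero}--\eqref{eq:largei} rather than by a naive passage to the limit.
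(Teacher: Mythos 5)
Your proposal is correct and takes essentially the same route as the paper, whose proof is precisely the one-line observation that \eqref{eq:godflux} is consistent in the sense of \eqref{eq:consistent} and reproduces the fluctuations \eqref{eq:godfds} upon differencing $F_{j+1/2}-F_{j-1/2}$. Your telescoping tail identities $\sum_{i'\geq i}\left[\mathcal{A}^{i'\pm}\Delta\right]$, with the far boundary term collapsing via the monotonicity of $f(u)\mp(N+1)\frac{\Delta x}{\Delta t}u$ for large $N$, simply supply in full the verification the paper leaves implicit, and they check out.
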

\begin{proof}
  Observe that \eqref{eq:godflux} is consistent in the sense of \eqref{eq:consistent} and equivalent to \eqref{eq:godfds} through the transformation
\begin{equation}
  F_{j+1/2}-F_{j-1/2}=\frac{\Delta x}{\Delta t}\sum_{i=0}^{\infty}\left(\mathcal{A}^{i+}_{j-1/2-i}\Delta_{j-1/2-i}+\mathcal{A}^{i-}_{j+1/2+i}\Delta_{j+1/2+i}\right). 
\end{equation}
\end{proof}
\begin{prop}\label{prop:Qgod}
For scalar equations, the LTS-Godunov scheme can be expressed in the viscosity formulation \eqref{eq:viscform} with coefficients
\begin{subequations}\label{eq:Qgod}
\begin{align}
  Q^0_{j+1/2}&=\frac{\Delta t}{\Delta x}\frac{f_j+f_{j+1}-2\mathscr{M}_{j+1/2}\left(f(u)\right)}{U_{j+1}-U_j}, \\
  Q^{i+}_{j+1/2}&=\frac{\left(\frac{\Delta t}{\Delta x}f+iu\right)_{j}-\mathscr{M}_{j+1/2}\left(\frac{\Delta t}{\Delta x}f(u)+iu\right)}{U_{j+1}-U_j}, \\
  Q^{i-}_{j+1/2}&=\frac{\left(\frac{\Delta t}{\Delta x}f-iu\right)_{j+1}-\mathscr{M}_{j+1/2}\left(\frac{\Delta t}{\Delta x}f(u)-iu\right)}{U_{j+1}-U_j}.
\end{align}
\end{subequations}
\end{prop}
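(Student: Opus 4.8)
The plan is to construct the viscosity coefficients directly from the flux-difference-splitting fluctuations of Lemma~\ref{lem:godfds} by invoking the inverse transformation \eqref{eq:A2Q}. Since the fluctuations in \eqref{eq:godfds} depend only on the two states adjacent to the interface in question, LTS-Godunov is a local scheme in the sense of Definition~\ref{def:local}, so the mapping \eqref{eq:A2Q} is available. The first step is bookkeeping: the expressions in \eqref{eq:godfds} are written at the shifted interfaces $j-1/2-i$ and $j+1/2+i$, so I would re-index them to read off the local functions $\mathcal{A}^{i+}_{j+1/2}$ and $\mathcal{A}^{i-}_{j+1/2}$ as functions of the fixed pair $(U_j,U_{j+1})$. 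Writing $g_p:=\mathscr{M}_{j+1/2}(f(u)-p\frac{\Delta x}{\Delta t}u)$ and $\tilde g_p:=\mathscr{M}_{j+1/2}(f(u)+p\frac{\Delta x}{\Delta t}u)$, this shift gives $\mathcal{A}^{i+}_{j+1/2}\Delta_{j+1/2}=g_{i+1}-g_i+\frac{\Delta x}{\Delta t}U_{j+1}$ and $\mathcal{A}^{i-}_{j+1/2}\Delta_{j+1/2}=\tilde g_i-\tilde g_{i+1}+\frac{\Delta x}{\Delta t}U_j$.

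The key device is to resolve the telescoping sums in \eqref{eq:A2Q}. Term by term these sums are formally divergent because of the linear-in-$p$ contributions; they converge only once the $\mathscr{M}$-differences are combined with the accompanying $\frac{\Delta x}{\Delta t}U$ terms. To make this rigorous I would peel off the asymptotically linear part of $g_p$ and $\tilde g_p$. By the bound \eqref{eq:largei}, for $p>\frac{\Delta t}{\Delta x}\max_u|f'(u)|$ the extremizer defining $g_p$ is pinned at $u=U_{j+1}$ and the one defining $\tilde g_p$ at $u=U_j$, in both the $\min$ and the $\max$ branch of $\mathscr{M}$; hence the remainders $h_p:=g_p-(f(U_{j+1})-p\frac{\Delta x}{\Delta t}U_{j+1})$ and $\tilde h_p:=\tilde g_p-(f(U_j)+p\frac{\Delta x}{\Delta t}U_j)$ vanish for all large $p$ and tend to $0$. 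Substituting back collapses the fluctuations to pure differences, $\mathcal{A}^{i+}_{j+1/2}\Delta_{j+1/2}=h_{i+1}-h_i$ and $\mathcal{A}^{i-}_{j+1/2}\Delta_{j+1/2}=\tilde h_i-\tilde h_{i+1}$, so each series genuinely telescopes.

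Feeding these into \eqref{eq:A2Q} finishes the proof. The sum $\sum_{p\ge i}\mathcal{A}^{p+}_{j+1/2}\Delta_{j+1/2}$ collapses to $-h_i$ and $\sum_{p\ge i}\mathcal{A}^{p-}_{j+1/2}\Delta_{j+1/2}$ to $\tilde h_i$, which, after recalling the positive-homogeneity identity $\frac{\Delta t}{\Delta x}\mathscr{M}_{j+1/2}(f(u)\mp p\frac{\Delta x}{\Delta t}u)=\mathscr{M}_{j+1/2}(\frac{\Delta t}{\Delta x}f(u)\mp pu)$ and dividing by $\Delta_{j+1/2}=U_{j+1}-U_j$, yield the stated $Q^{i-}_{j+1/2}$ and $Q^{i+}_{j+1/2}$ respectively. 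For $Q^0$ the two telescopes give $\sum_{p\ge 0}(\mathcal{A}^{p+}_{j+1/2}-\mathcal{A}^{p-}_{j+1/2})\Delta_{j+1/2}=-h_0-\tilde h_0$, and since $g_0=\tilde g_0=\mathscr{M}_{j+1/2}(f(u))$ this equals $f_j+f_{j+1}-2\mathscr{M}_{j+1/2}(f(u))$, reproducing \eqref{eq:Qgod}.

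I expect the main obstacle to be precisely the two intertwined issues in the middle step: correctly re-indexing the shifted fluctuations of \eqref{eq:godfds} onto a single interface with states $(U_j,U_{j+1})$, and justifying the rearrangement of the formally divergent telescoping series. The endpoint-pinning argument built on \eqref{eq:largei} is exactly what secures both, since it guarantees that only finitely many fluctuations are nonzero and that the remainders $h_p,\tilde h_p$ vanish identically for large $p$, turning the manipulations into genuinely finite telescoping sums.
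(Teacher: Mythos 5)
Your proof is correct and takes essentially the same approach as the paper, whose one-line proof explicitly offers the route you chose: applying the transformation \eqref{eq:A2Q} to the fluctuations of Lemma~\ref{lem:godfds}. The details you supply --- re-indexing the shifted fluctuations to the fixed interface, the endpoint-pinning argument showing $h_p=\tilde h_p=0$ for large $p$ (which recovers \eqref{eq:Azero}), and the resulting finite telescoping --- are exactly the steps the paper leaves implicit, and they check out.
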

\begin{proof}
  Substitute into \eqref{eq:viscform} to recover \eqref{eq:godflux}. Alternatively, use the transformation~\eqref{eq:A2Q}.
\end{proof}
Note that LTS-Godunov is a {\em local} scheme in the sense of Definition~\ref{def:local}, and that
\begin{equation}
  Q^{\pm i}=0\quad\text{for}\quad i>\frac{\Delta t}{\Delta x}\max\limits_u\left|f'(u)\right|
\end{equation}
follows from the same reasoning as applied for Lemma~\ref{lem:godfds}.

\section{TVD Analysis}\label{sec:TVD}

LeVeque~\cite{lev84} proved that the LTS-Godunov scheme presented above satisfies the TVD property \eqref{eq:defTVD} and hence converges to weak solutions for scalar conservation laws. A highly classical result states the TVD condition for 3-point schemes:
\begin{lem}\label{lem:harten}
  A 3-point conservative scheme in the form \eqref{eq:3visc} is unconditionally TVD if and only if
\begin{equation}
   \left|C_{j+1/2}^n\right|\leq Q^n_{j+1/2}\leq 1
\end{equation}
for all $j$.
\end{lem}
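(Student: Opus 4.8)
The plan is to recast \eqref{eq:3visc} into Harten's incremental form and then prove the two implications separately. First I would subtract the numerical fluxes \eqref{eq:3visc} at $j+1/2$ and $j-1/2$ and use \eqref{eq:defcou} to replace $\frac{\Delta t}{\Delta x}(f_{j+1}-f_{j-1})$ by $C_{j+1/2}\Delta_{j+1/2}+C_{j-1/2}\Delta_{j-1/2}$. Substituting into \eqref{eq:FV1}, the update collapses to
\[
U_j^{n+1}=U_j^n+B^+_{j+1/2}\Delta_{j+1/2}-B^-_{j-1/2}\Delta_{j-1/2},\qquad B^\pm_{j+1/2}=\tfrac12\left(Q_{j+1/2}\mp C_{j+1/2}\right).
\]
The claimed bound $|C_{j+1/2}|\le Q_{j+1/2}\le 1$ is then exactly equivalent to $B^+_{j+1/2}\ge 0$, $B^-_{j+1/2}\ge 0$ and $B^+_{j+1/2}+B^-_{j+1/2}=Q_{j+1/2}\le 1$, so the whole statement reduces to the incremental-form criterion for these coefficients.

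For sufficiency I would form the difference recursion by subtracting the update at $j$ from that at $j+1$:
\[
\Delta_{j+1/2}^{n+1}=\left(1-B^+_{j+1/2}-B^-_{j+1/2}\right)\Delta_{j+1/2}+B^+_{j+3/2}\Delta_{j+3/2}+B^-_{j-1/2}\Delta_{j-1/2}.
\]
Under the assumed sign conditions all three coefficients are nonnegative and the leading one lies in $[0,1]$, so the triangle inequality bounds $|\Delta_{j+1/2}^{n+1}|$ by the corresponding nonnegative combination of $|\Delta_{j+3/2}|$, $|\Delta_{j+1/2}|$ and $|\Delta_{j-1/2}|$. Summing over $j$ and reindexing the two off-diagonal sums --- legitimate under the compact-support or periodic hypothesis --- telescopes the coefficients of each $|\Delta_{j+1/2}|$ back to $1$, giving $\mathrm{TV}^{n+1}\le\mathrm{TV}^n$.

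For necessity I would evaluate the scheme on the single-jump data \eqref{eq:initjump}, which isolates the interface $p+1/2$: every difference vanishes except $\Delta_{p+1/2}=U_{\mathrm{R}}-U_{\mathrm{L}}$. The recursion then leaves only three nonzero differences after one step, namely $\Delta_{p-1/2}^{n+1}=B^+_{p+1/2}\Delta_{p+1/2}$, $\Delta_{p+1/2}^{n+1}=(1-B^+_{p+1/2}-B^-_{p+1/2})\Delta_{p+1/2}$ and $\Delta_{p+3/2}^{n+1}=B^-_{p+1/2}\Delta_{p+1/2}$, with $B^\pm_{p+1/2}$ taken at $(U_{\mathrm{L}},U_{\mathrm{R}})$. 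Hence $\mathrm{TV}^{n+1}=\left(|B^+|+|1-B^+-B^-|+|B^-|\right)|\Delta_{p+1/2}|$, and TVD forces $|B^+|+|B^-|+|1-B^+-B^-|\le 1$. Since the triangle inequality already gives this sum $\ge|B^++B^-+(1-B^+-B^-)|=1$, equality must hold; as the three summands add up to $1>0$, each of $B^+$, $B^-$ and $1-B^+-B^-$ must be nonnegative, which is precisely $|C_{p+1/2}|\le Q_{p+1/2}\le 1$. Because $(U_{\mathrm{L}},U_{\mathrm{R}})$ is arbitrary, the condition holds at every interface, extending to the diagonal $U_{\mathrm{L}}=U_{\mathrm{R}}$ by continuity of $Q$ and $C$.

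I expect the necessity direction to be the delicate part: the trick is to choose data that isolates a single jump so that the three emerging differences expose the individual signs of $B^\pm$, and then to use the \emph{equality} case of the triangle inequality --- rather than a mere magnitude estimate --- to upgrade the scalar inequality $\mathrm{TV}^{n+1}\le\mathrm{TV}^n$ into the three separate sign constraints. Sufficiency is routine once the reindexing of the sum is justified by the boundary assumptions.
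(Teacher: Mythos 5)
Your proof is correct, but it is worth noting that the paper does not actually prove Lemma~\ref{lem:harten} at all: it is stated as a classical result, with sufficiency attributed to Harten~\cite{har83} and necessity to Tadmor~\cite{tad84a}. What you have written is a self-contained reconstruction of exactly that classical argument: your $B^\pm_{j+1/2}=\tfrac12\left(Q_{j+1/2}\mp C_{j+1/2}\right)$ is Harten's incremental form, your summation-and-reindexing step is his sufficiency proof, and your single-jump necessity argument is Tadmor's. Your derivation of the incremental form checks out (subtracting the fluxes and using \eqref{eq:defcou} indeed gives $U_j^{n+1}=U_j^n+B^+_{j+1/2}\Delta_{j+1/2}-B^-_{j-1/2}\Delta_{j-1/2}$), as does the difference recursion and the identification of the three surviving differences $B^+_{p+1/2}$, $1-B^+_{p+1/2}-B^-_{p+1/2}$, $B^-_{p+1/2}$ after one step on the data \eqref{eq:initjump}. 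The one place where your route genuinely differs from the paper's own style is the necessity step: in its proof of the multi-point analogue, Lemma~\ref{lem:TVDcondfds}, the paper uses the same single-jump device but concludes via \emph{monotonicity preservation} (each new difference must carry the sign of $\Delta^n_{p+1/2}$, and TVD implies monotonicity preservation by a cited result of Harten), whereas you extract the three sign constraints directly from $\mathrm{TV}^{n+1}\leq\mathrm{TV}^n$ through the equality case of the triangle inequality for three reals summing to $1$. Your version is slightly more economical since it avoids invoking the implication TVD $\Rightarrow$ monotonicity preserving as an external fact. Two small remarks: the sufficiency direction needs the per-interface bound $B^+_{j+1/2}+B^-_{j+1/2}\leq 1$ with \emph{both} coefficients at the same interface, which your formulation correctly supplies (this is where sloppier incremental-form statements go wrong); and your closing continuity appeal for the diagonal $U_{\mathrm{L}}=U_{\mathrm{R}}$ is the right touch, since TVD on actual data never exercises an interface with zero jump, so the condition there is only forced in the limit.
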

This condition was proved to be sufficient by Harten~\cite{har83} and necessary by Tadmor~\cite{tad84a}. By the results of Osher~\cite{osh84}, it can be seen that the 3-point classical Godunov scheme satisfies this condition for Courant numbers less than one.

We will now prove analogous results for LTS methods.

\begin{lem}\label{lem:TVDcondfds}
 A local multi-point conservative scheme in the form \eqref{eq:fdsform} is unconditionally TVD if and only if
\begin{subequations}\label{eq:TVDcondfds}
\begin{align}
  \frac{\Delta x}{\Delta t}-\mathcal{A}^{0+}_{j+1/2}+\mathcal{A}^{0-}_{j+1/2}&\geq 0, \\
  \mathcal{A}^{i+}_{j+1/2}-\mathcal{A}^{(i+1)+}_{j+1/2}&\geq 0\quad\forall i \geq 0, \\
  \mathcal{A}^{(i+1)-}_{j+1/2}-\mathcal{A}^{i-}_{j+1/2}&\geq 0\quad\forall i \geq 0,
\end{align}
\end{subequations}
for all $j$.
\end{lem}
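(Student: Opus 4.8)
The plan is to follow the classical route of Harten and Tadmor, but applied to the evolution of the interface differences rather than the cell averages. First I would compute, directly from the update \eqref{eq:fdsform}, the quantity $\Delta_{j+1/2}^{n+1}=U_{j+1}^{n+1}-U_j^{n+1}$ as a linear combination
\begin{equation}
  \Delta_{j+1/2}^{n+1}=\sum_{m}b^{\,j}_{m}\,\Delta_{m+1/2}^{n}
\end{equation}
of the old differences. Subtracting the formula for $U_j^{n+1}$ from that for $U_{j+1}^{n+1}$ and shifting the summation index $i\mapsto i+1$ in the appropriate sums, each $\Delta_{m+1/2}^n$ picks up contributions of three types: a diagonal term $m=j$, the left-going terms with $m=j-i$, and the right-going terms with $m=j+i$ for $i\ge1$. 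The key point of the bookkeeping is that, by locality, every coefficient contributing to $b^{\,j}_{m}$ is evaluated at the single interface $m+1/2$, so that $b^{\,j}_m$ depends only on $(U_m,U_{m+1})$.

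The second step is to read off that the three inequalities \eqref{eq:TVDcondfds} are precisely the statement that all $b^{\,j}_m\ge0$. Indeed, the diagonal coefficient equals $\frac{\Delta t}{\Delta x}\bigl(\frac{\Delta x}{\Delta t}-\mathcal{A}^{0+}_{m+1/2}+\mathcal{A}^{0-}_{m+1/2}\bigr)$, the left-going coefficient at shift $i$ is proportional to $\mathcal{A}^{(i-1)+}_{m+1/2}-\mathcal{A}^{i+}_{m+1/2}$, and the right-going coefficient is proportional to $\mathcal{A}^{i-}_{m+1/2}-\mathcal{A}^{(i-1)-}_{m+1/2}$. For sufficiency I would then invoke Harten's incremental argument: when all $b^{\,j}_m\ge0$ the triangle inequality gives $|\Delta^{n+1}_{j+1/2}|\le\sum_m b^{\,j}_m|\Delta^n_{m+1/2}|$, and after interchanging the order of summation the result reduces to controlling the column sums $\sum_j b^{\,j}_m$. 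Because all coefficients in a fixed column sit at the interface $m+1/2$, the left- and right-going pieces telescope and, using the zero convention \eqref{eq:zeroconv} (or boundedness) to annihilate the tail, the column sum collapses to exactly $1$. Hence $\mathrm{TV}^{n+1}\le\mathrm{TV}^n$, which is \eqref{eq:defTVD}.

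For necessity I would test the scheme on the single-jump data \eqref{eq:initjump}, for which only $\Delta_{p+1/2}^n=U_{\mathrm R}-U_{\mathrm L}\neq0$. Then $\mathrm{TV}^{n+1}=|U_{\mathrm R}-U_{\mathrm L}|\sum_j|b^{\,j}_p|$ while $\mathrm{TV}^{n}=|U_{\mathrm R}-U_{\mathrm L}|$, and the coefficients $b^{\,j}_p$ depend only on the pair $(U_{\mathrm L},U_{\mathrm R})$. Since the telescoping identity $\sum_j b^{\,j}_p=1$ holds regardless of any sign condition (it uses only conservation and the zero convention), the TVD requirement $\sum_j|b^{\,j}_p|\le1$ forces $\sum_j|b^{\,j}_p|=\sum_j b^{\,j}_p$, i.e.\ every $b^{\,j}_p\ge0$. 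As $(U_{\mathrm L},U_{\mathrm R})$ is arbitrary, this is exactly \eqref{eq:TVDcondfds}, and one can check it reduces to Lemma~\ref{lem:harten} in the three-point case $k=1$.

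The main obstacle I anticipate is organizational rather than conceptual: carefully reindexing the doubly-infinite fluctuation sums after the subtraction so that the coefficients regroup by interface, and verifying that the telescoping of each column sum produces exactly $1$. Both steps hinge on two structural assumptions that must be used explicitly, namely the locality of Definition~\ref{def:local}, which confines every coefficient of a given column to a single cell interface, and the boundedness together with the zero convention \eqref{eq:zeroconv}, which kills the boundary term of the telescoping sum. Once these are in place, both directions of the equivalence follow from the elementary fact that a finite or absolutely convergent family of reals summing to $1$ has absolute values summing to $1$ if and only if all of them are nonnegative.
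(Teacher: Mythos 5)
Your proposal is correct, and its skeleton coincides with the paper's: the same differencing of \eqref{eq:fdsform} into a linear combination $\Delta^{n+1}_{j+1/2}=\sum_m b^j_m\Delta^n_{m+1/2}$ with each coefficient attached to the interface $m+1/2$, and the same single-jump data \eqref{eq:initjump} as the test case for necessity. Where you genuinely diverge is in both halves of the equivalence. For sufficiency the paper does not argue at all---it cites Jameson--Lax and Osher--Chakravarthy---whereas you give a self-contained Harten-style positive-coefficient argument: triangle inequality, interchange of (finite, by \eqref{eq:zeroconv}) sums, and the telescoping column-sum identity $\sum_j b^j_m=1$, which indeed collapses correctly since $\sum_{i\geq1}\bigl(\mathcal{A}^{(i-1)+}-\mathcal{A}^{i+}\bigr)_{m+1/2}=\mathcal{A}^{0+}_{m+1/2}$ and $\sum_{i\geq1}\bigl(\mathcal{A}^{i-}-\mathcal{A}^{(i-1)-}\bigr)_{m+1/2}=-\mathcal{A}^{0-}_{m+1/2}$ cancel against the diagonal term. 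For necessity the paper routes through monotonicity preservation (TVD implies monotonicity preserving, so each $\Delta^{n+1}_{j+1/2}$ must share the sign of the jump), while you work with the total variation directly: since $\sum_j b^j_p=1$ holds unconditionally, $\mathrm{TV}^{n+1}\leq\mathrm{TV}^n$ forces $\sum_j|b^j_p|\leq 1=\sum_j b^j_p$, hence all $b^j_p\geq0$; this avoids invoking the auxiliary TVD-implies-monotonicity-preserving lemma altogether. One small precision: the fact that every coefficient in a fixed column sits at the single interface $m+1/2$ is already built into the indexing of \eqref{eq:fdsform}; what locality (Definition~\ref{def:local}) buys you is specifically the necessity step, namely that the coefficients are functions of the arbitrary pair $(U_{\mathrm{L}},U_{\mathrm{R}})$ alone, so the single-jump experiment pins down the inequalities pointwise. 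Net effect: your argument is more elementary and self-contained than the paper's, at the cost of the reindexing bookkeeping you correctly flag.
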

\begin{proof}
The proof is a natural generalization of the 3-point proof of Tadmor~\cite{tad84a}. Differencing \eqref{eq:fdsform} we obtain
\begin{multline}
  \frac{\Delta x}{\Delta t}\Delta_{j+1/2}^{n+1}=\left(\frac{\Delta x}{\Delta t}-\mathcal{A}^{0+}+\mathcal{A}^{0-}\right)_{j+1/2}\Delta_{j+1/2}+\sum_{i=1}^{\infty}\left(\mathcal{A}^{i-}-\mathcal{A}^{(i-1)-}\right)_{j+1/2+i}\Delta_{j+1/2+i} \\ +\sum_{i=1}^{\infty}\left(\mathcal{A}^{(i-1)+}-\mathcal{A}^{i+}\right)_{j+1/2-i}\Delta_{j+1/2-i}.
\end{multline}
Consider now the initial condition~\eqref{eq:initjump}. We obtain
\begin{equation}
  \frac{\Delta x}{\Delta t}\Delta_{j+1/2}^{n+1}=\begin{cases} \left[\mathcal{A}^{i+}(U_{\mathrm{L}},U_{\mathrm{R}})-\mathcal{A}^{(i+1)+}(U_{\mathrm{L}},U_{\mathrm{R}})\right]\Delta_{p+1/2}^n & \text{for}\quad p=j+1+i\quad\forall i\geq 0 \\ \left[\frac{\Delta x}{\Delta t}-\mathcal{A}^{0+}(U_{\mathrm{L}},U_{\mathrm{R}})+\mathcal{A}^{0-}(U_{\mathrm{L}},U_{\mathrm{R}})\right]\Delta^n_{p+1/2} & \text{for}\quad p=j \\ \left[\mathcal{A}^{(i+1)-}(U_{\mathrm{L}},U_{\mathrm{R}})-\mathcal{A}^{i-}(U_{\mathrm{L}},U_{\mathrm{R}})\right]\Delta_{p+1/2}^n & \text{for}\quad p=j-1-i\quad\forall i\geq 0 \end{cases}.  
\end{equation}
Now, in order for the scheme to be {\em monotonicity preserving}, $\Delta^{n+1}_{j+1/2}$ must have the same sign as $\Delta^n_{p+1/2}$ for all $j$. Then the inequalities \eqref{eq:TVDcondfds} must hold for {\em arbitrary} $(U_{\mathrm{L}},U_{\mathrm{R}})$. Jameson and Lax~\cite{jam86,jam87} and Osher and Chakravarthy~\cite{osh86} proved that \eqref{eq:TVDcondfds} are {\em sufficient} TVD conditions. As a TVD scheme is also monotonicity preserving~\cite{har83}, the result follows. 
\end{proof}
\begin{prop}
  A local multi-point conservative scheme in the form \eqref{eq:viscform} is unconditionally TVD if and only if
\begin{subequations}\label{eq:TVDcond}
\begin{align}
  1-Q^0_{j+1/2}+Q^{1-}_{j+1/2}+Q^{1+}_{j+1/2}&\geq 0,\label{eq:Qa} \\
  Q^0_{j+1/2}-4Q^{1\pm}_{j+1/2}+2Q^{2\pm}_{j+1/2}\mp C_{j+1/2}&\geq 0,\label{eq:Qrb} \\
  Q^{i\pm}_{j+1/2}-2Q^{(i+1)\pm}_{j+1/2}+Q^{(i+2)\pm}_{j+1/2} &\geq 0\quad\forall i\geq 1 \label{eq:Qri} 
\end{align}
\end{subequations}
for all $j$.
\end{prop}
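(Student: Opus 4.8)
The plan is to exploit the one-to-one correspondence \eqref{eq:Q2A}--\eqref{eq:A2Q} between the FDS coefficients $\mathcal{A}$ and the viscosity coefficients $Q$. Since this correspondence is a bijection describing the very same scheme, the scheme is unconditionally TVD in the viscosity formulation \eqref{eq:viscform} if and only if it is unconditionally TVD in the FDS formulation \eqref{eq:fdsform}. Hence by Lemma~\ref{lem:TVDcondfds} it suffices to substitute \eqref{eq:Q2A} into the inequalities \eqref{eq:TVDcondfds} and collect terms, verifying that the resulting conditions on $Q$ are precisely \eqref{eq:TVDcond}. No new analytic input is required beyond this algebraic translation.

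I would organize the substitution into three blocks matching the three lines of \eqref{eq:TVDcondfds}. First, for the central inequality $\tfrac{\Delta x}{\Delta t}-\mathcal{A}^{0+}_{j+1/2}+\mathcal{A}^{0-}_{j+1/2}\geq 0$, inserting the expressions for $\mathcal{A}^{0\pm}$ makes the two $C$ contributions cancel, and after dividing by the positive factor $\tfrac{\Delta x}{\Delta t}$ one obtains exactly \eqref{eq:Qa}. Second, the two $i=0$ instances of the remaining families, $\mathcal{A}^{0+}-\mathcal{A}^{1+}\geq 0$ and $\mathcal{A}^{1-}-\mathcal{A}^{0-}\geq 0$, are the delicate ones: here $\mathcal{A}^{0\pm}$ carries the Courant term $C$ and the factor $\tfrac12$, and combining it with the telescoping $\mathcal{A}^{1\pm}$ produces the mixed-sign inequalities \eqref{eq:Qrb}, the upper sign arising from the right-going family and the lower sign from the left-going family. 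Third, for $i\geq 1$ both $\mathcal{A}^{i\pm}$ are simple differences of consecutive $Q$'s, so the differences $\mathcal{A}^{i+}-\mathcal{A}^{(i+1)+}$ and $\mathcal{A}^{(i+1)-}-\mathcal{A}^{i-}$ telescope into the discrete second differences $Q^{i\mp}-2Q^{(i+1)\mp}+Q^{(i+2)\mp}$, giving \eqref{eq:Qri}.

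Throughout, I would invoke the convention \eqref{eq:zeroconv} that $Q^{i\pm}=\mathcal{A}^{i\pm}=0$ for $i\geq k$, so that all the infinite families in \eqref{eq:TVDcondfds} and \eqref{eq:TVDcond} truncate consistently: for large $i$ the FDS inequalities become $0\geq 0$, while the corresponding second-difference conditions on $Q$ reduce to sign requirements on the last nonvanishing coefficients, matching on both sides. The main---though purely bookkeeping---obstacle is keeping the signs and the $\pm$/$\mp$ conventions aligned across the $i=0$ boundary, where the factor $\tfrac12$ in $\mathcal{A}^{0\pm}$ and the asymmetric appearance of $C$ mean the right- and left-going cases are not interchangeable; careful tracking here is what produces the $\mp C$ in \eqref{eq:Qrb} with the correct orientation. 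Since each substitution is an equivalence, the ``if and only if'' is inherited directly from Lemma~\ref{lem:TVDcondfds}.
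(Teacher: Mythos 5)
Your proposal is correct and is essentially identical to the paper's argument: the paper's proof is the one-line remark that the result follows from Lemma~\ref{lem:TVDcondfds} and the transformation \eqref{eq:Q2A}, which your substitution simply carries out in detail, including the correct truncation via \eqref{eq:zeroconv}. One minor labeling slip in your second block: since $\mathcal{A}^{i+}$ is built from the $Q^{i-}$ coefficients (and $\mathcal{A}^{i-}$ from $Q^{i+}$), the right-going condition $\mathcal{A}^{0+}-\mathcal{A}^{1+}\geq 0$ actually yields the lower-sign inequality $Q^0-4Q^{1-}+2Q^{2-}+C\geq 0$ and the left-going one the upper sign --- the reverse of your attribution --- but since both sign cases of \eqref{eq:Qrb} are required anyway, this does not affect the validity of the equivalence.
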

\begin{proof}
The result follows from Lemma~\ref{lem:TVDcondfds} and the transformation \eqref{eq:Q2A}. 
\end{proof}
\begin{lem}\label{lem:LFineq}
  The viscosity coefficients of a local $(2k+1)$ point TVD scheme satisfy the inequalities
\begin{subequations}
\begin{gather}
  Q^0\leq k,\label{eq:Q0ineq} \\
  Q^{i\pm}\leq \frac{k-i}{2k}\left(\mp C+k\right)\label{eq:Qiineq}\quad\mathrm{for}\quad i=1,\ldots,k-1.
\end{gather}
\end{subequations}
These are either all strict inequalities or all equalities.
\end{lem}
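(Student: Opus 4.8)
The plan is to pass to the flux-difference-splitting picture, where the TVD constraints become plain monotonicity statements, and then reduce everything to an elementary inequality for two monotone nonnegative sequences. Writing $\nu=\Delta t/\Delta x$, I introduce the normalized fluctuations $\alpha_p=\nu\mathcal{A}^{p+}$ and $\beta_p=-\nu\mathcal{A}^{p-}$ for $p=0,\dots,k-1$ (both $0$ for $p\geq k$). By Lemma~\ref{lem:TVDcondfds} these sequences are nonnegative and non-increasing in $p$, and the first inequality in \eqref{eq:TVDcondfds}, multiplied by $\nu$, reads $\alpha_0+\beta_0\leq 1$. The transformation \eqref{eq:A2Q} then gives the clean identities $Q^0=\sum_{p=0}^{k-1}(\alpha_p+\beta_p)$, $Q^{i+}=\sum_{p=i}^{k-1}\beta_p$ and $Q^{i-}=\sum_{p=i}^{k-1}\alpha_p$, while consistency of the numerical flux (equivalently, summing \eqref{eq:A2Q}) yields $\sum_{p=0}^{k-1}(\alpha_p-\beta_p)=C$ with $C$ as in \eqref{eq:defcou}. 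Everything to be proved is now a statement about these two sequences. The bound $Q^0\leq k$ is immediate: since $\alpha_p\leq\alpha_0$ and $\beta_p\leq\beta_0$, each summand obeys $\alpha_p+\beta_p\leq\alpha_0+\beta_0\leq1$, and there are $k$ of them.

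For the off-diagonal bounds I treat $Q^{i+}$; the $Q^{i-}$ case then follows by interchanging $\alpha\leftrightarrow\beta$ and $C\to-C$. Set $T=Q^{i+}=\sum_{p=i}^{k-1}\beta_p$. Using $\alpha_p\leq\alpha_0\leq 1-\beta_0$ together with the consistency identity, I obtain $C+\sum_p\beta_p=\sum_p\alpha_p\leq k\alpha_0\leq k(1-\beta_0)$, that is, $\sum_{p=0}^{k-1}\beta_p+k\beta_0\leq k-C$. The crux is to bound the left-hand side below in terms of $T$ alone. Monotonicity gives $\beta_p\geq\beta_i$ for $0\leq p\leq i$, and $T\leq(k-i)\beta_i$ gives $\beta_i\geq T/(k-i)$, so that $\sum_{p=0}^{k-1}\beta_p+k\beta_0\geq i\beta_i+T+k\beta_i=T+(k+i)\beta_i\geq T+(k+i)\frac{T}{k-i}=\frac{2k}{k-i}\,T$. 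Combining the two estimates yields $\frac{2k}{k-i}T\leq k-C$, which is exactly $Q^{i+}\leq\frac{k-i}{2k}(k-C)$ for $i=1,\dots,k-1$.

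The anticipated obstacle is obtaining the sharp constant $\frac{k-i}{2k}$ rather than a weaker one: keeping only the $k\beta_0$ term and discarding the low-index summands $\beta_0,\dots,\beta_{i-1}$ would give only $Q^{i+}\leq\frac{k-i}{2k-i}(k-C)$, so it is essential to retain those $i$ extra terms and estimate each of them from below by $\beta_i$. All the remaining steps are routine monotonicity and telescoping.

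For the dichotomy I would trace the equality cases. Equality in $Q^0\leq k$ forces $\alpha_p+\beta_p\equiv1$, and since $\alpha$ and $\beta$ are each non-increasing this forces both to be constant; equality in any of the $Q^{i+}$ or $Q^{i-}$ bounds forces every inequality in the chain above to be tight, again yielding constant $\alpha,\beta$ with $\alpha+\beta=1$. In each case one is driven to the single flat configuration $\alpha_p\equiv(k+C)/(2k)$, $\beta_p\equiv(k-C)/(2k)$, fixed uniquely by $\alpha+\beta=1$ and $\sum_p(\alpha_p-\beta_p)=C$; this is precisely the LTS-LxF scheme, and for it $Q^0=k$, $Q^{i+}=(k-i)(k-C)/(2k)$ and $Q^{i-}=(k-i)(k+C)/(2k)$, so all stated bounds hold with equality. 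Hence equality in one bound forces equality in all of them, and the contrapositive gives the claimed all-strict-or-all-equal alternative.
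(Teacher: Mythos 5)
Your proof is correct, but it takes a genuinely different route from the paper's. The paper stays entirely in the viscosity ($Q$) formulation: from the second-difference TVD condition \eqref{eq:Qri} together with the cutoff \eqref{eq:zeroconv} it derives by induction $Q^{i\pm}\leq\frac{k-i}{k+1-i}Q^{(i-1)\pm}$, eliminates $Q^{2\pm}$ from \eqref{eq:Qrb} to get the key intermediate bound $\frac{2k}{k-i}Q^{i\pm}\leq\mp C+Q^0$, feeds this into \eqref{eq:Qa} to obtain $Q^0\leq k$, and then traces the equality dichotomy back through the same chain of inequalities. You instead translate everything into the flux-difference-splitting picture via \eqref{eq:A2Q}, where by Lemma~\ref{lem:TVDcondfds} and \eqref{eq:zeroconv} the normalized fluctuations $\alpha_p,\beta_p$ form nonnegative non-increasing sequences with $\alpha_0+\beta_0\leq 1$ and $\sum_p(\alpha_p-\beta_p)=C$ (the consistency identity you invoke does follow, e.g.\ by telescoping \eqref{eq:Q2A}); the bounds then reduce to elementary inequalities for monotone sequences, and I verified your chain $\sum_p\beta_p+k\beta_0\geq T+(k+i)\beta_i\geq\frac{2k}{k-i}T$ against $\sum_p\beta_p+k\beta_0\leq k-C$, which gives exactly the sharp constant. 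What each approach buys: yours makes the equality analysis more transparent and conceptual — tightness visibly forces the flat configuration $\alpha_p\equiv(k+C)/(2k)$, $\beta_p\equiv(k-C)/(2k)$, which is recognizably the LTS-LxF scheme of Lemma~\ref{lem:QLxF}, so the extremal scheme emerges from the proof rather than being checked afterwards; the paper's version, by contrast, produces intermediate inequalities (\eqref{eq:indres} and \eqref{eq:Q1temp}) that are reused verbatim in the proof of the companion lower-bound result, Lemma~\ref{lem:roelim}, so it integrates more economically with the rest of Section~\ref{sec:TVD}. One small point of rigor worth making explicit: nonnegativity of $\alpha_p,\beta_p$ is not literally part of \eqref{eq:TVDcondfds} but follows from the monotonicity there combined with the vanishing convention \eqref{eq:zeroconv}, as your parenthetical remark implicitly uses.
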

\begin{proof}
Using \eqref{eq:zeroconv}, we get from induction on \eqref{eq:Qri}  that
\begin{equation}\label{eq:indres}
  Q^{i\pm}\leq\frac{k-i}{k+1-i}Q^{(i-1)\pm}\quad\text{for}\quad\text{for}\quad i\geq 2.   
\end{equation}
Using this, we can eliminate $Q^{2\pm}$ from \eqref{eq:Qrb} to obtain from \eqref{eq:indres}:
\begin{equation}
  \frac{2k}{k-i}Q^{i\pm}\leq\mp C+Q^0\quad\text{for}\quad i=1,\ldots,k-1.\label{eq:Q1temp}
\end{equation}
Using this in \eqref{eq:Qa} we recover \eqref{eq:Q0ineq}. Then \eqref{eq:Q1temp} gives us \eqref{eq:Qiineq} for $i=1,\ldots,k-1$.

Assume that equality holds in \eqref{eq:Q0ineq}. Then \eqref{eq:Qa} and \eqref{eq:Q1temp} give equality for $i=1$ in \eqref{eq:Qiineq}. Similarly, \eqref{eq:Qrb} and \eqref{eq:Q1temp} give equality for $i=2$ in \eqref{eq:Qiineq}. Repeated use of \eqref{eq:Qri} and \eqref{eq:Qiineq} impose the remaining equalities. Hence equality in \eqref{eq:Q0ineq} implies equality in \eqref{eq:Qiineq} for all $i=1,\ldots,k-1$.

Assume that equality holds for some $Q^{i\pm}$ in \eqref{eq:Qiineq}. Then from \eqref{eq:Q1temp} we obtain
\begin{equation}
  k\leq Q^0,
\end{equation}
which imposes equality in \eqref{eq:Q0ineq} and hence in \eqref{eq:Qiineq} for all $i=1,\ldots,k-1$.
\end{proof}
Note in particular that it follows from \eqref{eq:indres} and \eqref{eq:zeroconv} that
\begin{equation}\label{eq:posQ}
  Q^{i\pm}\geq 0\quad\text{for}\quad i\geq 1.  
\end{equation}

\begin{lem}\label{lem:roelim}

The viscosity coefficients of a local $(2k+1)$ point TVD scheme satisfy the inequalities
\begin{subequations}
\begin{gather}
 Q^0\geq |C|,\label{eq:R1} \\
 Q^{i\pm} \geq\max\left(0,\mp C-i\right).\label{eq:R2}
\end{gather}
\end{subequations}
\end{lem}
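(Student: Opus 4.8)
The plan is to pass to the flux-difference-splitting (FDS) picture, where the unconditional TVD conditions of Lemma~\ref{lem:TVDcondfds} become sign and monotonicity statements that are far more convenient for extracting \emph{lower} bounds than the viscosity-form conditions \eqref{eq:TVDcond}. As a local shorthand write $a^{p\pm}=\frac{\Delta t}{\Delta x}\mathcal{A}^{p\pm}$, so that the transformation \eqref{eq:A2Q} reads $Q^{i-}=\sum_{p\geq i}a^{p+}$, $Q^{i+}=-\sum_{p\geq i}a^{p-}$ and $Q^0=\sum_{p\geq 0}(a^{p+}-a^{p-})$, all sums being finite by the convention \eqref{eq:zeroconv}.

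First I would extract uniform bounds on the fluctuation coefficients. The two monotonicity conditions in \eqref{eq:TVDcondfds} say that $a^{i+}$ is nonincreasing and $a^{i-}$ is nondecreasing in $i$; since both vanish for $i\geq k$, this forces $a^{i+}\geq 0$ and $a^{i-}\leq 0$ for every $i$ (which re-proves \eqref{eq:posQ}). The remaining condition $1-a^{0+}+a^{0-}\geq 0$, used together with $a^{0-}\leq 0$, gives $a^{0+}\leq 1$, and used together with $a^{0+}\geq 0$, gives $a^{0-}\geq -1$; propagating these through the monotonicity yields the two-sided bounds $0\leq a^{i+}\leq 1$ and $-1\leq a^{i-}\leq 0$ for all $i$. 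These uniform bounds are the engine of the proof.

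Next I would prove \eqref{eq:R1}. Combining the $\mathcal{A}^{0\pm}$ lines of \eqref{eq:Q2A} with $Q^{1-}=\sum_{p\geq 1}a^{p+}$ and $Q^{1+}=-\sum_{p\geq 1}a^{p-}$ collapses the telescopes to the identities $\sum_{p\geq 0}a^{p+}=\tfrac{1}{2}(C+Q^0)$ and $\sum_{p\geq 0}a^{p-}=\tfrac{1}{2}(C-Q^0)$. The left-hand sides are respectively $\geq 0$ and $\leq 0$ by the sign bounds, so $C+Q^0\geq 0$ and $C-Q^0\leq 0$, i.e.\ $Q^0\geq -C$ and $Q^0\geq C$, which is \eqref{eq:R1}.

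Finally I would obtain \eqref{eq:R2} by peeling off the first $i$ terms of these telescoped sums. For the minus coefficients, $Q^{i-}=\tfrac{1}{2}(C+Q^0)-\sum_{p=0}^{i-1}a^{p+}$; bounding the already-proved $\tfrac{1}{2}(C+Q^0)\geq C$ from below and each of the $i$ discarded terms by $a^{p+}\leq 1$ gives $Q^{i-}\geq C-i$, whence $Q^{i-}\geq\max(0,C-i)$ after recalling $Q^{i-}\geq 0$. The mirror computation, $Q^{i+}=\tfrac{1}{2}(Q^0-C)+\sum_{p=0}^{i-1}a^{p-}$ with $\tfrac{1}{2}(Q^0-C)\geq -C$ and $a^{p-}\geq -1$, gives $Q^{i+}\geq\max(0,-C-i)$, completing \eqref{eq:R2}. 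The only points demanding care are the order of the argument --- the $Q^{i\pm}$ bounds genuinely invoke $Q^0\geq|C|$, so \eqref{eq:R1} must come first --- and the uniform bound $a^{i\pm}\in[-1,1]$, which is precisely what converts ``$i$ discarded terms'' into the additive ``$-i$'' and so is the step I would justify most carefully.
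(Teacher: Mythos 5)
Your proof is correct, and it takes a genuinely different route from the paper's. The paper never leaves the viscosity variables: it works directly from the $Q$-form TVD conditions \eqref{eq:TVDcond}, reuses the inequality $\frac{2k}{k-i}Q^{i\pm}\leq\mp C+Q^0$ already established in the proof of Lemma~\ref{lem:LFineq}, derives the intermediate bound $Q^{1\pm}\geq\frac{k+1}{2k}Q^0-1\mp\frac{k-1}{2k}C$, and then closes \eqref{eq:R2} by an induction on $i$ driven by the second-difference conditions \eqref{eq:Qri} and \eqref{eq:Qrb}. You instead pass to the FDS picture of Lemma~\ref{lem:TVDcondfds}, where the TVD conditions become monotonicity statements that, combined with \eqref{eq:zeroconv}, immediately pin each fluctuation coefficient into $a^{i+}\in[0,1]$, $a^{i-}\in[-1,0]$; the telescoped identities $\sum_{p\geq 0}a^{p+}=\tfrac{1}{2}(C+Q^0)$ and $\sum_{p\geq 0}a^{p-}=\tfrac{1}{2}(C-Q^0)$ (which I verified against \eqref{eq:Q2A} and \eqref{eq:A2Q}, and which are really just consistency, since $\sum_p(a^{p+}+a^{p-})=C$) then give \eqref{eq:R1} from the signs alone, and \eqref{eq:R2} by peeling off at most $i$ terms each bounded by $1$ in modulus. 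Your version buys several things: it is $k$-independent (no $\frac{k-1}{2k}$ bookkeeping and no induction), it cleanly separates the order of logic you flag ($Q^0\geq|C|$ first, then the peeling), and the uniform bounds $a^{i+}\in[0,1]$, $a^{i-}\in[-1,0]$ have a transparent meaning --- they are exactly the values saturated by LTS-Roe in \eqref{eq:mcARoe}, so your argument makes visible \emph{why} LTS-Roe is extremal. As a side benefit, applying your bound $Q^{i-}\geq C-i$ at $i=k$, where $Q^{k\pm}=0$ by \eqref{eq:zeroconv}, yields the necessary CFL condition $|C|\leq k$ of \eqref{eq:genTVD} directly, whereas the paper obtains it only by combining this lemma with Lemma~\ref{lem:LFineq}. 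What the paper's route buys in exchange is that it stays in the variables in which the lemma is stated and shares its infrastructure with the matching upper-bound result, so the two extremal lemmas read as one computation.
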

\begin{proof}
From \eqref{eq:Qri} and \eqref{eq:Q1temp} with $i=2$ we obtain
\begin{equation}
  Q^0\geq\frac{2k}{k-1}Q^{1\pm}\pm C,
\end{equation}
which by \eqref{eq:posQ} proves~\eqref{eq:R1}. Setting $i=1$ in \eqref{eq:Qiineq} and combining with \eqref{eq:Qa} we obtain
\begin{equation}\label{eq:S532}
  Q^{1\pm}\geq Q^0-1-Q^{1\mp}\geq Q^0-1-\frac{k-1}{2k}\left(Q^0\pm C\right)=\frac{k+1}{2k}Q^0-1\mp\frac{k-1}{2k}C.
\end{equation}
Furthermore, repeated application of \eqref{eq:Qri} gives
\begin{equation}
  Q^{(i+1)\pm}-Q^{i\pm}\geq Q^{2\pm}-Q^{1\pm}\quad\text{for}\quad i\geq 1.
\end{equation}
Combining this with \eqref{eq:Qrb} and \eqref{eq:S532} we obtain
\begin{equation}
  Q^{(i+1)\pm}\geq Q^{i\pm}+Q^{1\pm}-\frac{Q^0\mp C}{2}\geq Q^{i\pm}+\frac{1}{2k}\left(Q^0\pm C\right)-1\quad\text{for}\quad i\geq 1.\label{eq:S533}
\end{equation}
From \eqref{eq:R1} and \eqref{eq:S532} we now get
\begin{equation}
  Q^{1\pm}\geq\frac{k-1}{2k}\left(|C|\mp C\right)-1+\frac{|C|}{k}\geq\mp C-1.
\end{equation}
Assume now that $Q^{i\pm}\geq \mp C-i$. Then we get from \eqref{eq:R1} and \eqref{eq:S533} that
\begin{equation}
  Q^{(i+1)\pm}\geq Q^{i\pm}+\frac{1}{2k}\left(|C|\pm C\right)-1\geq\mp C-(i+1)+\frac{1}{2k}\left(|C|\pm C\right)\geq\mp C-(i+1),
\end{equation}
proving
\begin{equation}
  Q^{i\pm}\geq \mp C-i\quad\text{for}\quad i\geq 1
\end{equation}
by induction. Now \eqref{eq:R2} follows from \eqref{eq:posQ}.
\end{proof}
Observe that \eqref{eq:Q0ineq} and \eqref{eq:R1} provide the necessary CFL condition for $(2k+1)$-point local schemes to be TVD:
\begin{equation}\label{eq:genTVD}
  \max\limits_j\left|C_{j+1/2}\right|\leq k.
\end{equation}
We will now proceed to show that our upper and lower bounds on the numerical viscosity coefficients define the natural Large Time Step extensions of the Lax-Friedrichs and the Roe scheme respectively.

\subsection{The Large Time Step Lax-Friedrichs scheme}

According to Lemma~\ref{lem:harten}, the largest possible viscosity coefficient for 3-point TVD schemes is 
\begin{equation}
  Q_{j+1/2}^n=1.
\end{equation}
Substituting this into \eqref{eq:3visc}, we recover the classical {\em Lax-Friedrichs} scheme:
\begin{equation}
   U_j^{n+1}-\frac{1}{2}\left(U_{j-1}^n+U_{j+1}^n\right)+\frac{\Delta t}{2\Delta x}\left(f(U_{j+1}^n)-f(U_{j-1}^n)\right)=0.
\end{equation}
This has a natural $(2k+1)$ point generalization.
\begin{defn}
The $(2k+1)$ point numerical scheme given by
\begin{equation}\label{eq:defLTSLxF}
  U_j^{n+1}-\frac{1}{2}\left(U_{j-k}^n+U_{j+k}^n\right)+\frac{\Delta t}{2k\Delta x}\left(f(U_{j+k}^n)-f(U_{j-k}^n)\right)=0
\end{equation}
will be denoted as the {\bf LTS-LxF} scheme.
\end{defn}

\begin{lem}\label{lem:QLxF}
The LTS-LxF scheme can be written as a local conservative $(2k+1)$ point scheme with viscosity coefficients
\begin{subequations}\label{eq:LFQ}
\begin{gather}
  Q^0=k,\label{eq:LF1} \\
  Q^{i\pm}=\frac{k-i}{2k}\left(\mp C+k\right)\quad\mathrm{for}\quad i=1,\ldots,k-1.\label{eq:LF2}
\end{gather}
\end{subequations}
\end{lem}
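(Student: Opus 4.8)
The statement asserts the existence of such a representation, so the plan is to substitute the candidate coefficients~\eqref{eq:LFQ} into the viscosity flux~\eqref{eq:viscform} and verify that the resulting conservative update~\eqref{eq:FV1} collapses to the defining form~\eqref{eq:defLTSLxF}. Before the main computation I would dispatch the structural points: the coefficients in~\eqref{eq:LFQ} depend on $(U_j,U_{j+1})$ only through the interface Courant number $C_{j+1/2}$, which by~\eqref{eq:defcou} is itself a function of $(U_j,U_{j+1})$ alone, so the scheme is local in the sense of Definition~\ref{def:local} and bounded on bounded data; the convention~\eqref{eq:zeroconv} holds since the formula for $Q^{i\pm}$ already vanishes at $i=k$; and consistency~\eqref{eq:consistent} is immediate because all $\Delta_{m+1/2}$ vanish on constant data, leaving $F_{j+1/2}=\tfrac12(f_j+f_{j+1})=f(U)$.

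The key algebraic observation is that, by~\eqref{eq:defcou}, $C_{m+1/2}\Delta_{m+1/2}=\tfrac{\Delta t}{\Delta x}(f_{m+1}-f_m)$ (both sides vanishing when $U_m=U_{m+1}$). Using this, each viscous term $Q^{i\pm}_{m+1/2}\Delta_{m+1/2}$ in~\eqref{eq:viscform} separates into a part built from the flux difference $f_{m+1}-f_m$ and a part built from the state difference $\Delta_{m+1/2}$, each carrying the triangular weight $\tfrac{k-i}{2k}$. I would then write $F_{j+1/2}$ as the central average $\tfrac12(f_j+f_{j+1})$, plus the $Q^0$-term $-\tfrac{k}{2}\tfrac{\Delta x}{\Delta t}\Delta_{j+1/2}$, plus two weighted sums over $i=1,\dots,k-1$, and form $F_{j+1/2}-F_{j-1/2}$. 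The claim is that the flux-difference contributions telescope, by an Abel summation, from the triangular weights $\tfrac{k-i}{2k}$ into the flat weight $\tfrac{1}{2k}$ over the $2k$ interfaces between cells $j-k$ and $j+k$, yielding $\tfrac{1}{2k}\bigl(f(U_{j+k})-f(U_{j-k})\bigr)$, while the state-difference contributions collapse to $\tfrac{\Delta x}{\Delta t}\bigl[\tfrac12(U_j-U_{j-k})-\tfrac12(U_{j+k}-U_j)\bigr]$. Inserting these into~\eqref{eq:FV1} reproduces~\eqref{eq:defLTSLxF} exactly.

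The main obstacle is purely the bookkeeping of this collapse: verifying that the weights $\tfrac{k-i}{2k}$ are precisely those for which the neighbouring, oppositely oriented sums in $F_{j+1/2}-F_{j-1/2}$ cancel down to the constant-weight Lax--Friedrichs stencil. This is a routine rearrangement---one checks directly that for each flux index $m\in\{j-k,\dots,j+k-1\}$ the coefficient accumulates to $\tfrac1{2k}$, and similarly for the state differences---and is most cleanly organised as an induction on $k$. As an alternative route that avoids the telescoping, one may instead apply~\eqref{eq:defLTSLxF} to the Riemann data~\eqref{eq:initjump}: since~\eqref{eq:defLTSLxF} reads only the stencil endpoints $U_{j\pm k}$, the quantities $(H-U_j^n)/(U_{\mathrm R}-U_{\mathrm L})$ are immediate and depend on $(U_{\mathrm L},U_{\mathrm R})$ only, so the characterisation~\eqref{eq:viscuniq} of the uniqueness proposition applies and fixes the $Q^{i\pm}$ through a two-term recursion integrated downward from $Q^{k\pm}=0$ to give~\eqref{eq:LFQ}. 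Finally I would note that~\eqref{eq:LFQ} are exactly the equality cases of the upper bounds in Lemma~\ref{lem:LFineq}, so the present lemma identifies LTS-LxF as the maximally diffusive local TVD scheme.
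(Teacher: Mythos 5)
Your proposal is correct and takes essentially the same route as the paper, whose entire proof is the one-line instruction to substitute the coefficients \eqref{eq:LFQ} into the scheme (via \eqref{eq:fdsform} and \eqref{eq:FV}) and recover \eqref{eq:defLTSLxF}; your telescoping computation in the viscosity formulation \eqref{eq:viscform}, splitting each term $Q^{i\pm}_{m+1/2}\Delta_{m+1/2}$ via $C_{m+1/2}\Delta_{m+1/2}=\tfrac{\Delta t}{\Delta x}\left(f_{m+1}-f_m\right)$, is just that substitution carried out explicitly, and it checks out. The extra observations (locality, consistency with \eqref{eq:zeroconv}, and the alternative uniqueness argument via \eqref{eq:viscuniq}) are sound but beyond what the paper records.
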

\begin{proof}
  Substitute \eqref{eq:LFQ} into \eqref{eq:fdsform} and then into \eqref{eq:FV} to recover \eqref{eq:defLTSLxF}.
\end{proof}

\begin{prop}
  The local $(2k+1)$-point TVD scheme with the largest numerical viscosity \eqref{eq:defD} is the LTS-LxF scheme.
\end{prop}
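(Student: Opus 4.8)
The plan is to treat this proposition as a direct consequence of the two preceding lemmas, so the work here is one of assembly rather than fresh computation. First I would invoke the remark following \eqref{eq:defD}: apart from the fixed term $-c^2$, the numerical diffusion $D(u)$ is a positive linear combination of the diagonal viscosity coefficients, carrying weight $1$ on $\bar Q^0$ and weight $2$ on each $\bar Q^{i\pm}$. Consequently $D$ is strictly increasing in each of these coefficients, and the task of maximizing $D$ over all local $(2k+1)$-point TVD schemes reduces to maximizing each diagonal coefficient, provided their individual maxima can be attained at once.

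Next I would specialize Lemma~\ref{lem:LFineq} to the diagonal $U_{\mathrm L}=U_{\mathrm R}=u$, where the signed interface Courant number $C$ coincides with $c$ from \eqref{eq:contcou}. This furnishes, for any local TVD scheme, the pointwise bounds $\bar Q^0\le k$ from \eqref{eq:Q0ineq} and $\bar Q^{i\pm}\le\frac{k-i}{2k}(\mp c+k)$ from \eqref{eq:Qiineq} for $i=1,\ldots,k-1$. Substituting these into \eqref{eq:defD} and summing is routine: using $\bar Q^{i+}+\bar Q^{i-}\le k-i$ and $\sum_{i=1}^{k-1}(k-i)=\tfrac12 k(k-1)$, one obtains
\begin{equation*}
  D(u)\le k-c^2+\sum_{i=1}^{k-1}2(k-i)=k^2-c^2.
\end{equation*}

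It then remains to exhibit a TVD scheme realizing this bound, which is exactly Lemma~\ref{lem:QLxF}: the LTS-LxF scheme has $\bar Q^0=k$ and $\bar Q^{i\pm}=\frac{k-i}{2k}(\mp c+k)$, so every inequality above holds with equality simultaneously and $D(u)=k^2-c^2$ for LTS-LxF. Together with the monotonicity of $D$, this establishes that LTS-LxF attains the maximal numerical viscosity. For the definite-article form of the statement I would finally appeal to the dichotomy in Lemma~\ref{lem:LFineq}, namely that \eqref{eq:Q0ineq}--\eqref{eq:Qiineq} hold either as all equalities or as all strict inequalities: this forces any scheme reaching $D(u)=k^2-c^2$ to have all diagonal coefficients at their bounds, i.e.\ to coincide on the diagonal with LTS-LxF. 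I do not expect a genuine obstacle in this argument; the entire difficulty of the result is already absorbed into the derivation of the upper bounds in Lemma~\ref{lem:LFineq}, and what remains here is only the linear bookkeeping that combines those bounds with the explicit LxF coefficients.
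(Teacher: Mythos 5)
Your argument follows essentially the same route as the paper: monotonicity of $D$ in each coefficient, the upper bounds of Lemma~\ref{lem:LFineq}, and the identification of those bounds with the LTS-LxF coefficients via Lemma~\ref{lem:QLxF}. Your explicit summation to $D(u)\le k^2-c^2$ and your appeal to the all-equalities-or-all-strict dichotomy for the ``definite article'' are harmless elaborations; the paper states the value $k^2-c^2$ only later, in \eqref{eq:DLxF} and \eqref{eq:sharpD}, and its proof does not address uniqueness at all (your caveat that equality only pins the scheme down \emph{on the diagonal} is correct and appropriately modest).

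There is, however, one step you skip that Lemma~\ref{lem:QLxF} does not supply: that lemma only writes LTS-LxF as a local conservative scheme with coefficients \eqref{eq:LFQ}; it does not assert that the scheme is TVD. Your phrase ``exhibit a TVD scheme realizing this bound, which is exactly Lemma~\ref{lem:QLxF}'' therefore presumes exactly what must be checked, and without it you have only an upper bound on $D$ over the TVD class, not attainment within it. The paper's proof opens with precisely this verification: under the necessary CFL condition \eqref{eq:genTVD}, the coefficients \eqref{eq:LFQ} satisfy the TVD conditions \eqref{eq:TVDcond}. The check is routine --- substituting \eqref{eq:LFQ} into \eqref{eq:Qa}--\eqref{eq:Qri} yields equalities except at the boundary index $i=k-1$ of \eqref{eq:Qri}, where the condition reduces to $|C|\le k$ --- but it must be stated for the proposition to be established.
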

\begin{proof}
  Observe that under the necessary condition \eqref{eq:genTVD}, the coefficients \eqref{eq:LFQ} satisfy the TVD conditions \eqref{eq:TVDcond}. To increase $D(u)$ in \eqref{eq:defD}, we would need to increase at least one of the $Q$ of \eqref{eq:LFQ}. According to Lemma~\ref{lem:LFineq}, this is impossible without violating the TVD property.
\end{proof}

\subsection{The Large Time Step Roe scheme}

Recall that the classical Godunov's method consists in projecting the exact solution \eqref{eq:oshu} of non-interacting Riemann problems back to the computational grid. One may also envisage the method used in the context of {\em approximate Riemann solvers} where some approximate solution is used in place of \eqref{eq:oshu}. One such method, commonly attributed to Roe~\cite{roe81}, consists of replacing the original nonlinear problem \eqref{eq:scalarcons} with a local linear problem
\begin{equation}\label{eq:Roelin}
  u_t+\hat{A}_{j+1/2}u_x=0  
\end{equation}
at each cell interface. Then, if $\hat{A}$ satisfies the condition
\begin{equation}\label{eq:condroe}
  \hat{A}_{j+1/2}\cdot(U_{j+1}-U_j)=f(U_{j+1})-f(U_j),
\end{equation}
setting $\hat{u}_{j+1/2}(x,t)$ to be the solution to the Riemann problem \eqref{eq:ciriemann} for \eqref{eq:Roelin} yields a conservative approximation to Godunov's method. For scalar equations, this is also known as Murman's method~\cite{lev02,mur74}.

Analogous to the LTS-Godunov scheme, a Large Time Step generalization of the Roe scheme was proposed by LeVeque~\cite{lev84}.
\begin{defn}
Consider the LTS-Godunov scheme \eqref{eq:levorig} where $\hat{u}_{j+1/2}(x,t)$ is interpreted as the solution to the initial value problem \eqref{eq:ciriemann} for \eqref{eq:Roelin}, where the matrix $\hat{A}$ satisfies the conditions of Roe~\cite{roe81}. Such a scheme will be denoted as a {\bf LTS-Roe} scheme.
\end{defn}
\begin{prop}\label{prop:Roesys}
  Given a Roe matrix
\begin{equation}\label{eq:RoeEig}
  \hat{A}_{j+1/2}=\left(\hat{R}{\hat{\Lambda}}\hat{R}^{-1}\right)_{j+1/2}\quad\forall j,
\end{equation}
where $\hat{\Lambda}$ is the diagonal matrix of eigenvalues, the LTS-Roe scheme for systems can be written in the flux-difference splitting form \eqref{eq:fdsform} with coefficients
\begin{equation}
  \mathcal{A}^{i\pm}_{j+1/2}=\hat{A}_{j+1/2}^{i\pm}=\left(\hat{R}\hat{\Lambda}^{i\pm}\hat{R}^{-1}\right)_{j+1/2},
\end{equation}
where for each eigenvalue we define
\begin{subequations}
\begin{gather}
  \hat{\lambda}^{i+}=\max\left(0,\min\left(\hat{\lambda}-i\frac{\Delta x}{\Delta t},\frac{\Delta x}{\Delta t}\right)\right),\\
  \hat{\lambda}^{i-}=\min\left(0,\max\left(\hat{\lambda}+i\frac{\Delta x}{\Delta t},-\frac{\Delta x}{\Delta t}\right)\right).
\end{gather}
\end{subequations}
\end{prop}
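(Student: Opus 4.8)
The plan is to reduce the system to a family of decoupled scalar linear advection problems through the Roe eigen-decomposition, compute the scalar fluctuation coefficients once and for all, and then reassemble them by superposition. The crucial observation is that, by the definition of the LTS-Roe scheme, the interface field $\hat u_{j+1/2}(x,t)$ in LeVeque's formula \eqref{eq:levorig} is the \emph{exact} solution of the constant-coefficient linear problem \eqref{eq:Roelin}. For such a problem the Riemann solution is a pure superposition of non-interacting characteristic waves, so LeVeque's prescription of ``treating each wave interaction as linear'' is automatically exact and introduces no approximation. Consequently the whole update \eqref{eq:levorig}, being built from these exact linear Riemann fans followed by the linear cell-averaging projection, is itself linear in the data, which is exactly what makes the decoupling go through.

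First I would treat the scalar case $N=1$ with a constant linear flux $f(u)=\hat\lambda u$. Here the Godunov Riemann solution is exact and coincides with the Roe solution, so the scalar LTS-Roe coefficients are obtained directly from the LTS-Godunov expressions already derived. Evaluating the viscosity coefficients of Proposition~\ref{prop:Qgod} at $f(u)=\hat\lambda u$ gives, with $C=\tfrac{\Delta t}{\Delta x}\hat\lambda$,
\[
  Q^0=|C|,\qquad Q^{i+}=\max(0,-C-i),\qquad Q^{i-}=\max(0,C-i),
\]
which are precisely the lower bounds of Lemma~\ref{lem:roelim}; this both confirms that scalar LTS-Roe is the least diffusive TVD scheme and supplies the explicit values. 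I would then push these through the transformation \eqref{eq:Q2A} and verify the claimed identity $\mathcal{A}^{i\pm}=\hat\lambda^{i\pm}$ by splitting each coefficient into the three regimes of the clipping function $\max(0,\min(\,\cdot\,,\tfrac{\Delta x}{\Delta t}))$: the wave not reaching the relevant cell (coefficient $0$), landing partially in it (coefficient $\hat\lambda-i\tfrac{\Delta x}{\Delta t}$), or overshooting it (coefficient $\tfrac{\Delta x}{\Delta t}$). The minus coefficients follow from the $\hat\lambda\mapsto-\hat\lambda$ symmetry that swaps $Q^{i+}\leftrightarrow Q^{i-}$.

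Next I would pass to systems. At a fixed interface $j+1/2$ I would decompose the jump $\Delta_{j+1/2}=\hat R_{j+1/2}\,\alpha$ into characteristic components $\alpha_m\hat r_m$; under \eqref{eq:Roelin} each component advects independently at its eigenvalue speed $\hat\lambda_m$. Because the projection in \eqref{eq:levorig} is linear and the characteristic waves do not interact, the contribution of interface $j+1/2$ to a neighbouring cell update is the superposition over $m$ of the scalar contributions computed above, each weighted by the scalar coefficient $\hat\lambda_m^{i\pm}$. Collecting these contributions into the flux-difference splitting form \eqref{eq:fdsform} then yields
\[
  \mathcal{A}^{i\pm}_{j+1/2}\Delta_{j+1/2}=\sum_m \hat\lambda_m^{i\pm}\,\alpha_m\,\hat r_m=\bigl(\hat R\hat\Lambda^{i\pm}\hat R^{-1}\bigr)_{j+1/2}\Delta_{j+1/2},
\]
which is the asserted formula since $\alpha=\hat R^{-1}\Delta_{j+1/2}$.

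I expect the main obstacle to be the rigorous bookkeeping of the decoupling rather than any hard estimate: one must argue that the single global superposition \eqref{eq:levorig} really separates into $N$ independent scalar problems \emph{per interface}, and that the distinct eigenbases $\hat R_{j+1/2}$ at different interfaces do not interfere when the result is matched against \eqref{eq:fdsform}. This is where the \emph{locality} of the scheme is essential: each coefficient $\mathcal{A}^{i\pm}_{j+1/2}$ is built solely from the waves emanating from interface $j+1/2$ and depends only on $(U_j,U_{j+1})$, so the per-interface eigen-decomposition may be carried out in isolation. Keeping the relative interface index $i$ aligned with the cell index in the conventions of \eqref{eq:fdsform} is the only genuinely delicate point.
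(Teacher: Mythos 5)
Your argument is correct, but it runs in the opposite direction from the paper's. The paper proves the system result directly: it writes the exact solution of the linearized Riemann fan in matrix form, $\hat{u}_{j+1/2}(\zeta)=U_j+\hat{R}\hat{H}(\zeta I-\hat{\Lambda})\hat{R}^{-1}\Delta_{j+1/2}$ with $\hat{H}$ the componentwise Heaviside function, evaluates $\int_{(i-1)\Delta x/\Delta t}^{i\Delta x/\Delta t}\hat{u}_{j+1/2-i}(\zeta)\,\dif\zeta$ in two cases ($i\leq 0$ and $i\geq 1$), and inserts the result into \eqref{eq:withzeta}; the clipped eigenvalues $\hat{\lambda}^{i\pm}$ appear automatically as the values of these strip integrals, and the scalar statement (Lemma~\ref{lem:scalarRoe}) is then deduced \emph{afterwards} as a corollary. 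You instead do the scalar linear case first --- by specializing Proposition~\ref{prop:Qgod} to $f(u)=\hat{\lambda}u$, recovering $Q^0=|C|$, $Q^{i\pm}=\max(0,\mp C-i)$, and pushing these through \eqref{eq:Q2A} with a three-regime case check --- and then assemble the system result by eigen-superposition, using the linearity of the update \eqref{eq:levorig} and the non-interaction of characteristic waves within each linearized fan. Both proofs rest on the same two facts (exactness of the linear Riemann solution; linearity of LeVeque's projection), and your superposition over characteristic components is exactly the paper's Heaviside matrix computation written fieldwise, so the mathematical content coincides; incidentally, your entry point of ``apply LTS-Godunov to the linearized flux'' is precisely the device the paper itself uses later, in the proof of Proposition~\ref{prop:godroe}. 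What the paper's route buys is compactness: the matrix formula disposes of all the index bookkeeping in one two-case integral, whereas your route defers exactly that step --- the strip integration per relative index $i$ and the telescoping of the $\frac{\Delta x}{\Delta t}U$ terms against $\sum_i U_i^n$ when matching \eqref{eq:fdsform} --- which you correctly flag as the delicate point but do not execute. What your route buys is insight: it makes explicit that scalar LTS-Roe inherits its coefficients from LTS-Godunov and attains the lower bounds of Lemma~\ref{lem:roelim}, and it gives the clipped eigenvalues their natural wave interpretation (wave missing, partially covering, or fully crossing the target cell). To make your proof fully rigorous you would need to write out the per-interface matching against \eqref{eq:fdsform} that the paper accomplishes with its displayed two-case integral; as it stands this is an outline of a routine computation rather than a gap in the idea.
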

\begin{proof}
  For linear systems, we have that
\begin{equation}\label{eq:heavisol}
  \hat{u}_{j+1/2}(\zeta)=U_j+\hat{R}\hat{H}(\zeta I-\hat{\Lambda})\hat{R}^{-1}\left(U_{j+1}-U_j\right)=U_{j+1}-\hat{R}\hat{H}(\hat{\Lambda}-\zeta I)\hat{R}^{-1}\left(U_{j+1}-U_j\right),
\end{equation}
where we use the convention that $\hat{H}(\cdot)$ is the diagonal matrix obtained from applying the Heaviside function to each diagonal element of its argument. Then we obtain
\begin{equation}
  \int_{(i-1)\frac{\Delta x}{\Delta t}}^{i\frac{\Delta x}{\Delta t}}\hat{u}_{j+1/2-i}(\zeta)\dif\zeta=\begin{cases} \frac{\Delta x}{\Delta t}U_{j-i}-\hat{A}_{j+1/2-i}^{(-i)-}\left(U_{j+1-i}-U_{j-i}\right) & \text{for}\quad i\leq 0 \\ \frac{\Delta x}{\Delta t}U_{j+1-i}-\hat{A}_{j+1/2-i}^{(i-1)+}\left(U_{j+1-i}-U_{j-i}\right) & \text{for}\quad i\geq 1, \end{cases}
\end{equation}
which inserted into \eqref{eq:withzeta} yields
\begin{equation}
  U_j^{n+1}=U_j^n-\frac{\Delta t}{\Delta x}\sum_{i=0}^{\infty}\left(\hat{A}_{j-1/2-i}^{i+}\Delta_{j-1/2-i}+\hat{A}^{i-}_{j+1/2+i}\Delta_{j+1/2+i}\right).
\end{equation}
\end{proof}

\begin{lem}\label{lem:scalarRoe}
For scalar equations, LTS-Roe is uniquely given as
\begin{subequations}\label{eq:mcARoe}
\begin{gather}
  \mathcal{A}^{i+}=\frac{\Delta x}{\Delta t}\max\left(0,\min\left(C-i,1\right)\right),\\
  \mathcal{A}^{i-}=\frac{\Delta x}{\Delta t}\min\left(0,\max\left(C+i,-1\right)\right).
\end{gather}
\end{subequations}
in the flux-difference splitting formulation and
\begin{subequations}\label{eq:Qroe}
\begin{gather}
  Q^0=|C|, \\
  Q^{i\pm}=\max(0,\mp C-i)
\end{gather}
\end{subequations}
in the viscosity formulation.
\end{lem}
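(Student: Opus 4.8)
The plan is to reduce everything to the scalar specialization of Proposition~\ref{prop:Roesys} and then pass to the viscosity form through the transformation~\eqref{eq:A2Q}. First I would observe that for a scalar flux the Roe condition~\eqref{eq:condroe} forces $\hat{A}_{j+1/2}=(f(U_{j+1})-f(U_j))/(U_{j+1}-U_j)$ whenever $U_{j+1}\neq U_j$, and $\hat{A}_{j+1/2}=f'(U_j)$ in the limit $U_{j+1}\to U_j$; in either case this is exactly $\hat{\lambda}_{j+1/2}=(\Delta x/\Delta t)\,C_{j+1/2}$ by the definition~\eqref{eq:defcou}. Hence the scalar LTS-Roe scheme is uniquely determined, and Proposition~\ref{prop:Roesys} applies with $\hat{R}=1$.

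Next I would substitute $\hat{\lambda}=(\Delta x/\Delta t)C$ into the eigenvalue truncations $\hat{\lambda}^{i\pm}$ of Proposition~\ref{prop:Roesys} and factor $\Delta x/\Delta t$ out of each $\min$/$\max$. This turns $\hat{\lambda}^{i+}=\max(0,\min(\hat{\lambda}-i\Delta x/\Delta t,\Delta x/\Delta t))$ into $(\Delta x/\Delta t)\max(0,\min(C-i,1))$, and analogously for $\hat{\lambda}^{i-}$, which is precisely~\eqref{eq:mcARoe}. Since the resulting $\mathcal{A}^{i\pm}$ depend only on $(U_j,U_{j+1})$ and are bounded by $\Delta x/\Delta t$, the scheme is local in the sense of Definition~\ref{def:local}, so the uniqueness of these coefficients follows from the earlier uniqueness result for local scalar flux-difference splitting schemes.

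For the viscosity coefficients I would apply~\eqref{eq:A2Q}, which writes each $Q$ as an infinite tail sum of the $\mathcal{A}$'s. Evaluating these sums is the computational heart of the argument and the step I expect to need the most care. The clean device is the telescoping identity $\max(0,\min(y,1))=\max(0,y)-\max(0,y-1)$, valid for all real $y$ (verified on the three regimes $y\le 0$, $0\le y\le 1$, $y\ge 1$). Writing $(\Delta t/\Delta x)\mathcal{A}^{p+}=\max(0,\min(C-p,1))=\max(0,C-p)-\max(0,C-p-1)$, and after simplifying the nested $\min$/$\max$, $-(\Delta t/\Delta x)\mathcal{A}^{p-}=\max(0,\min(-C-p,1))=\max(0,-C-p)-\max(0,-C-p-1)$, the tail sums in~\eqref{eq:A2Q} collapse: $Q^{i-}=\sum_{p\ge i}(\Delta t/\Delta x)\mathcal{A}^{p+}=\max(0,C-i)$ and $Q^{i+}=-\sum_{p\ge i}(\Delta t/\Delta x)\mathcal{A}^{p-}=\max(0,-C-i)$, the boundary tail vanishing because $C-P-1<0$ for $P$ large, consistent with the cutoff~\eqref{eq:largei}. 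Summing both contributions from $p=0$ gives $Q^0=\max(0,C)+\max(0,-C)=|C|$, which establishes~\eqref{eq:Qroe}.

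The one subtlety to watch is the sign bookkeeping in~\eqref{eq:A2Q}, which pairs $Q^{i+}$ with the left-going coefficients $\mathcal{A}^{p-}$ and $Q^{i-}$ with the right-going $\mathcal{A}^{p+}$; this is exactly what produces the combined formula $Q^{i\pm}=\max(0,\mp C-i)$. Once the telescoping identity is in hand, no case split on the sign of $C$ is required, since both regimes are absorbed into the outer $\max(0,\cdot)$.
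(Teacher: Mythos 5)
Your proof is correct and takes essentially the same route as the paper's: identify the unique scalar Roe matrix $\hat{A}_{j+1/2}=\frac{\Delta x}{\Delta t}C_{j+1/2}$, specialize Proposition~\ref{prop:Roesys} to obtain \eqref{eq:mcARoe}, then pass to \eqref{eq:Qroe} via the transformation \eqref{eq:A2Q}. The only difference is that the paper leaves the tail-sum evaluation implicit, while you make it explicit with the (correct) telescoping identity $\max(0,\min(y,1))=\max(0,y)-\max(0,y-1)$.
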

\begin{proof}
For scalar equations, the unique Roe matrix is
\begin{equation}
  \hat{A}_{j+1/2}=\frac{\Delta x}{\Delta t}C_{j+1/2}.  
\end{equation}
We then obtain \eqref{eq:Qroe} from the transformation \eqref{eq:A2Q} on \eqref{eq:mcARoe}.
\end{proof}
\begin{prop}
The local $(2k+1)$-point TVD scheme with the smallest numerical viscosity~\eqref{eq:defD} is the LTS-Roe scheme.
\end{prop}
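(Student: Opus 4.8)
The plan is to run the exact dual of the argument already given for the LTS-LxF scheme, substituting the lower bounds of Lemma~\ref{lem:roelim} for the upper bounds of Lemma~\ref{lem:LFineq}. The structural fact that makes this work was recorded immediately after \eqref{eq:defD}: the total numerical diffusion $D(u)$ is a monotonically increasing function of each partial viscosity coefficient $\bar{Q}^0$ and $\bar{Q}^{i\pm}$. Consequently, minimising $D(u)$ over the admissible TVD class reduces to minimising each coefficient separately, and if the resulting pointwise-minimal coefficients are simultaneously realised by an \emph{actual} TVD scheme, that scheme must carry the least total viscosity.

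First I would recall from Lemma~\ref{lem:scalarRoe} that LTS-Roe has coefficients $Q^0=|C|$ and $Q^{i\pm}=\max(0,\mp C-i)$, i.e.\ exactly \eqref{eq:Qroe}. Comparing these termwise with the lower bounds \eqref{eq:R1}--\eqref{eq:R2} of Lemma~\ref{lem:roelim} shows that LTS-Roe attains every bound with equality. Hence for any local $(2k+1)$-point TVD scheme one has, on the diagonal, $\bar{Q}^0\geq|c|$ and $\bar{Q}^{i\pm}\geq\max(0,\mp c-i)$ coefficient by coefficient, and the monotonicity of $D$ then yields $D(u)\geq D_{\mathrm{Roe}}(u)$ immediately.

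To promote this lower bound into the assertion that LTS-Roe is genuinely the minimiser, I would verify that the Roe coefficients are themselves admissible, i.e.\ that LTS-Roe is TVD. The cleanest route is to check the flux-difference-splitting conditions \eqref{eq:TVDcondfds} directly on the closed forms \eqref{eq:mcARoe}: since $\mathcal{A}^{i+}=\frac{\Delta x}{\Delta t}\max(0,\min(C-i,1))$ and $\mathcal{A}^{i-}=\frac{\Delta x}{\Delta t}\min(0,\max(C+i,-1))$ are monotone clamped functions of $C-i$ and $C+i$ respectively, the two monotonicity requirements in \eqref{eq:TVDcondfds} hold automatically, while the remaining inequality follows from a short case split on the sign and magnitude of $C$ under \eqref{eq:genTVD}. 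Equivalently one could verify the viscosity-form inequalities \eqref{eq:Qa}--\eqref{eq:Qri} on \eqref{eq:Qroe}, exactly paralleling the opening line of the LTS-LxF proof.

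I expect the only real bookkeeping to lie in this admissibility check, specifically the case analysis around the kink where $\max(0,\mp C-i)$ switches between its two branches; the minimisation itself is a one-line consequence of the monotonicity of $D$ together with Lemma~\ref{lem:roelim}. The conceptual content is simply that decreasing any coefficient below its Roe value would, by Lemma~\ref{lem:roelim}, destroy the TVD property, which is the precise mirror of the LTS-LxF conclusion drawn from Lemma~\ref{lem:LFineq}.
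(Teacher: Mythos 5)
Your proposal is correct and follows essentially the same route as the paper's own proof: verify that the Roe coefficients \eqref{eq:Qroe} unconditionally satisfy the TVD conditions (and reduce to a $(2k+1)$-point scheme under \eqref{eq:genTVD}), then invoke the lower bounds of Lemma~\ref{lem:roelim} together with the monotonicity of $D$ in each coefficient to conclude minimality. Your explicit admissibility check via the flux-difference-splitting conditions \eqref{eq:TVDcondfds} on \eqref{eq:mcARoe} is merely a more detailed version of the paper's one-line ``observe,'' not a different argument.
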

\begin{proof}
  Observe that the coefficients \eqref{eq:Qroe} unconditionally satisfy the TVD conditions \eqref{eq:TVDcond} and that LTS-Roe reduces to a $(2k+1)$-point scheme under the CFL condition \eqref{eq:genTVD}. To decrease $D(u)$ in \eqref{eq:defD}, we would need to decrease at least one of the $Q$ of \eqref{eq:LFQ}. According to Lemma~\ref{lem:roelim}, this is impossible without violating the TVD property.
\end{proof}

The following summary of these observations is a main result of our paper.
\begin{theorem}\label{thm:theoQ}
  Let $\mathrm{S}$ be some local $(2k+1)$ TVD scheme, and let $Q_{\mathrm{S}},Q_{\mathrm{Roe}}$ and $Q_{\mathrm{LxF}}$ be the numerical viscosity coefficients of $\mathrm{S}$, LTS-Roe and the $(2k+1)$-point LTS-LxF scheme respectively. Then, unless $\mathrm{S}$ is precisely the LTS-LxF scheme, we have the sharp inequalities
\begin{equation}
  Q_{\mathrm{Roe}}^\alpha\leq Q_{\mathrm{S}}^\alpha< Q_{\mathrm{LxF}}^\alpha\quad\forall\alpha=0,i\pm.
\end{equation}
\end{theorem}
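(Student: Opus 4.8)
The plan is to recognize that this theorem is essentially a packaging of the four preceding lemmas, so the real work is to assemble them correctly and then handle the strictness alternative with care. I would argue at a fixed cell interface, i.e.\ for an arbitrary pair $(U_{\mathrm{L}},U_{\mathrm{R}})$ with its associated signed Courant number $C$ defined by \eqref{eq:defcou}. The index $\alpha$ ranges over $0$ and $i\pm$ with $i=1,\ldots,k-1$, since for $i\geq k$ all three schemes have vanishing coefficients by \eqref{eq:zeroconv} and nothing needs to be said there.

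First I would establish the lower bound. Lemma~\ref{lem:scalarRoe} gives the LTS-Roe coefficients explicitly as $Q^0_{\mathrm{Roe}}=|C|$ and $Q^{i\pm}_{\mathrm{Roe}}=\max(0,\mp C-i)$, and these are exactly the right-hand sides of \eqref{eq:R1} and \eqref{eq:R2} in Lemma~\ref{lem:roelim}, which every local $(2k+1)$-point TVD scheme must satisfy. Hence $Q^\alpha_{\mathrm{Roe}}\leq Q^\alpha_{\mathrm{S}}$ for every $\alpha$, and the bound is sharp because LTS-Roe is itself a local TVD scheme attaining it. Symmetrically, Lemma~\ref{lem:QLxF} gives $Q^0_{\mathrm{LxF}}=k$ and $Q^{i\pm}_{\mathrm{LxF}}=\frac{k-i}{2k}(\mp C+k)$, which are precisely the right-hand sides of \eqref{eq:Q0ineq} and \eqref{eq:Qiineq} in Lemma~\ref{lem:LFineq}; this yields the weak upper bound $Q^\alpha_{\mathrm{S}}\leq Q^\alpha_{\mathrm{LxF}}$ for every $\alpha$, again sharply.

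Finally I would upgrade the upper bound to a strict inequality using the dichotomy clause of Lemma~\ref{lem:LFineq}: the bounds \eqref{eq:Q0ineq}--\eqref{eq:Qiineq} hold either all as equalities or all as strict inequalities. The equality alternative forces $Q^0_{\mathrm{S}}=k$ and $Q^{i\pm}_{\mathrm{S}}=\frac{k-i}{2k}(\mp C+k)$, i.e.\ $Q_{\mathrm{S}}$ coincides with $Q_{\mathrm{LxF}}$ at this state, so that $\mathrm{S}$ is the LTS-LxF scheme there. Therefore, whenever $\mathrm{S}$ is not the LTS-LxF scheme, the equality branch is excluded and all bounds are strict, giving $Q^\alpha_{\mathrm{S}}<Q^\alpha_{\mathrm{LxF}}$ for every $\alpha$, which combined with the Roe lower bound completes the chain.

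The only step beyond bookkeeping — and the point I expect to require genuine care — is the strictness argument. I must verify that the ``all equalities'' alternative of Lemma~\ref{lem:LFineq} is \emph{exactly} the condition $Q_{\mathrm{S}}=Q_{\mathrm{LxF}}$, and flag the degenerate boundary case $|C|=k$: there the CFL constraint \eqref{eq:genTVD} together with \eqref{eq:R1} forces $Q^0_{\mathrm{S}}=k=|C|$, collapsing the Roe and LxF bounds onto each other, so that $\mathrm{S}$ is necessarily LTS-LxF and the strict inequality is correctly excluded. Away from this degenerate state, i.e.\ for $|C|<k$, the two bounds are genuinely separated and the dichotomy delivers strictness. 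Everything else is a direct citation of Lemmas~\ref{lem:scalarRoe}, \ref{lem:roelim}, \ref{lem:QLxF} and \ref{lem:LFineq}.
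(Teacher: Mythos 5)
Your proposal is correct and follows essentially the same route as the paper, whose proof is a one-line citation of exactly the four lemmas you assemble (Lemmas~\ref{lem:scalarRoe} and \ref{lem:roelim} for the sharp lower bound, Lemmas~\ref{lem:QLxF} and \ref{lem:LFineq} with its all-or-nothing dichotomy for the sharp strict upper bound). Your explicit handling of the degenerate state $|C|=k$, where the Roe and LxF coefficients collapse onto each other, is a point the paper only records in the remark following the theorem, so flagging it is appropriate rather than a deviation.
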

\begin{proof}
  The result follows from Lemmas \ref{lem:LFineq}, \ref{lem:roelim}, \ref{lem:QLxF} and \ref{lem:scalarRoe}.
\end{proof}
In particular, this implies that for a cell interface that satisfies
\begin{equation}
  \left|C_{j+1/2}\right|=k,  
\end{equation}
we have that
\begin{equation}
  Q^{\alpha}_{\mathrm{Roe}}=Q^{\alpha}_{\mathrm{LxF}}\quad\forall\alpha=0,i\pm.  
\end{equation}

\subsection{LTS-Roe vs LTS-Godunov}

For scalar equations, the classical Godunov and Roe schemes always coincide except in the presence of {\em transonic rarefactions}, i.e.\ when the Courant number \eqref{eq:contcou} changes sign across the cell interface~\cite{lev02}. In this section, we will investigate how this relationship extends to Large Time Steps. We start with a useful reformulation of the LTS-Roe scheme.
\begin{prop}\label{prop:godroe}
 For scalar equations, the numerical flux of the LTS-Roe scheme can be written as follows:
\begin{subequations}\label{eq:godroe}
\begin{multline}
  F_{j+1/2}=\hat{\mathscr{M}}_{j+1/2}\left(f(u)\right)-\sum_{i=1}^{\infty}\left[\left(f-i\frac{\Delta x}{\Delta t}u\right)_{j+1-i}-\hat{\mathscr{M}}_{j+1/2-i}\left(f(u)-i\frac{\Delta x}{\Delta t}u\right)\right] \\
-\sum_{i=1}^{\infty}\left[\left(f+i\frac{\Delta x}{\Delta t}u\right)_{j+i}-\hat{\mathscr{M}}_{j+1/2+i}\left(f(u)+i\frac{\Delta x}{\Delta t}u\right)\right],
\end{multline}
where the function $\mathscr{M}$ is defined as 
\begin{equation}
  \hat{\mathscr{M}}_{j+1/2}(w(u))=\begin{cases} \min\limits_{u\in\{U_j,U_{j+1}\}} w(u) & \text{if}\quad U_j<U_{j+1} \\ \max\limits_{u\in\{U_j,U_{j+1}\}} w(u) & \text{if}\quad U_j\geq U_{j+1} \end{cases}.
\end{equation} 
\end{subequations}
\end{prop}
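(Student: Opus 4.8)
The plan is to run the LTS-Godunov derivation verbatim, observing that passing from LTS-Godunov to LTS-Roe amounts to nothing more than replacing the exact entropy Riemann solution $\hat{u}_{j+1/2}$ by the Roe-linearized solution of \eqref{eq:Roelin}--\eqref{eq:ciriemann}. Since the closed-form flux \eqref{eq:godflux} was obtained from the Osher representation \eqref{eq:oshu} purely through an integral identity feeding \eqref{eq:withzeta}, followed by formal telescoping, it suffices to establish the analogue of \eqref{eq:oshu} in which the interval minimizer/maximizer $\mathscr{M}$ is replaced by its two-point counterpart $\hat{\mathscr{M}}$. Everything downstream is insensitive to which of $\mathscr{M}$ or $\hat{\mathscr{M}}$ appears.

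The crux is therefore the identity
\begin{equation*}
  \hat{u}_{j+1/2}(\zeta)=-\frac{\dif}{\dif\zeta}\hat{\mathscr{M}}_{j+1/2}\bigl(f(u)-\zeta u\bigr),
\end{equation*}
where $\hat{u}_{j+1/2}$ now denotes the solution of the Roe-linearized problem. To verify it in the case $U_j<U_{j+1}$, I would write $\hat{\mathscr{M}}_{j+1/2}(f(u)-\zeta u)=\min\bigl(g_j(\zeta),g_{j+1}(\zeta)\bigr)$ with the affine functions $g_m(\zeta)=f(U_m)-\zeta U_m$. These two lines have slopes $-U_j>-U_{j+1}$ and cross exactly at $\zeta^\ast=(f(U_{j+1})-f(U_j))/(U_{j+1}-U_j)=\frac{\Delta x}{\Delta t}C_{j+1/2}$, the Roe speed. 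Differentiating the lower envelope then yields $-\dif/\dif\zeta$ of the minimum equal to $U_j$ for $\zeta<\zeta^\ast$ and $U_{j+1}$ for $\zeta>\zeta^\ast$, which is precisely the single jump travelling at the Roe speed that solves \eqref{eq:Roelin}; the case $U_j\geq U_{j+1}$ is identical with $\max$ in place of $\min$.

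With this identity in hand, integrating over $\zeta\in[(i-1)\frac{\Delta x}{\Delta t},i\frac{\Delta x}{\Delta t}]$ gives
\begin{equation*}
  \int_{(i-1)\frac{\Delta x}{\Delta t}}^{i\frac{\Delta x}{\Delta t}}\hat{u}_{j+1/2-i}(\zeta)\dif\zeta=\hat{\mathscr{M}}_{j+1/2-i}\Bigl(f(u)-(i-1)\tfrac{\Delta x}{\Delta t}u\Bigr)-\hat{\mathscr{M}}_{j+1/2-i}\Bigl(f(u)-i\tfrac{\Delta x}{\Delta t}u\Bigr),
\end{equation*}
the exact $\hat{\mathscr{M}}$-analogue of the identity used in the closed-form lemma. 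Substituting into \eqref{eq:withzeta} and repeating the same rearrangements that produced \eqref{eq:godflux} from \eqref{eq:fdsform} then delivers \eqref{eq:godroe}. As an independent check I would confirm that the right-hand side of \eqref{eq:godroe} is consistent in the sense of \eqref{eq:consistent} (both branches of $\hat{\mathscr{M}}$ collapse to $f(U)$ when $U_j=U_{j+1}$) and that its flux difference $F_{j+1/2}-F_{j-1/2}$ reproduces the LTS-Roe fluctuations \eqref{eq:mcARoe} of Lemma~\ref{lem:scalarRoe}, exactly mirroring the proof of the LTS-Godunov flux. The only genuine subtlety is the non-differentiable kink of $\hat{\mathscr{M}}$ at $\zeta=\zeta^\ast$, but this is a single point and does not affect the integral; the remaining telescoping is purely formal and identical to the Godunov case.
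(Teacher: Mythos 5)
Your proposal is correct and takes essentially the same approach as the paper's first proof: you apply the LTS-Godunov derivation to the Roe-linearized flux, and your envelope identity $\hat{u}_{j+1/2}(\zeta)=-\frac{\dif}{\dif\zeta}\hat{\mathscr{M}}_{j+1/2}\left(f(u)-\zeta u\right)$ is exactly the Osher representation \eqref{eq:oshu} specialized to an affine flux, whose extrema sit at the endpoints $\{U_j,U_{j+1}\}$ with the kink at the Roe speed. Your concluding cross-check against \eqref{eq:mcARoe} and \eqref{eq:Qroe} is precisely the paper's stated alternative proof.
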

\begin{proof}
This can be proved in two ways. First, simply apply the LTS-Godunov scheme directly to the linearized flux function $\hat{f}_{j+1/2}(u)=\hat{A}_{j+1/2}u$, noting that the extrema of a linear function must reside on the boundaries. Alternatively, verify that \eqref{eq:godroe} is equivalent to \eqref{eq:Qroe}.
\end{proof}
\begin{prop}
  The numerical viscosity coefficients of the LTS-Roe and LTS-Godunov scheme satisfy
\begin{equation}
  Q_{\mathrm{God}}^{\alpha}\geq Q_{\mathrm{Roe}}^{\alpha}\quad\forall \alpha=0,i\pm,
\end{equation}
where equality holds except possibly when the Courant number \eqref{eq:contcou} acquires an integer value across the cell interface. More precisely, for all $i\geq 0$ we have
\begin{equation}\label{eq:godroeeq}
  Q_{\mathrm{God}}^{i\pm}(U_{j},U_{j+1})=Q_{\mathrm{Roe}}^{i\pm}(U_{j},U_{j+1})
\end{equation}
as long as there is no $u\in\mathcal{R}_{j+1/2}$ for which
\begin{equation}\label{eq:gentrans}
  c(u)=\mp i,
\end{equation}
where $c$ is defined by \eqref{eq:contcou}.
\end{prop}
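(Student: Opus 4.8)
The plan is to exploit that the LTS-Roe flux \eqref{eq:godroe} is \emph{term-by-term identical} to the LTS-Godunov flux \eqref{eq:godflux}, the sole difference being that the interval operator $\mathscr{M}_{j+1/2}$ is everywhere replaced by the two-point operator $\hat{\mathscr{M}}_{j+1/2}$. First I would establish that the LTS-Roe viscosity coefficients are correspondingly obtained from \eqref{eq:Qgod} by replacing every $\mathscr{M}$ with $\hat{\mathscr{M}}$: substituting these $\hat{\mathscr{M}}$-analogues into the viscosity formulation \eqref{eq:viscform} reproduces \eqref{eq:godroe} by exactly the term-by-term algebra used in Proposition~\ref{prop:Qgod} (the manipulation is insensitive to whether the inner extremum runs over $\mathcal{R}_{j+1/2}$ or merely over its endpoints), and the viscosity coefficients are uniquely determined by the scheme. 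If preferred, these $\hat{\mathscr{M}}$-forms can instead be checked directly against the closed expressions \eqref{eq:Qroe} of Lemma~\ref{lem:scalarRoe}. With this in hand, $Q_{\mathrm{God}}^\alpha$ and $Q_{\mathrm{Roe}}^\alpha$ differ only through $\mathscr{M}(w)-\hat{\mathscr{M}}(w)$ for the relevant auxiliary function $w$.

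For the inequality, I would compare $\mathscr{M}$ and $\hat{\mathscr{M}}$ directly. When $U_j<U_{j+1}$ both operators take a minimum, and the minimum over the interval cannot exceed that over its two endpoints, so $\mathscr{M}(w)\leq\hat{\mathscr{M}}(w)$; when $U_j\geq U_{j+1}$ both take a maximum and the inequality reverses to $\mathscr{M}(w)\geq\hat{\mathscr{M}}(w)$. In each formula of \eqref{eq:Qgod} the $\mathscr{M}$ term enters with a minus sign over the denominator $U_{j+1}-U_j$, whence
\begin{equation*}
  Q_{\mathrm{God}}^\alpha-Q_{\mathrm{Roe}}^\alpha=\frac{(\text{const}>0)\,\bigl(\hat{\mathscr{M}}(w)-\mathscr{M}(w)\bigr)}{U_{j+1}-U_j}.
\end{equation*}
The key point is that the sign of $\hat{\mathscr{M}}(w)-\mathscr{M}(w)$ is precisely matched to the sign of $U_{j+1}-U_j$ (nonnegative numerator with positive denominator when $U_j<U_{j+1}$, nonpositive numerator with negative denominator otherwise), so the ratio is nonnegative in every case, giving $Q_{\mathrm{God}}^\alpha\geq Q_{\mathrm{Roe}}^\alpha$ for all $\alpha=0,i\pm$.

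For the equality \eqref{eq:godroeeq}, since $U_{j+1}-U_j\neq 0$ the display shows equality holds exactly when $\mathscr{M}(w)=\hat{\mathscr{M}}(w)$, i.e.\ when the extremum of $w$ over $\mathcal{R}_{j+1/2}$ is attained at an endpoint; it therefore suffices to prove $w$ monotonic under the stated hypothesis. For $Q^{i\pm}$ the auxiliary function is $w(u)=\frac{\Delta t}{\Delta x}f(u)\pm iu$, with $w'(u)=c(u)\pm i$ by \eqref{eq:contcou}, and for $Q^0$ it is $w=f$ (the case $i=0$), whose derivative has the sign of $c$. The hypothesis that no $u\in\mathcal{R}_{j+1/2}$ satisfies $c(u)=\mp i$ is exactly the statement that $w'$ never vanishes on $\mathcal{R}_{j+1/2}$; since $c$ is continuous, the intermediate value theorem forces $w'$ to keep a constant sign, so $w$ is strictly monotonic and attains its extremum at an endpoint. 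Hence $\mathscr{M}(w)=\hat{\mathscr{M}}(w)$ and \eqref{eq:godroeeq} follows.

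The main obstacle is bookkeeping rather than analysis: one must keep the interlocking sign conventions aligned---the $\pm$ labelling of the coefficients, the $\mp i$ in the critical-point condition, and the pairing of the min/max convention of $\mathscr{M},\hat{\mathscr{M}}$ with the sign of $U_{j+1}-U_j$---and verify they remain consistent across the $Q^0$, $Q^{i+}$ and $Q^{i-}$ cases. The only genuinely delicate step is making the substitution $\mathscr{M}\mapsto\hat{\mathscr{M}}$ of the first paragraph rigorous through the uniqueness of the coefficients rather than by mere appeal to the parallel structure.
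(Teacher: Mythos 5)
Your proposal is correct and takes essentially the same route as the paper: the paper likewise writes $Q_{\mathrm{God}}^\alpha-Q_{\mathrm{Roe}}^\alpha$ as a sign-normalized (via a factor $\sigma=\pm1$, playing the role of your case split on the sign of $U_{j+1}-U_j$) difference between the minimum of $\frac{\Delta x}{\Delta t}f(u)\pm iu$ over the endpoint set $\{U_j,U_{j+1}\}$ and over the full interval $\mathcal{R}_{j+1/2}$, concluding nonnegativity from the subset relation and equality unless that expression has an interior extremum, where its derivative must vanish, giving \eqref{eq:gentrans}. Your identification of the Roe coefficients as the $\hat{\mathscr{M}}$-analogues of \eqref{eq:Qgod} is exactly the content of Proposition~\ref{prop:godroe} (verifiable against \eqref{eq:Qroe}, as you note), and your monotonicity/intermediate-value argument is simply the contrapositive of the paper's interior-extremum step.
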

\begin{proof}
  From Propositions~\ref{prop:Qgod} and \ref{prop:godroe} we obtain 
\begin{subequations}
\begin{align}
  Q^{0}_{\mathrm{God}}-Q^{0}_{\mathrm{Roe}}&=2\frac{\min\limits_{u\in\{U_j,U_{j+1}\}}\left[\frac{\Delta x}{\Delta t}f(u)\cdot\sigma\right]-\min\limits_{u\in\mathcal{R}_{j+1/2}}\left[\frac{\Delta x}{\Delta t}f(u)\cdot\sigma\right]}{|U_{j+1}-U_j|}, \\
  Q^{i\pm}_{\mathrm{God}}-Q^{i\pm}_{\mathrm{Roe}}&=\frac{\min\limits_{u\in\{U_j,U_{j+1}\}}\left[\left(\frac{\Delta x}{\Delta t}f(u)\pm iu\right)\cdot\sigma\right]-\min\limits_{u\in\mathcal{R}_{j+1/2}}\left[\left(\frac{\Delta x}{\Delta t}f(u)\pm iu\right)\cdot\sigma\right]}{|U_{j+1}-U_j|},
\end{align}
\end{subequations}
where we define
\begin{equation}
 \sigma(U_j,U_{j+1})=\begin{cases} 1 & \mathrm{if}\quad U_{j+1}-U_j \geq 0\\ -1 & \mathrm{otherwise.} \end{cases}  
\end{equation}
For the equality \eqref{eq:godroeeq} to be violated for $Q^{\pm i}$, the expression $\frac{\Delta t}{\Delta x}f(u)\pm iu$ must have an extremum in the interior of $\mathcal{R}_{j+1/2}$. Then its derivative must vanish, which is precisely the condition \eqref{eq:gentrans}.
\end{proof}

\subsubsection{Entropy violations} Observe that the solution \eqref{eq:heavisol} to the linearized Riemann problem \eqref{eq:ciriemann} and \eqref{eq:Roelin} contains only discontinuities. In particular, the Roe solver will propagate any rarefaction wave as an expansion shock, and we are dependent on the numerical diffusion introduced by the projection step~\eqref{eq:proj} to recover the correct entropy solution.

Now \eqref{eq:gentrans} is in fact the condition for no projection error; if 
\begin{equation}\label{eq:badcou}
  C_{j+1/2}=\mp i,
\end{equation}
the discontinuity will simply be translated $i$ grid cells for each time step. This may also be understood in terms of the modified equation \eqref{eq:modQ} for the LTS-Roe scheme, we obtain
\begin{equation}\label{eq:Droe}
  D(u)=\left(\lceil|c|\rceil-|c|\right)\left(1+|c|-\lceil|c|\rceil\right),
\end{equation}
which vanishes whenever \eqref{eq:gentrans} is satisfied.

For the LTS-Roe scheme, entropy violations associated with \eqref{eq:gentrans} have been reported by several authors~\cite{lev82,lev84,lev85,mor12a,qia11,xu14}.
 To remedy this, all these authors follow the approach of LeVeque~\cite{lev85} in splitting each rarefaction wave into a sequence of entropy-violating shocks that will spread out to approximate the true rarefaction wave as time evolves. If the rarefaction wave is thus split at points satisfying
\begin{equation}
  c(u^{i\pm *})=\mp i,
\end{equation}
this approach will in fact precisely recover the true LTS-Godunov scheme.

Based on the above heuristics, we will now propose a simple alternative entropy fix. Entropy violations arise when a Courant number~\eqref{eq:badcou} exactly preserves an expansion shock. We propose to destabilize this situation by always varying the time step;  at each time level we replace the fixed global Courant number $\bar{C}$ as defined in \eqref{eq:globcou} with the modification
\begin{equation}\label{eq:randCFL}
   \hat{C}=\bar{C}+r,  
\end{equation}
where $r$ is sampled from the uniform random distribution $\mathcal{U}(-1/2,1/2)$. This ensures that the critical Courant numbers \eqref{eq:badcou} will change at each time step, and travelling expansion shocks will be smeared by the projection \eqref{eq:proj}. We observe that this fix seems always to work in practice, and examples will be provided in Section~\ref{sec:simulations}.

Note that the fix will not work for the classical $C_{j+1/2}=0$ transonic rarefaction, as this degenerate case becomes independent of the time step. We therefore propose to combine the random time step method with a classical entropy fix, for instance the fix of Harten~\cite{har83}, to be able to handle expansion shocks of arbitrary velocity.
\begin{defn}
The scheme defined by the viscosity coefficients
\begin{subequations}\label{eq:Qroestar}
\begin{gather}
  Q^0=\begin{cases} |C| & \mathrm{if}\quad |C| \geq \delta, \\ (C^2+\delta^2)/(2\delta) & \mathrm{if}\quad|C|\leq\delta \end{cases} \\
  Q^{i\pm}=\max(0,\mp C-i)
\end{gather}
\end{subequations}
for some $0<\delta<1$, subject to time stepping according to \eqref{eq:randCFL}, will be denoted as the {\bf LTS-Roe*} scheme.
\end{defn}

\section{A Family of LTS Schemes}\label{sec:systems}

Similarly to \eqref{eq:Droe}, we can calculate from \eqref{eq:modQ} the total numerical viscosity for LTS-LxF:
\begin{equation}\label{eq:DLxF}
  D(u)=k^2-c^2.
\end{equation}
Hence by \eqref{eq:Droe} and Theorem~\ref{thm:theoQ} we have the sharp inequality for the numerical viscosity of $(2k+1)$-point local TVD schemes:
\begin{equation}\label{eq:sharpD}.
  \left(\lceil|c|\rceil-|c|\right)\left(1+|c|-\lceil|c|\rceil\right)\leq D(u)\leq k^2-c^2.
\end{equation}
Now 3-point schemes \eqref{eq:3visc} are uniquely defined through their numerical viscosity, but as remarked by Tadmor~\cite{tad84}, this does not hold when $k>1$; different $(2k+1)$-point schemes~\eqref{eq:viscform} may give the same $D(u)$ for some given $u$.

We would however be interested in a simple constructive procedure that would give us a local TVD scheme with a target numerical viscosity satisfying~\eqref{eq:sharpD}. In particular, we aim for a parametrization $Q(\beta;U_{\mathrm{L}},U_{\mathrm{R}})$, $\beta\in[0,1]$ with the following properties:
\begin{itemize} 
  \item[P1:] $Q^\alpha(\beta;U_{\mathrm{L}},U_{\mathrm{R}})$ satisfy the TVD conditions \eqref{eq:TVDcond} unconditionally;
  \item[P2:] $D(\beta;u)$ is a strictly monotonically increasing function of $\beta$;
  \item[P3:] $D(0;u)=\left(\lceil|c|\rceil-|c|\right)\left(1+|c|-\lceil|c|\rceil\right)$;
  \item[P4:] $D(1;u)=k^2-c^2$.
\end{itemize}
As any convex combination of TVD fluxes is TVD, this can be easily achieved.
\begin{defn}
The $(2k+1)$-point scheme given by
\begin{equation}
  Q^\alpha(\beta;U_{\mathrm{L}},U_{\mathrm{R}})=\beta Q^{\alpha}_{\mathrm{LxF}}(U_{\mathrm{L}},U_{\mathrm{R}})+(1-\beta)Q^{\alpha}_{\mathrm{Roe}}(U_{\mathrm{L}},U_{\mathrm{R}})
\end{equation}
will be denoted as the LTS-RoeLxF($\beta$) scheme.
\end{defn}
Herein, P1-P4 will continue to hold if $\beta$ is chosen individually at each cell interface at each time step. 

This LTS-RoeLxF($\beta$) method is not unique in satisfying P1-P4, nor do we claim it is the optimal such scheme. Rather, our intent is to concretely demonstrate that the level of numerical diffusion may be controlled in the Large Time Step TVD setting. This may be of particular relevance when we consider the extension to systems of nonlinear equations, as nonlinear interactions between the characteristic fields may induce numerical instabilities~\cite{lev85}.

\subsection{Extension to systems}

As there is no analogue of the TVD property for general nonlinear systems of equations, we will follow the standard approach~\cite{har86,qia12} of applying the scalar framework to a field-by-field decomposition obtained through the Roe linearization.

Consider now the class of scalar local multi-point schemes where the flux-difference splitting coefficients can be expressed as
\begin{equation}\label{eq:scalara}
  \mathcal{A}^{i\pm}=a^{i\pm}(U_{\mathrm{L}},U_{\mathrm{R}})=a^{i\pm}\left(C(U_{\mathrm{L}},U_{\mathrm{R}})\right),
\end{equation}
where $C$ is the signed local cell interface Courant number \eqref{eq:defcou}. This class of schemes naturally arises in the TVD setting, as $C$ is the only cell interface parameter featured in the TVD conditions~\eqref{eq:TVDcond}. In particular, note that both LTS-Roe and LTS-LxF (but not LTS-Godunov) belong to this class of schemes. Then the LTS-Roe scheme for systems, described in Proposition~\ref{prop:Roesys}, may be naturally generalized as follows; given the Roe matrix~\eqref{eq:RoeEig}, construct the flux-difference splitting coefficients as
\begin{equation}\label{eq:Roesplit}
  \mathcal{A}^{i\pm}_{j+1/2}=\hat{A}_{j+1/2}^{i\pm}=\left(\hat{R}\hat{\Lambda}^{i\pm}\hat{R}^{-1}\right)_{j+1/2},
\end{equation}
where for each eigenvalue we define
\begin{equation}
  \hat{\lambda}^{i\pm}=a^{i\pm}(\hat{\lambda}),
\end{equation}
where $a^{\pm i}$ are the functions~\eqref{eq:scalara} defining the scalar version of the scheme.

Herein, we may apply different schemes \eqref{eq:scalara} to the different characteristic families of \eqref{eq:Roesplit}. Note that also for nonlinear systems, applying this procedure componentwise to the Lax-Friedrichs coefficients \eqref{eq:LFQ} reduces the scheme to exactly~\eqref{eq:defLTSLxF}.

\section{Numerical Examples}\label{sec:simulations}

We will now provide some numerical illustrations on the relationship between numerical viscosity and entropy violations.

\subsection{Burgers equation}

We consider the Burgers equation in one space dimension:
\begin{subequations}\label{eq:burgers}
\begin{gather}
  u_t+\left( \frac{1}{2} u^2\right)_x=0, \quad u \in \mathbb{R}, \\
  u(x,0)=u_0(x).
\end{gather}
\end{subequations} 
We use a grid of 800 computational cells covering $x\in[0,1]$, and a Courant number $\bar{C}=5$. Furthermore, the parameter $\delta=0.5$ is used for Harten's entropy fix \eqref{eq:Qroestar}.

\subsubsection{Square pulse}

We consider the square initial data conditions:
\begin{equation}
  u_0(x) = \begin{cases}  1 & \text{if} \quad 0.3 < x < 0.7 \\
                         0 & \text{otherwise.}
          \end{cases}
  \label{eq:square_initial_data}
\end{equation}
The solution after $t=0.2$ is plotted in Figure~\ref{fig:schemes_square} for various schemes.
\begin{figure}
  \begin{center}
\begin{subfigure}[b]{0.47\textwidth}
    \includegraphics[width=1.00\textwidth]{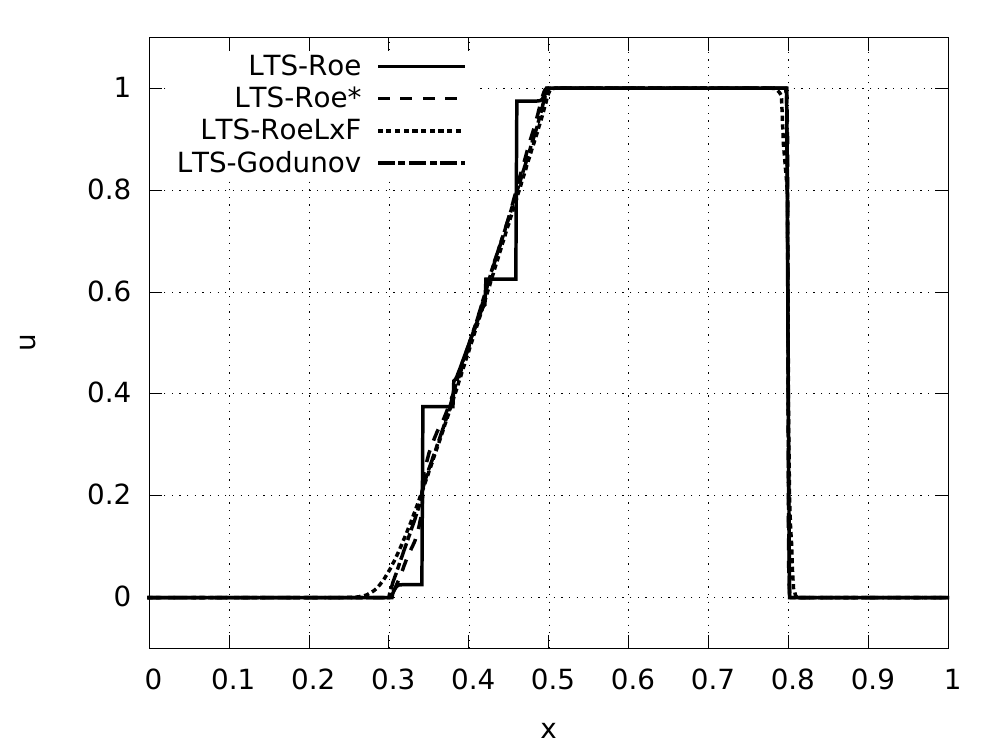}
    \caption{Comparion of various schemes.}
    \label{fig:square_comp}
\end{subfigure}
\begin{subfigure}[b]{0.47\textwidth}
    \includegraphics[width=1.00\textwidth]{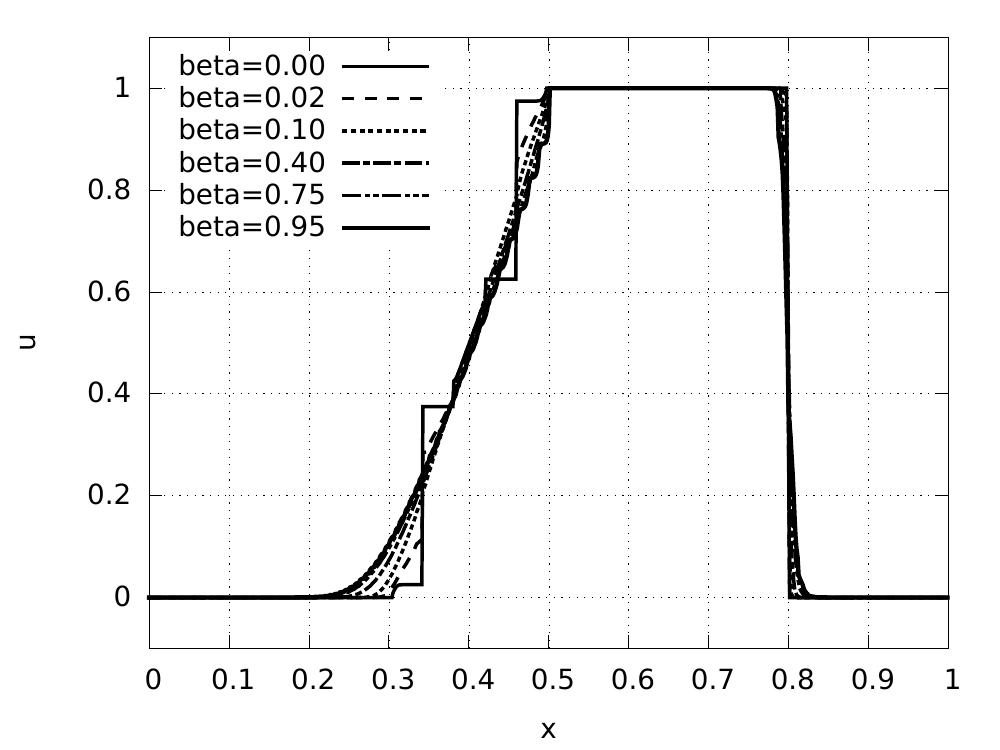}
   \caption{LTS-RoeLxF($\beta$) for different values of $\beta$.}
   \label{fig:square_beta}
\end{subfigure}
\caption{Square pulse initial data for Burgers equation.\label{fig:schemes_square}}
  \end{center}
\end{figure}
In Figure~\ref{fig:square_comp}, $\beta=0.2$ was used for the LTS-RoeLxF scheme. We observe that LTS-Roe splits the rarefaction wave into a number of entropy-violating expansion shocks, whereas the other schemes all produce a satisfactory resolution of the rarefaction wave.

In Figure~\ref{fig:square_beta}, various values for $\beta$ in the LTS-RoeLxF($\beta$) scheme are considered. We observe that increasing $\beta$ leads to the expected increase in numerical diffusion, and recovers the entropy-satisfying rarefaction wave for even small values of $\beta$.

\subsubsection{Transonic rarefaction wave}

We now wish to focus on the classical case of the transonic rarefaction, where entropy violations are expected also for low Courant numbers. We consider the initial conditions
\begin{equation}
  u_0(x) = \begin{cases} -1 & \text{if} \quad 0.25 < x \leq 0.5 \\
                          1 & \text{if} \quad 0.5 < x < 0.75 \\
                         0 & \text{otherwise.}
          \end{cases}
  \label{eq:initial_data}
\end{equation}
Results are plotted in Figure~\ref{fig:transonic}.
\begin{figure}[htbp]
  \begin{center}
\begin{subfigure}[b]{0.47\textwidth}
    \includegraphics[width=1.00\textwidth]{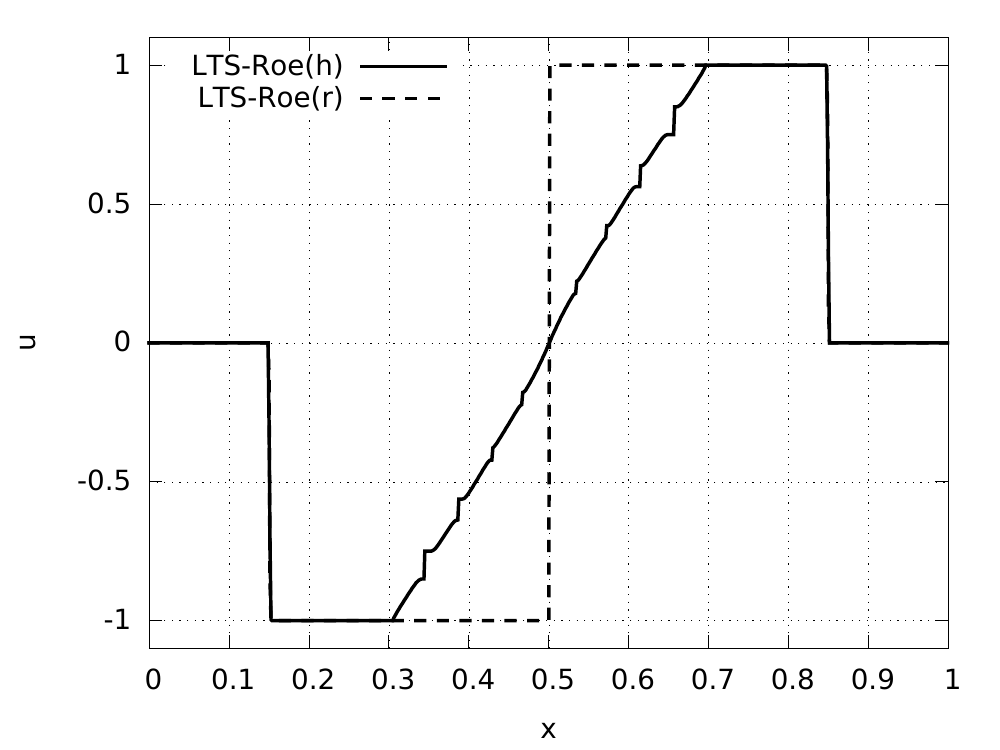}
    \caption{Harten's entropy fix vs random time step.\label{fig:hr}}
\end{subfigure}
\begin{subfigure}[b]{0.47\textwidth}
    \includegraphics[width=1.00\textwidth]{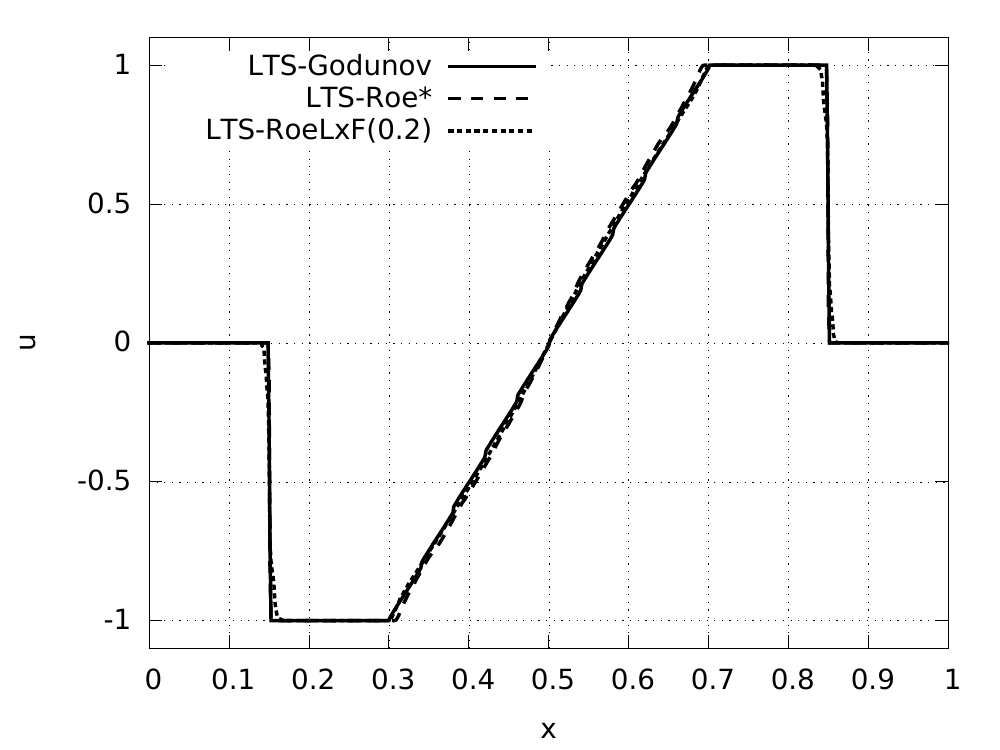}
    \caption{Entropy-satisfying solutions.\label{fig:entsat}}
\end{subfigure}
  \end{center}
  \caption{Transonic rarefaction for Burgers equation.\label{fig:transonic}}
\end{figure}
In Figure~\ref{fig:hr}, we consider the two entropy fixes of LTS-Roe* separately. Applying only the random time step \eqref{eq:randCFL}, denoted as LTS-Roe(r), will not modify the zero Courant number and the transonic rarefaction remains a stationary expansion shock. Applying only Harten's entropy fix \eqref{eq:Qroestar}, we recover a rarefaction interspersed with expansion shocks similar to Figure~\ref{fig:schemes_square}.

On the other hand, from Figure~\ref{fig:entsat} we see that both the full LTS-Roe* scheme and the LTS-RoeLxF(0.2) scheme resolve the rarefaction on par with the LTS-Godunov scheme.

\subsection{Sod shock tube problem}

We consider the classical Sod shock tube problem for the Euler equations of gas dynamics, described in detail in~\cite{qia11,sod78}. In Figure~\ref{fig:sod}, a comparison between LTS-Roe and LTS-Roe* after $t=0.25$ for different Courant numbers is presented.
\begin{figure}
  \begin{center}
\begin{subfigure}[b]{0.47\textwidth}
    \includegraphics[width=1.00\textwidth]{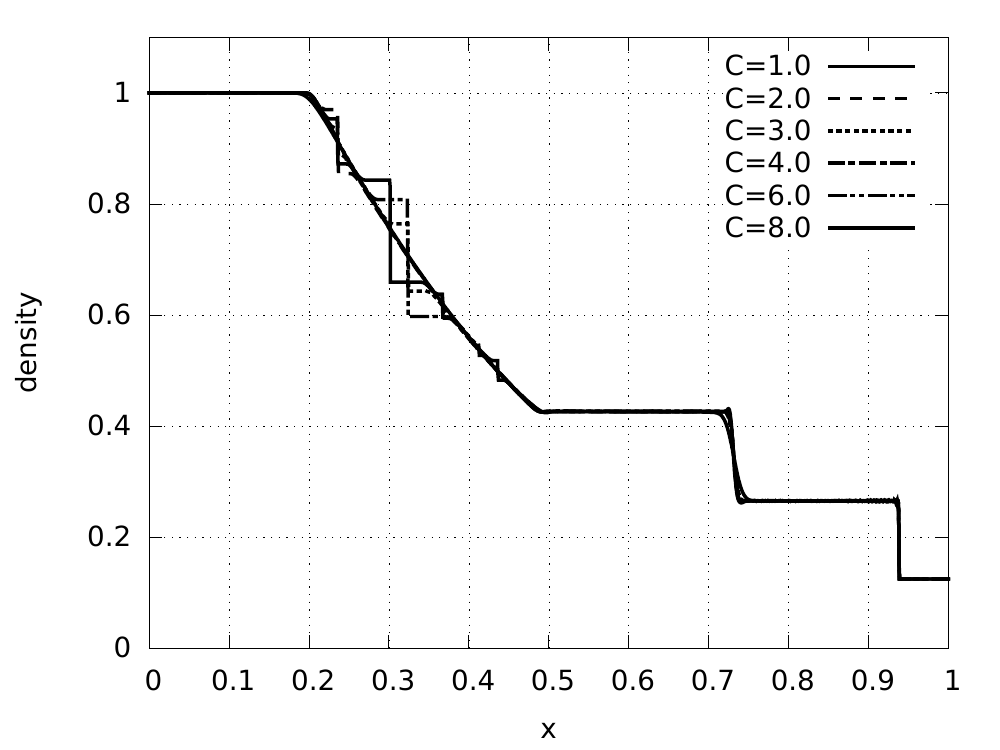}
    \caption{LTS-Roe\label{fig:roesod1}}
\end{subfigure}
\begin{subfigure}[b]{0.47\textwidth}
    \includegraphics[width=1.00\textwidth]{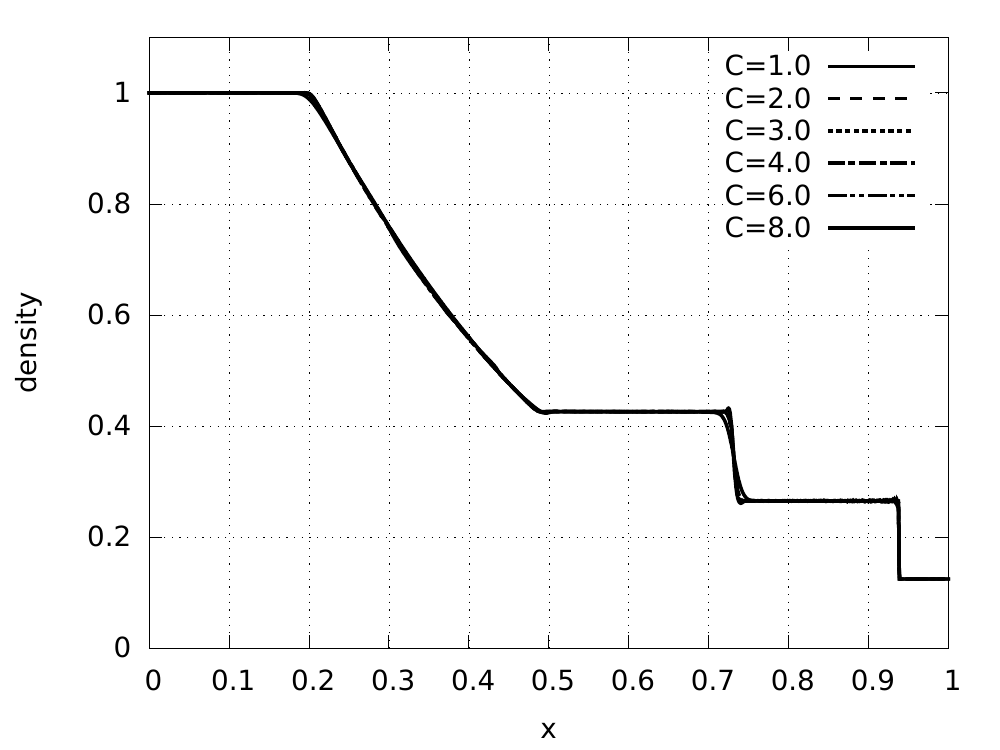} 
    \caption{LTS-Roe*\label{fig:roestarsod1}}
\end{subfigure}
  \end{center}
\caption{Density plot for the Sod shock tube problem, 1800 cells.\label{fig:sod}}
\end{figure}
For LTS-Roe, all Courant numbers of 2 and above here yield entropy-violating expansion shocks with velocities satisfying \eqref{eq:badcou}. The time stepping of the LTS-Roe* scheme once more removes these entropy violations.
\begin{figure}
  \begin{center}
\begin{subfigure}[b]{0.47\textwidth}
    \includegraphics[width=1.00\textwidth]{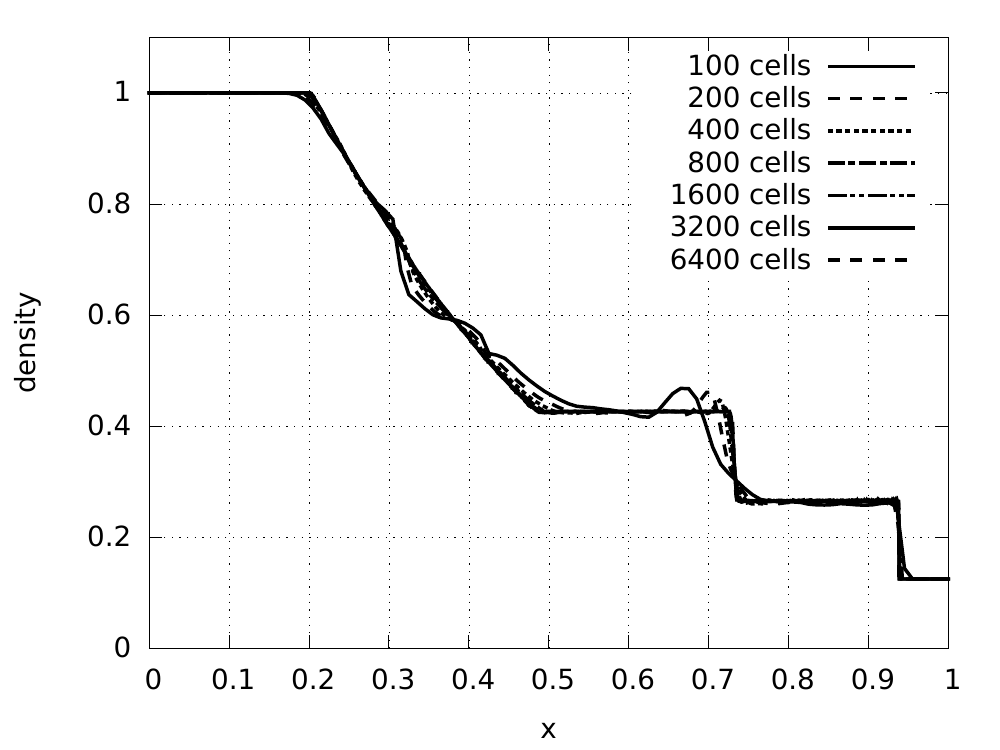}
  \caption{LTS-Roe*\label{fig:roestar}}
\end{subfigure}
\begin{subfigure}[b]{0.47\textwidth}
    \includegraphics[width=1.00\textwidth]{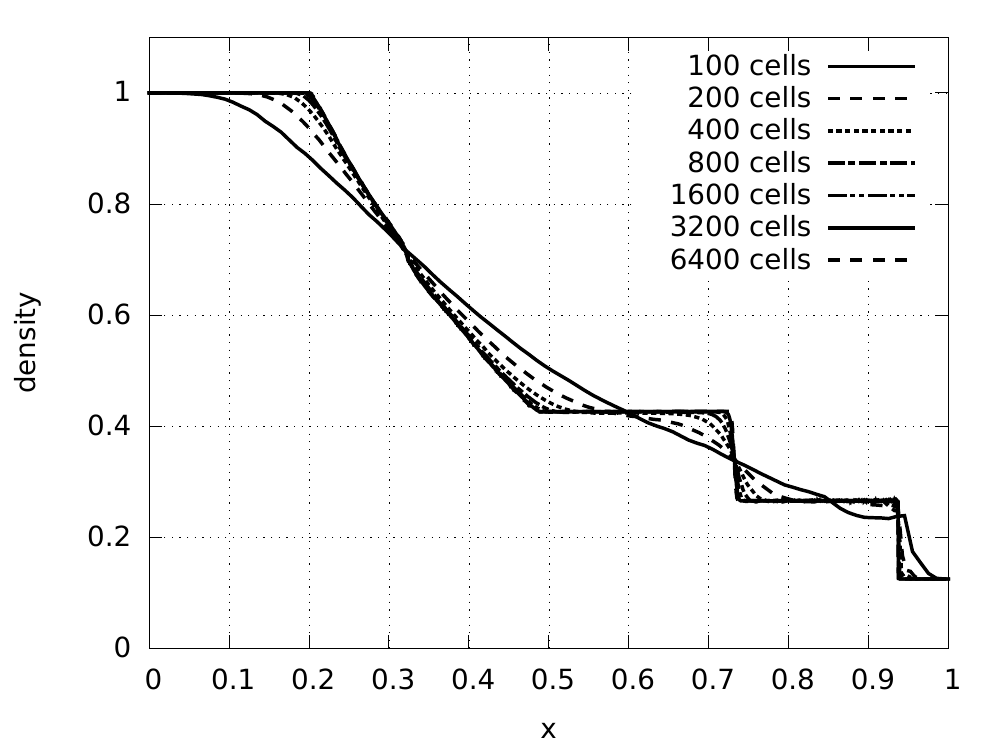} 
    \caption{LTS-RoeLxF, $\beta/\Delta x=30.0$\label{fig:beta}}
\end{subfigure}
  \end{center}
\caption{Density plot for the Sod shock tube problem, C=6.0.\label{fig:sod2}}
\end{figure}

However, Figure~\ref{fig:sod2} displays spurious oscillations associated with LTS-Roe* which reduce with grid refinement. For comparison, the same grid refinement study is performed on the LTS-RoeLxF($\beta$) scheme, were we adapt $\beta$ to the grid size acccording to
\begin{equation}
   \frac{\beta}{\Delta x}=30.0.
\end{equation}
This choice introduces a significant amount of numerical diffusion, eliminating both the spurious oscillations and entropy violations. The quality of the numerical solutions will start to suffer by a further increase of the Courant number.

\section{Summary}\label{sec:summary}

In this paper, we have expanded on classical works on multi-point TVD schemes, obtaining some original precise statements. In particular, by considering the restricted class of {\em local} multi-point schemes we were able to obtain the necessary and sufficient TVD conditions \eqref{eq:TVDcond}. The coefficients of these conditions are directly linked to the numerical viscosity through the modified equation analysis.

A main result of our paper is Proposition~\ref{prop:Qgod}, where we provide closed form expressions for these coefficients for LeVeque's LTS-Godunov scheme. By this, the LTS-Godunov scheme is explicitly placed in the numerical viscosity TVD framework.

A second main result is Theorem~\ref{thm:theoQ}, which extends to Large Time Steps Harten's result that the Lax-Friedrichs and Roe schemes provide the upper and lower limits for the coefficients of numerical viscosity in local TVD schemes.

We have provided some insights on entropy violations associated with the LTS-Roe scheme when travelling expansion shocks are aligned with the grid, illustrated  with numerical examples.

The framework may be naturally extended to nonlinear systems through the Roe linearization. Our hope is that the insights of this paper may facilitate further research on devising explicit schemes that appropriately combine accuracy and robustness under Large Time Steps. Herein, the general approach of Harten~\cite{har86} may be used to achieve high resolution.

\section{Acknowledgments}

The first and second authors were funded in part through the NORDICCS
Centre, performed under the Top-level Research Initiative CO2 Capture 
and Storage program, and Nordic Innovation. The authors acknowledge the 
following partners for their contributions: Statoil, Gassco, Norcem, 
Reykjavik Energy, and the Top-level Research Initiative (Project number 
11029).

The work of the third author was supported by the Research Council of Norway (234126/30) through the SIMCOFLOW project.

We are grateful to our colleagues Sigbj\o{}rn L\o{}land Bore, Stein Tore Johansen, Halvor Lund, Alexandre Morin, Bernhard M\"{u}ller, Marica Pelanti and Marin Prebeg for fruitful discussions.


\end{document}